\documentclass[reqno]{amsart}
\pdfoutput=1
\usepackage{fullpage}
\usepackage{scalerel}
\usepackage{float}
\usepackage{empheq}
\usepackage{wrapfig}
\usepackage{graphicx}
\usepackage{mathtools}
\mathtoolsset{showonlyrefs}
\usepackage{hyperref}
\usepackage{amssymb}
\DeclareMathOperator{\Gr}{Gr}
\DeclareMathOperator{\GL}{GL}
\DeclareMathOperator{\symb}{symb}
\DeclareMathOperator{\SO}{SO}
\DeclareMathOperator*{\Span}{span}
\DeclareMathOperator{\grad}{grad}

\numberwithin{equation}{section}
\usepackage{microtype}
\usepackage[bibstyle=custom,citestyle=alphabetic,block=space]{biblatex}
\bibliography{cd.bib}
\usepackage{verbatim}
\usepackage{breqn}
\usepackage{tikz}
\hyphenpenalty=0
\setlength\bibitemsep{0.25\baselineskip}
\newtheorem{specialtheorem}{Theorem}[section]
\newtheorem{specialcorollary}[specialtheorem]{Corollary}
\newtheorem{theorem}[equation]{Theorem}
\newtheorem{lemma}[equation]{Lemma}
\newtheorem{proposition}[equation]{Proposition}
\newtheorem{corollary}[equation]{Corollary}
\theoremstyle{definition}
\newtheorem{definition}[equation]{Definition}
\newtheorem{remark}[equation]{Remark}
\newcommand{\bb}[1]{\mathbb{#1}}
\newcommand{\restr}[2]{\left. \kern-\nulldelimiterspace #1 \right|_{#2}}
\DeclareMathOperator{\Hol}{Hol}

\DeclareMathOperator{\Ad}{Ad}
\DeclareMathOperator{\Isom}{Isom}

\DeclareMathOperator{\proj}{proj}

\DeclareMathOperator{\id}{id}
\begin{document}

\author{Salman Siddiqi}
\title{Decay of correlations for certain isometric extensions of Anosov flows}
\begin{abstract}
	We establish exponential decay of correlations of all orders for locally $G$-accessible isometric extensions of transitive Anosov flows, under the assumption that the strong stable and strong unstable foliations of the base Anosov flow are $C^1$. This is accomplished by translating accessibility properties of the extension into local non-integrability estimates measured by Dolgopyat's infinitesimal transitivity group, from which we obtain contraction properties for a class of `twisted' symbolic transfer operators.
\end{abstract}

\maketitle

\tableofcontents

\section*{Introduction}

One of the strongest characteristics of chaotic behaviour in dynamical systems is the exponential decay of correlations, or exponential mixing; besides being of intrinsic interest, this is typically accompanied by other strong statistical properties for regular observables. Naturally, there has been substantial interest in understanding and characterizing the dynamical systems with this property.

Hyperbolicity and the joint structure of the strong stable and strong unstable foliations are among the most well-understood mechanisms driving chaotic behaviour in both discrete-time and continuous-time dynamical systems. For continuous-time systems in particular, the extent to which these foliations fail to be integrable is known to be especially important.

Unfortunately, even among Anosov flows, there is no complete characterization of those which are exponentially mixing. There have been many significant advances, however, with perhaps the most notable due to Dolgopyat, who showed in \cite{dolgopyatmixing} that the uniform local non-integrability of the strong stable and unstable foliations leads to exponential mixing for smooth Anosov flows, for a large class of equilibrium measures. We will not attempt to give an account of the considerable progress that has been achieved since then, but we refer the reader to \cite[\S 1]{warbutterley} for an excellent narrative.

Our attention will be restricted to the class of Anosov flows studied in \cite{dolgopyatmixing}, which are known to be exponentially mixing. We will consider compact isometric extensions of these flows, and give criteria for these extensions to be exponentially mixing. We prove the following:

\renewcommand{\thespecialtheorem}{\Alph{specialtheorem}}
\begin{specialtheorem}
	Let $M, N$ and $F$ be closed Riemannian manifolds, where $\pi \colon M \to N$ is a fiber bundle with fibers isometric to $F$. Suppose that $g_t \colon N \to N$ is a smooth, transitive Anosov flow preserving an equilibrium measure $\nu$ with a H\"older potential $\varsigma \colon N \to \bb R$. Moreover, suppose that the strong stable and strong unstable foliations are $C^1$, and that $\nu$ has unstable conditional measures $\nu^u$ that are diametrically regular.

	Let $G$ be a closed, connected, normal subgroup of the isometry group $\Isom(F)$ that acts transitively on $F$, and equip $F$ with the pushforward $\omega$ of the normalized Haar measure on $G$. Let $f_t \colon M \to M$ be a $G$-extension of $g_t$. If $f_t$ is locally $G$-accessible, then it enjoys exponential decay of correlations of all orders for the (locally defined) product measure $\nu \times \omega$.
	\label{maintheorem}
\end{specialtheorem}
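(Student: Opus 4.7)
The plan is to reduce exponential mixing of all orders for $f_t$ to uniform spectral contraction estimates for a family of twisted symbolic transfer operators indexed by the unitary dual $\widehat G$, and then to derive those estimates from local $G$-accessibility. Using the $C^1$ regularity of the strong foliations and the diametric regularity of $\nu^u$, the base flow $g_t$ sits squarely in Dolgopyat's classical setting and admits a Markov coding as a suspension over a subshift with a H\"older roof. Since $\pi \colon M \to N$ is a locally trivial $G$-bundle and $f_t$ is a $G$-extension, over each Markov rectangle the flow $f_t$ writes as a skew product $(x, y) \mapsto (g_t x, \phi_t(x) y)$ for a $G$-valued cocycle $\phi_t$. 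Peter--Weyl on $L^2(F, \omega)$ along irreducible representations $(\rho, V_\rho)$ of $G$ then splits every observable into Fourier components, and the correlation integral splits correspondingly into a sum over $\widehat G$ of pairings of iterates of a twisted transfer operator $L_{s, \rho}$, incorporating the potential $\varsigma$ and twisted by $\rho(\phi)$, with the symbolic Fourier components of the two observables.

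The main technical task is then to establish a bound $\|L_{s, \rho}^n\| \leq C_\rho \theta^n$ for $|s|$ in an appropriate range, uniformly in $\rho$ up to polynomial loss in $C_\rho$. For the trivial representation this reduces to Dolgopyat's estimate for the base flow, which is available under our hypotheses. For nontrivial $\rho$ I would feed local $G$-accessibility into the apparatus of Dolgopyat's infinitesimal transitivity group in the fiber direction: quantitatively, $su$-holonomy loops in $M$ produce group elements in $G$ whose images under $\rho$ witness a nontrivial lower bound on the infinitesimal oscillation of $\rho(\phi)$ along the unstable leaf of the base, evaluated against the temporal distance function. This is the precise point at which the geometric accessibility hypothesis becomes the analytic input required by Dolgopyat's inductive $L^2$ argument, which can then be run essentially as in the scalar case to yield contraction of $L_{s,\rho}^n$.

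With these spectral estimates in hand, exponential decay of two-point correlations follows from a Paley--Wiener/contour-shift argument on the Laplace transform of the correlation function, summing over $\widehat G$ and absorbing polynomial growth in $\|\rho\|$ using the smoothness of the observables on the fiber. Higher-order correlations are then obtained by the standard iterated transfer-operator argument: inserting the twisted transfer operators between successive observables expresses a $k$-point correlation as an iterated pairing controlled by the same spectral data. The hardest step I foresee is the uniformity in $\rho$: Dolgopyat's original argument depends on constants coming from the geometry of the base, and once a twist is introduced those constants can easily depend on $\|\rho\|$. Producing a sufficiently effective quantitative version of the passage from local $G$-accessibility to infinitesimal non-integrability, so that the oscillation lower bounds driving the Dolgopyat induction remain usable as $\|\rho\|$ grows, is where the real work of the paper must happen.
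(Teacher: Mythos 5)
Your proposal reproduces the paper's strategy essentially step for step: symbolic coding of $f_t$ as a $G$-skew-product suspension over the base Markov model, Peter--Weyl decomposition along $\widehat G$, twisted transfer operators $\mathcal L_{z,\rho}$, conversion of local $G$-accessibility into a lower bound on Dolgopyat's infinitesimal transitivity group (the paper makes the $\|\rho\|$-proportional lower bound explicit via a Casimir element in Theorem~\ref{nli}), and recovery of $k$-point correlation decay by bounding the Laplace transform via $\sum_\rho \mathcal L^n_{z,\rho}$ with the sum over $\widehat G$ tamed by Fourier decay of smooth observables. You also correctly identify the genuine crux — keeping the Dolgopyat estimates uniform as $\|\rho\|\to\infty$ — which is exactly where the paper's careful tracking of the $(1+|\Im z|)\|\rho\|$ dependence in Lemmas~\ref{alternative}--\ref{mainlemma} and Proposition~\ref{uniformc} does the real work.
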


This is analogous to a result of Dolgopyat, who showed in \cite{dolgopyatextension} that accessible compact group extensions of discrete-time expanding dynamical systems are exponentially mixing, and whose techniques we make heavy use of in our proof. While we will not make use of this in the proof, it is also worth remarking that there is a considerable general theory of partially hyperbolic systems. We note in particular the celebrated result of Burns and Wilkinson, who showed in \cite{burnswilkinson} that essentially accessible center-bunched volume-preserving systems are mixing of all orders.

Our approach broadly follows the strategy Winter used in \cite{winter} to establish exponential mixing for frame flows over convex cocompact hyperbolic manifolds. We begin by constructing a symbolic model for the extension flow, establishing uniform local non-integrability estimates for this symbolic model and using arguments employed by Dolgopyat in both \cite{dolgopyatmixing} and \cite{dolgopyatextension} to obtain bounds for the spectrum of certain `twisted' transfer operators. Our main technical result is the following bound:

\begin{specialtheorem}
	Fix notation as in Theorem \ref{maintheorem}, and suppose that the potential $\varsigma$ is $C^1$. Then there are constants $C > 0$ and $r < 1$ so that we have
	\[
		\|\mathcal L^n_{z,\rho} \varphi \|_{L^2(\nu^u)} \leq C \|\varphi\|_{C^1} r^n
	\]
	for all $\varphi \in C^1(U, V^\rho)$, all nontrivial irreducible representations $\rho$ of $G$, and any $z \in \bb C$ with $|\Re(z) - P(\varsigma)| < 1$.
\end{specialtheorem}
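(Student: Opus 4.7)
The plan is to follow the Dolgopyat scheme for twisted transfer operators developed in \cite{dolgopyatmixing, dolgopyatextension} and adapted by Winter in \cite{winter}. First I would establish a Lasota-Yorke inequality of the form
\[
\|\mathcal L_{z,\rho}^n \varphi\|_{C^1} \leq A \theta^n \|\varphi\|_{C^1} + B(1 + |\Im z| + \|d\rho\|)\,\|\mathcal L_{z,\rho}^n \varphi\|_\infty,
\]
where $\theta < 1$ reflects the backward contraction of inverse branches of the symbolic model along unstable leaves and $\|d\rho\|$ denotes the operator norm of the derivative of $\rho$ at the identity. Combined with a single-iterate $L^\infty$-to-$L^2$ smoothing estimate, this reduces the target bound to an $L^2$ contraction $\|\mathcal L_{z,\rho}^{n_0} \varphi\|_{L^2(\nu^u)} \leq \theta'\|\varphi\|_{L^2(\nu^u)}$ for some fixed iterate $n_0$ and some $\theta' < 1$ uniform in the parameters.

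I would then split the parameter space by a frequency threshold $R$. In the bounded regime $|\Im z| + \|d\rho\| \leq R$, only finitely many irreducible $\rho$ appear up to equivalence, and the family $\{\mathcal L_{z,\rho}\}$ varies continuously in its parameters. At $z = P(\varsigma)$ and each nontrivial $\rho$, the spectral radius of $\mathcal L_{z,\rho}$ is strictly less than $1$: this is the analytic expression of local $G$-accessibility, since any eigenfunction at the spectral radius would produce a nontrivial $\rho$-equivariant measurable section that obstructs accessibility. A standard compactness-plus-continuity argument then yields a uniform $\theta' < 1$ throughout this regime.

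The high-frequency regime is the heart of the argument. Adapting Dolgopyat's construction, I would build a family of \emph{Dolgopyat operators} $\mathcal N_{z,\rho,J}$ acting on positive scalar functions over a fine partition $\{U_j\}$ of an unstable rectangle, indexed by cancellation sets $J$. These are required to satisfy: (i) for admissible pairs $(\varphi, h)$ with $|\varphi| \leq h$ and $\|\varphi\|_{C^1}$ bounded by a fixed multiple of $(|\Im z|+\|d\rho\|)\, h$, one has the pointwise domination $|\mathcal L_{z,\rho}^{n_0} \varphi| \leq \mathcal N_{z,\rho,J} h$; and (ii) $\|\mathcal N_{z,\rho,J} h\|_{L^2(\nu^u)} \leq \theta' \|h\|_{L^2(\nu^u)}$ for some $\theta' < 1$ independent of $(z,\rho)$. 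Cancellation in (i) is produced by comparing two branches of the inverse of the symbolic map on each cell: their contributions to the twisted transfer sum differ by a factor in $\bb C^\times \times \rho(G)$ whose phase varies along the unstable leaf at a rate proportional to $|\Im z| + \|d\rho\|$, with the constant coming from the uniform local non-integrability estimates proved earlier through the infinitesimal transitivity group. Diametric regularity of $\nu^u$ then upgrades the resulting pointwise phase cancellation on a positive-measure subset of each $U_j$ to an $L^2$ contraction.

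The main obstacle will be making this construction uniform across all nontrivial irreducible $\rho$ simultaneously. The cancellation is driven by the joint parameter $(\Im z, d\rho) \in \bb R \oplus \mathfrak g^*$, and the non-integrability estimate must guarantee that for any nonzero such pair the composite cocycle produces nontrivial phase variation in the paired direction; this is precisely what is encoded by the infinitesimal transitivity group spanning $\bb R \oplus \mathfrak g$ under local $G$-accessibility. Quantitatively, the scale of the partition $\{U_j\}$ and the choice of $J$ must shrink with $|\Im z| + \|d\rho\|$ in order to exploit the phase oscillations at the correct frequency, and threading the $\rho$-dependence of these choices through the proof without losing uniformity will be the principal technical burden.
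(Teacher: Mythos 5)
Your proposal follows the same Dolgopyat scheme as the paper's Section \ref{transopsec}, but organizes the argument differently in a couple of genuinely interesting ways. The main structural difference is your explicit low/high frequency split: you treat the bounded regime $|\Im z| + \|\rho\| \leq R$ by a compactness argument (finitely many irreducibles up to isomorphism, quasi-compactness from the Lasota--Yorke inequality, and ruling out peripheral eigenvalues via the $\rho$-equivariant section obstruction that accessibility forbids), reserving the explicit Dolgopyat cancellation for the high-frequency regime. The paper instead pursues a single uniform estimate for all nontrivial $\rho$ and all $z$, controlling the iteration through the Lipschitz class $\mathcal K_{C(1+|\Im(z)|)\|\rho\|}$ (Proposition \ref{uniformc}) and building what is effectively your Dolgopyat operator as $\mathcal L^{n_0}_{\Re(z),0}(\beta\cdot)$ for a bump function $\beta$ supported on a Vitali cover of $U_{lni}$ (Lemmas \ref{betalemma} and \ref{mainlemma}). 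Your split is the more standard packaging and, I would argue, the safer one: the non-integrability estimate in Theorem \ref{nli} is a lower bound $\geq \epsilon\|\rho\|$ on a $d\rho$-derivative only, and matching it to the $(1+|\Im(z)|)\|\rho\|$ scale used in Lemma \ref{betalemma} requires absorbing the temporal oscillation into the same computation; your packaging of the joint cancellation as an estimate paired against $\bb R \oplus \mathfrak g^*$, with the transitivity data required to span $\bb R \oplus \mathfrak g$, makes that combination explicit rather than implicit. Note, though, that the paper's Definition \ref{accessible} of local $G$-accessibility concerns only the group component; the $\bb R$ part of your joint transitivity group must come from the separate hypothesis that the base flow lies in Dolgopyat's exponentially mixing class, not from $G$-accessibility itself. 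Apart from these points the high-frequency mechanism is essentially the same in both treatments: pairs of inverse branches whose combined twist oscillates at a rate commensurate with $|\Im z| + \|\rho\|$, Lipschitz control at the reciprocal scale, and diametric regularity of $\nu^u$ to promote pointwise cancellation on a positive-measure set to an $L^2$ contraction.
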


The principal novelty here, and our main point of divergence with \cite{winter}, is that we use the local $G$-accessibility (see Definition \ref{accessible}) of the extension flow $f_t$ to obtain the necessary local non-integrability estimates. As in \cite{winter}, we measure the local non-integrability of the extension using Dolgopyat's infinitesimal transitivity group.

While we rely heavily on the techniques used in \cite{dolgopyatextension}, translating these into our setting presents several difficulties. Most notably, the non-integrability of the strong stable and unstable foliations of the base flow $g_t$ and the nontriviality of the fiber bundle $\pi \colon M \to N$ require some additional care to properly deal with, though we are ultimately able to adapt many of the same arguments.

Finally, while we require a large number of hypotheses in the statement of Theorem \ref{maintheorem}, there are several cases in which it can be rewritten more succinctly. The most natural examples in the class of systems under study here are the full frame flows on closed manifolds of quarter-pinched negative curvature, where we can give criteria for exponential mixing. The doubling property in this case is a consequence of, for instance, \cite[Proposition 3.12]{pps}.

\begin{specialcorollary}
	Let $N$ be the unit tangent bundle of an $n$-manifold of quarter-pinched negative curvature equipped with an equilibrium measure $\nu$ for a H\"older potential, and let $M$ be the oriented full frame bundle over $N$ equipped with the (locally defined) natural extension $\nu \times \omega$ by the Haar measure on $\SO(n-1)$. If the frame flow $f_t$ is locally accessible, then it enjoys exponential decay of correlations of all orders.
\end{specialcorollary}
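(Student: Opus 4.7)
The plan is to obtain the corollary as a direct application of Theorem~\ref{maintheorem}, reducing it to a verification of each hypothesis in the frame-bundle setting. The base flow $g_t$ is the geodesic flow on the unit tangent bundle $N$, which is a smooth transitive Anosov flow; the total space $M$ is a principal $\SO(n-1)$-bundle over $N$ with fiber $F = \SO(n-1)$ given a bi-invariant metric; and parallel transport along geodesics defines the frame flow $f_t$, which acts by isometries between fibers, making $\pi \colon M \to N$ an isometric fiber bundle and $f_t$ an isometric extension of $g_t$.

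For the group-theoretic hypothesis I would take $G = \SO(n-1)$ acting on $F$ by left translation. This $G$ is closed, connected, and transitive on $F$, and its Haar measure pushes forward to $\omega$. Normality in $\Isom(F)$ is a routine check: the identity component of $\Isom(F)$ is generated by left and right translations (which commute with each other), while any outer isometry arises from a group automorphism $\sigma$ and satisfies $\sigma L_g \sigma^{-1} = L_{\sigma(g)}$, so the subgroup of left translations is preserved under conjugation throughout $\Isom(F)$.

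The remaining conditions concern only the base flow and are entirely standard. The quarter-pinching assumption implies, via the classical regularity theory for Anosov splittings, that the strong stable and strong unstable foliations of $g_t$ are $C^1$. Diametric regularity of the unstable conditionals $\nu^u$ of an equilibrium state for a H\"older potential is a consequence of \cite[Proposition 3.12]{pps}, as already noted in the text preceding the corollary. Local accessibility of $f_t$ is imposed by hypothesis, and since $G$ acts transitively on $F$ it coincides with local $G$-accessibility in the sense of Definition~\ref{accessible}.

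Once these hypotheses are in place, the conclusion follows directly from Theorem~\ref{maintheorem}. There is no genuine obstacle to overcome here: the corollary is essentially a packaging of Theorem~\ref{maintheorem} specialized to the frame-flow setting, with the nontrivial geometric inputs (regularity of the foliations, doubling of equilibrium measures, normality of the fiber group inside its isometry group) supplied by classical results or by the cited external literature.
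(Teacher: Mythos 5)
Your proposal is correct and takes the same approach as the paper: specialize Theorem~\ref{maintheorem} with $G = \SO(n-1)$ acting on the fiber by left translation, using the quarter-pinching hypothesis to guarantee $C^1$ strong stable and unstable foliations and \cite[Proposition 3.12]{pps} for the doubling property of $\nu^u$. The only hypothesis of Theorem~\ref{maintheorem} you left implicit --- that $G$ has no proper transitive normal subgroup --- is automatic here, since the left-translation action is simply transitive and so the only transitive subgroup of $G$ is $G$ itself.
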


\addtocontents{toc}{\protect\setcounter{tocdepth}{1}}
\subsection*{Acknowledgements} I am indebted to my advisor, Ralf Spatzier, for his tireless and persistent support, and to Amie Wilkinson for several brief conversations that proved to be exceedingly prescient. I have certainly also benefited from countless other discussions with more people than I can name, but I would like to thank Mitul Islam in particular for many productive exchanges.

\addtocontents{toc}{\protect\setcounter{tocdepth}{2}}
\section{Preliminaries}

We fix some notation that we will use throughout this paper: $N$ will be a closed Riemannian manifold equipped with a probability measure $\nu$ and $g_t \colon N \to N$ a $C^\infty$ Anosov flow preserving $\nu$. Let $M$ be a compact Riemannian fiber bundle $\pi \colon M \to N$ whose fibers $\pi^{-1}(x)$ are each isometric to a fixed compact, Riemannian manifold $F$, and let $f_t \colon M \to M$ be an extension of $g_t$ satisfying $\pi \circ f_t = g_t \circ \pi$. We equip $M$ with the product measure $\mu$ of $\nu$ and a probability measure on $F$.

The motivating example for everything that follows is when $N$ is the unit tangent bundle for a closed $n$-manifold of quarter-pinched negative curvature, $M$ is the oriented orthonormal frame bundle, $g_t$ is the geodesic flow and $f_t$ is the frame flow. The natural choice for $\nu$ in this case is the Liouville measure on $N$, and $\mu$ is then locally a product of the Liouville measure and the normalized Haar measure on $\SO(n-1,\bb R)$.

\subsection{Dynamical preliminaries}

Our goal is to show that, with appropriate hypotheses, $f_t$ enjoys \textit{exponential decay of correlations} or, equivalently, is \textit{exponentially mixing}.

\begin{definition}
	\label{expmixingdefn}
  A flow $f_t$ is said to be \textit{exponentially mixing of order $k$ for $C^\alpha$ functions} if there are constants $C > 0$ and $r < 1$ so that
  \[
	  \left| \int_M  \varphi_0 \cdot   (\varphi_1 \circ f_{t_1}) \cdot \ldots \cdot (\varphi_k \circ f_{t_k})  \, d\nu - \left(\int_M \varphi_i \, d\nu\right) \cdot \ldots \cdot \left(\int_M \varphi_k \, d\nu \right) \right| < Cr^{\smash{\raisebox{-3pt}{$\substack{\scaleto{\min}{4pt} \\ \raisebox{2pt}{$\scaleto{i \neq j}{3pt}$}}$}} |t_i - t_j|}\cdot \|\varphi_0\|_{C^\alpha} \cdot \ldots \cdot \|\varphi_k\|_{C^\alpha}
  \]
  for all $\varphi_i \in C^\alpha(M, \bb C)$. Here, $\| \cdot \|_{C^\alpha}$ denotes the usual $\alpha$-H\"older norm
  \[
    \|\varphi\|_{C^\alpha} \coloneqq \sup_x {\lvert\varphi(x)\rvert} + \sup_{x \neq y} {\frac{|\varphi(x) - \varphi(y)|}{|x - y|^{\mathrlap{\alpha}}}}
  \] 
  on the space $C^\alpha(M, \bb C)$ of $\alpha$-H\"older complex-valued functions.
\end{definition}

Actually, we will prove that there are constants $C > 0$ and $r < $ so that
  \begin{equation}
	  \label{expmixingzero}
	  \left| \int_M \varphi_0 \cdot \left( \varphi_1 \circ f_{t_1} \right) \cdot \ldots \cdot \left( \varphi_k \circ f_{t_k} \right) \, d\nu \right| < C r^{\max t_i} \cdot \|\varphi_0\|_{C^\alpha} \cdot \ldots \cdot \|\varphi_k\|_{C^\alpha}
  \end{equation}
  for all $\varphi_i \in C^\alpha(M, \bb C)$ with $\int_M \varphi_i \, d\nu = 0$. It is an elementary exercise to show that this is equivalent to Definition \ref{expmixingdefn}.

  While having some degree of regularity is critical, exponential mixing for H\"older functions and exponential mixing for more regular functions are equivalent in our case by a standard approximation argument. We provide a brief outline of this argument in the simplest case, to justify our later attention to $C^1$ (rather than H\"older) functions.

\begin{lemma}
  Suppose $f_t$ is exponentially mixing of order $k$ for $C^1$ functions. Then, $f_t$ is exponentially mixing of order $k$ for $C^\alpha$ functions, for any $\alpha > 0$.
  \label{expmixingregularity}
\end{lemma}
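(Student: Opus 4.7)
The plan is to approximate each $C^\alpha$ function by a $C^1$ one via mollification, apply the $C^1$ exponential mixing hypothesis to the smoothed product, bound the remaining terms in $L^\infty$, and then choose the mollification scale optimally against $T = \max_i t_i$. By the equivalence noted in the discussion preceding the lemma, it suffices to prove \eqref{expmixingzero} for $\varphi_i \in C^\alpha$ with $\int \varphi_i \, d\nu = 0$.

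Using a partition of unity and convolution in local charts, for each such $\varphi$ and each $\epsilon > 0$ I would produce a $C^1$ function $\varphi_\epsilon$ satisfying the standard bounds
\[
	\|\varphi - \varphi_\epsilon\|_{L^\infty} \leq C \|\varphi\|_{C^\alpha} \epsilon^\alpha, \qquad \|\varphi_\epsilon\|_{C^1} \leq C \|\varphi\|_{C^\alpha} \epsilon^{\alpha - 1};
\]
subtracting its integral produces a mean-zero approximant $\tilde\varphi_\epsilon$ satisfying the same two bounds up to constants, since the mean of $\varphi_\epsilon$ is itself $O(\|\varphi\|_{C^\alpha}\epsilon^\alpha)$. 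Writing $\varphi_i = \tilde\varphi_{i,\epsilon} + (\varphi_i - \tilde\varphi_{i,\epsilon})$ and expanding the product $\prod_{i=0}^k (\varphi_i \circ f_{t_i})$ then yields $2^{k+1}$ terms. The fully smoothed term contributes at most $C r_1^T \epsilon^{(k+1)(\alpha-1)} \prod_i \|\varphi_i\|_{C^\alpha}$ by the $C^1$ hypothesis (with rate $r_1 < 1$), while each of the remaining $2^{k+1} - 1$ terms contains at least one factor $\varphi_i - \tilde\varphi_{i,\epsilon}$, which bounds that term in $L^\infty$ by $C \epsilon^\alpha \prod_i \|\varphi_i\|_{C^\alpha}$. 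Setting $\epsilon = r_1^{T/\beta}$ with $\beta = 1 + k(1-\alpha)$ balances the two resulting exponents and yields \eqref{expmixingzero} with rate $r = r_1^{\alpha/(1+k(1-\alpha))} < 1$.

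There is no genuine obstacle here, consistent with the author's own description of this as a standard approximation argument; the only mildly technical step is carrying out mollification on a compact manifold, but the relevant sup-norm and $C^1$-norm bounds are chart-independent up to multiplicative constants, and the mean-zero correction is essentially free.
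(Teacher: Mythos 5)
Your proposal is correct and follows the same underlying strategy as the paper: approximate each mean-zero $C^\alpha$ observable by a $C^1$ one with controlled $C^0$-error and $C^1$-norm, apply the $C^1$ mixing hypothesis to the fully smoothed term, bound the remaining terms in $L^\infty$, and then choose $\epsilon$ as a power of the decay rate to balance the two contributions. The only structural difference is in handling general $k$: the paper carries out the $k=1$ case by two sequential replacements (first $\varphi$, then $\psi$), citing \cite[Lemma 2.4]{gorodnikspatzier} for the approximation bounds, and asserts the general case by induction, whereas your single expansion of the $(k+1)$-fold product into $2^{k+1}$ cross terms treats all $k$ at once — a minor simplification in bookkeeping rather than a different idea, and your exponent computation ($\beta = 1 + k(1-\alpha)$, rate $r_1^{\alpha/\beta}$) checks out.
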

\begin{proof}
	We perform the argument in the case $k = 1$. The general case can be obtained by repeating this inductively.

	By \cite[Lemma 2.4]{gorodnikspatzier}, we can find smooth approximations $\varphi_\epsilon$ to $\varphi$ with $\int_M \varphi_\epsilon \, d\mu = 0$ satisfying
  \[
    \|\varphi_\epsilon - \varphi\|_{C^0} \leq \epsilon^\alpha \|\varphi\|_{C^\alpha} \text{ and } \|\varphi_\epsilon\|_{C^1} \leq  \epsilon^{-\dim(M) - 1} \|\varphi\|_{C^0}\\
  \]
  for all $\epsilon > 0$. Since $f_t$ is exponentially mixing (at rate, say, $r^t$) for $C^1$ functions by hypothesis, we can write
   \begin{spreadlines}{0.5em}
  \begin{align*}
    \left| \int_M \varphi \cdot (\psi \circ f_t) \, d\mu \right| &\leq \left| \int_M \varphi \cdot (\psi \circ f_t) \, d\mu - \int_M \varphi \cdot (\psi \circ f_t) \, d\mu \right| - \left| \int_M \varphi_\epsilon \cdot (\psi \circ f_t) \, d\mu \right|\\
    &\leq \left| \int_M (\varphi - \varphi_\epsilon) \cdot (\psi \circ f_t) \, d\mu \right| + Cr^t \|\varphi_\epsilon\|_{C^1} \|\psi\|_{C^1} \\
    &\leq \|\varphi - \varphi_\epsilon\|_{C^0} \|\psi\|_{C^0} + Cr^t \epsilon^{-(\dim(M) + 1)}\|\varphi\|_{C^0} \|\psi\|_{C^1} \\
    &\leq \epsilon^\alpha \|\varphi\|_{C^\alpha} \|\psi\|_{C^0} + C r^t\epsilon^{-(\dim(M)+1)}\|\varphi\|_{C^\alpha} \|\psi\|_{C^1} \\
    &= \left( \epsilon^\alpha + C r^t\epsilon^{-(\dim(M)+1)} \right) \|\varphi\|_{C^\alpha} \|\psi\|_{C^1}
  \end{align*}
  \end{spreadlines}
  for any fixed $t > 0$. Note that the last line does not involve $\varphi_\epsilon$, so we can simply set $\epsilon = r^{kt}$ with $k = \frac 1 {2(\dim(M) + 1)}$. This leaves us with the inequality
  \begin{align*}
    \left| \int_M \varphi \cdot (\psi \circ f_t) \, d\mu \right| &\leq \left(r^{\alpha(\dim(M) + 1)^{-1} t} + Cr^t  r^{-0.5t}\right) \|\varphi\|_{C^\alpha} \|\psi\|_{C^1}\\
    &\leq (1 + C)\left(r^{\min(0.5, \alpha (\dim(M) + 1)^{-1})}\right)^t \|\varphi\|_{C^\alpha} \|\psi\|_{C^1}
  \end{align*}
  and, noting that the base of the exponential term is at most $1$, we see that $f_t$ is exponentially mixing for any $\varphi \in C^\alpha(M)$ and $\psi \in C^1(M)$. Now, set $D \coloneqq C + 1$ and $s \coloneqq r^{\min(0.5, \alpha(\dim(M) + 1)^{-1})}$ and consider $\varphi, \xi \in C^\alpha(M)$. Once again, we can find a smooth approximation $\xi_\epsilon$ to $\xi$ with $\int_M \xi \, d\mu = 0$ satisfying
  \[
    \|\xi_\epsilon - \xi\|_{C^0} \leq \epsilon^\alpha \|\xi\|_{C^\alpha} \text{ and } \|\xi_\epsilon\|_{C^1} \leq  \epsilon^{-\dim(M) - 1} \|\xi\|_{C^0}\\
  \] for all $\epsilon > 0$. By what we have just shown, we can now write
  \begin{spreadlines}{0.5em}
  \begin{align*}
    \left| \int_M \varphi \cdot (\xi \circ f_t) \, d\mu \right| &\leq \left| \int_M \varphi \cdot (\xi \circ f_t) \, d\mu - \int_M \varphi \cdot (\xi_\epsilon \circ f_t) \, d\mu \right| + \left| \int_M \varphi \cdot (\xi_\epsilon \circ f_t) \, d\mu \right|\\
    &\leq \left| \int_M \varphi \cdot ( (\xi - \xi_\epsilon) \circ f_t) \, d\mu \right| + Ds^t \|\varphi\|_{C^\alpha} \|\xi_\epsilon\|_{C^1}\\
    &\leq \|\xi - \xi_\epsilon\|_{C^0} \|\varphi\|_{C^0} + Ds^t \epsilon^{-(\dim(M)+1)} \|\varphi\|_{C^\alpha} \|\xi\|_{C^0}\\
    &\leq \epsilon^\alpha\|\xi\|_{C^\alpha} \|\varphi\|_{C^\alpha} + Ds^t \epsilon^{-(\dim(M)+1)} \|\varphi\|_{C^\alpha} \|\xi\|_{C^\alpha}\\
    &\leq \left(\epsilon^\alpha + Ds^t \epsilon^{-(\dim(M)+1)}\right) \|\varphi\|_{C^\alpha} \|\xi\|_{C^\alpha}
  \end{align*}
  \end{spreadlines}
  for any fixed $t > 0$. Once again, we can set $\epsilon = s^{kt}$ with $k = \frac 1 {2(\dim(M) + 1)}$, leaving us with the bound
  \begin{align*}
    \left| \int_M \varphi \cdot (\xi \circ f_t) \, d\mu \right| &\leq \left(s^{\alpha(\dim(M) + 1)^{-1} t} + Ds^t  s^{-0.5t}\right) \|\varphi\|_{C^\alpha} \|\xi\|_{C^\alpha}\\
    &\leq (1 + D)\left(s^{\min(0.5, \alpha (\dim(M) + 1)^{-1})}\right)^t \|\varphi\|_{C^\alpha} \|\xi\|_{C^\alpha}
  \end{align*}
  which is independent of $\epsilon$. Once again, we note that the base of the exponential term is at most $1$ and does not depend on $\varphi$ or $\xi$, completing our proof.
\end{proof}

Of course, whether a system is exponentially mixing depends on the measure under consideration. We will be interested in equilibrium measures for H\"older potentials.

\begin{definition}
  For a continuous function $\varsigma \colon N \to \bb R$, we call a measure $\nu$ \textit{an equilibrium state for $g_t$ with potential $\varsigma$} if $\nu$ maximizes
  \[
  \int_N \varsigma \, d\nu + h_\nu(g_1)
  \]
  among all $g_t$-invariant probability measures on $N$. To emphasize the potential, we will write $\nu_\varsigma$ for the equilibrium state corresponding to $\varsigma$ when it exists and is unique.
\end{definition}

Of particular importance to us is the fact that equilibrium states admit a local product structure with respect to the strong stable and unstable foliations, and that they are invariant under the appropriate transfer operators; we will expand on both of these properties in due course.

When $g_t$ is an Anosov flow on a compact manifold, it is a classical result of Bowen and Ruelle \cite{bowenruelle} that equilibrium states for H\"older potentials exist and are unique. Of course, the measure of maximal entropy is always an equilibrium state for the trivial potential $\varsigma = 0$. In the case of the geodesic flow in negative curvature, the Liouville measure is the equilibrium state for the geometric potential
\[
\varsigma (x) = -\left.\frac d {dt}\right|_{t=0}  \left(\log \left\| \left.dg_t\right|_{W^u(x)}\right\|\right)
\]
on the unit tangent bundle.

We are interested in extensions of Anosov flows that act fiberwise by isometries.

\begin{definition}
	We call a smooth flow $f_t \colon (M,\mu) \to (M,\mu)$ on a closed Riemannian manifold $M$ a \textit{$G$-extension} of $g_t \colon (N,\nu) \to (N,\nu)$ if $\pi \colon M \to N$ is a smooth fiber bundle where
\begin{itemize}
	\item the fibers $\pi^{-1}(x)$, with the induced metric, are all isometric to a closed Riemannian manifold $F$,
	\item $G$ is a closed, connected normal subgroup of the isometry group $\Isom(F)$,
\item $G$ acts transitively on $F$ and has no proper transitive normal subgroups,
\item $\pi \circ f_t = g_t \circ \pi$,
\item there is an atlas of trivializations of $\pi \colon M \to N$ for which all transition functions lie in $G$,
\item with respect to these trivializations, the isometries of $F$ induced by the flow $f_t$ all lie in $G$,
\item $f_t$ preserves a measure $\mu$ satisfying $\pi_*(\mu) = \nu$, and
\item the fiberwise disintegration of $\mu$ along the fibers of $\pi \colon M \to N$ is the pushforward of the normalized Haar measure on $G$ to each fiber.
\end{itemize}
\end{definition}

The primary driver of exponential mixing in our case will be a stronger variant of local accessibility.

\begin{definition}
	Let $f_t \colon M \to M$ be a $G$-extension of $g_t \colon N \to N$. We call $f_t$ \textit{locally $G$-accessible} if, for every $\epsilon > 0$, any trivialization $\phi \colon \pi^{-1}(B_\epsilon(x)) \to B_\epsilon(x) \times F$ defined near $x \in N$ and any isometry $h \in G$, there is a sequence of points $x_0, \ldots, x_k \in N$ for which
	\begin{itemize}
		\item $x_0 = x_k = x$,
		\item $x_0, \ldots, x_k \in B_\epsilon(x)$,
		\item we either have $x_{i+1} \in W^{su}_{g_t}(x_i)$ or $x_{i+1} \in W^{ss}_{g_t}(x_i)$ for each $i$, and
		\item we have $h = h_k^0 \circ \ldots \circ h_0^1$, where $h_i^{i+1} \colon F \to F$ is given (via $\phi$) by the isometry $\pi^{-1}(x_i) \to \pi^{-1}(x_{i+1})$ induced by leaves of the strong stable or strong unstable foliation of $f_t$.
	\end{itemize}
	\label{accessible}
\end{definition}

\subsection{Symbolic dynamics}
In this section, we will build a discrete, symbolic model for $f_t$ -- we follow \cite{winter}, and accomplish this by artificially extending a standard Markov partition for the base flow. The results of \cite{bowensymbolic, ratnersymbolic} on the existence of Markov partitions for hyperbolic dynamical systems are classical and well-understood; as such, we will recall some of the important points but refrain from delving into the details.

\begin{theorem}[Bowen, Ratner]
	\label{bowenratner}
If $g_t$ is Anosov, then $g_t$ has a Markov partition of size $\epsilon$ for any sufficiently small $\epsilon > 0$.
\end{theorem}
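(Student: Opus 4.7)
The plan is to follow the classical construction of Bowen and Ratner, sketching the main steps rather than providing full detail. Because $g_t$ is Anosov, the manifold $N$ admits a local product structure inherited from the flow and the strong stable/unstable foliations: for sufficiently small $\delta$, any $x, y \in N$ with $d(x,y) < \delta$ determine a unique bracket $[x,y]$ as the nearby transverse intersection of the weak stable leaf through $x$ with the strong unstable leaf through $y$. Fixing $\epsilon > 0$ smaller than $\delta$, I would begin by choosing finitely many open disks $D_1, \ldots, D_m$ transverse to the flow, each of diameter less than $\epsilon$, such that $\bigcup_{0 \le t \le t_0} g_t(D_i) = N$ for some $t_0 > 0$.

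Next, I would construct initial rectangles $R_i = [A_i^s, A_i^u] \subset D_i$ by taking brackets of small local strong stable and strong unstable discs inside each $D_i$. Setting $\Sigma = \bigsqcup_i R_i$, the Poincar\'e first-return map $P \colon \Sigma \to \Sigma$ is well-defined away from a small boundary set, with a return-time function $\tau \colon \Sigma \to (0,\infty)$ bounded above and below. A Markov partition for $g_t$ of size $O(\epsilon)$ can then be obtained by suspending a Markov partition of $\Sigma$ under the hyperbolic homeomorphism $P$, with atoms of the form $\{g_t(x) : x \in R_i,\ 0 \le t < \tau(x)\}$.

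To produce the Markov partition for $P$ itself, I would refine the initial collection by replacing it with the atoms of the partition generated by $\{P^n(R_j) : |n| \le N\}$, choosing $N$ large enough that the expansion and contraction of $P^{\pm N}$ force the strong stable (resp.\ strong unstable) slices of every refined rectangle to be mapped \emph{across} the corresponding direction of its image under $P$ -- this is precisely the Markov property. Shrinking $\epsilon$ controls the diameter of the refined partition, giving a Markov partition of arbitrarily small size.

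The main obstacle, and the reason the argument is nontrivial, is the boundary analysis: the naive refinement can produce rectangles whose stable or unstable boundaries fail to be $P$-invariant, so one needs a shadowing or specification argument to verify that successive refinements stabilize to a genuinely Markov family. This is the technical heart of the Bowen--Ratner construction, which I would cite rather than reproduce.
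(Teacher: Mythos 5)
The paper does not prove this result; it cites \cite{bowensymbolic, ratnersymbolic} and explicitly states that it will ``recall some of the important points but refrain from delving into the details,'' so there is no internal argument to compare your sketch against. Judged against the actual Bowen--Ratner construction, though, your outline has a genuine gap in the middle.

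The problematic step is the claim that one obtains the Markov property by ``replacing [the initial rectangles] with the atoms of the partition generated by $\{P^n(R_j) : |n| \le N\}$, choosing $N$ large enough that the expansion and contraction of $P^{\pm N}$ force the \ldots slices \ldots to be mapped across.'' Finite refinement by iterates never produces the Markov property from a non-Markov cover: the refined atoms are smaller, but the requirement that $P$ carry an unstable slice of $R'_i$ \emph{onto the full} unstable slice of $R'_j$ (rather than merely across a large fraction of it) is a genuine alignment condition between neighbouring rectangles, and it is not restored by intersecting with more preimages --- the boundary mismatch just propagates to the finer scale. Bowen and Ratner do not refine their way to Markovness. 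Instead, they build rectangles that are Markov \emph{by construction}: fix an $\alpha$-dense set of points with small transversal sections, form the shift space of pseudo-orbits compatible with these sections, use the shadowing lemma to define a map from admissible bi-infinite sequences to genuine orbits, and take as ``proto-rectangles'' the images of cylinder sets under this coding. These proto-rectangles overlap, and the remaining work is the Bowen-style Boolean subdivision that resolves the overlapping cover into a true partition while preserving the Markov relations. Your final paragraph gestures at this by invoking shadowing, but frames it as a patch on top of the refinement; in fact shadowing is not a correction to the refinement idea but a replacement for it, and the proposal as written does not contain the mechanism that actually forces the Markov property.

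A secondary imprecision: you say the suspension is ``under the hyperbolic homeomorphism $P$.'' The Poincar\'e return map $P$ to a fixed finite union of transversals is neither globally defined nor continuous --- the paper is careful about this in the remark following the definition of the Markov rectangle --- so any appeal to ``a Markov partition for the hyperbolic homeomorphism $P$'' needs to first restrict to the residual set where $P$ and its iterates are continuous, which is itself part of the content of the Bowen--Ratner theorem rather than an ingredient one may freely assume.
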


Specifically, Bowen and Ratner construct a Markov partition by taking local strong stable and unstable segments and forming a \textit{Markov rectangle}.

\begin{definition}
	Given $x \in N$ and $\epsilon > 0$, consider the local strong and weak stable and unstable manifolds of size $\epsilon$ through $x$ given by\\
	\begin{minipage}[]{0.49\textwidth}
		\begin{align*}
			W^{ss}_\epsilon(x) &\coloneqq \left( W^{ss}(x) \cap B_\epsilon(x) \right)^\circ\\
			W^{ws}_\epsilon(x) &\coloneqq \left( W^{ws}(x) \cap B_\epsilon(x) \right)^\circ
		\end{align*}
	\end{minipage}\hfill
	\begin{minipage}[]{0.49\textwidth}
		\begin{align*}
			W^{su}_\epsilon(x) &\coloneqq \left( W^{su}(x) \cap B_\epsilon(x) \right)^\circ\\
			W^{wu}_\epsilon(x) &\coloneqq \left( W^{wu}(x) \cap B_\epsilon(x) \right)^\circ
		\end{align*}
	\end{minipage}\vspace{6pt}\\
	where in each case we have taken the connected component through $x$.
	For $u \in W^{su}_\epsilon(x)$ and $s \in W^{ss}_\epsilon(x)$, we define the \textit{bracket of $u$ and $s$} to be the point of intersection
	\[
		[u,s] = W^{ss}_\epsilon(u) \cap W^{wu}_\epsilon(s)
	\]
	which is necessarily unique when $\epsilon > 0$ is sufficiently small. We define the \textit{Markov rectangle} $[W^{su}_\epsilon(x), W^{ss}_\epsilon(x)]$ to be the set
  	\[
		[W^{su}_{\epsilon}(x), W^{ss}_{\epsilon}(x)] \coloneqq \{ [u,s] \mid u \in W^{su}_{\epsilon}(x) \text{ and } s \in W^{ss}_{\epsilon}(x) \}
 	 \]
	 assuming $\epsilon > 0$ is sufficiently small.
\end{definition}

For now, fix $\epsilon > 0$ chosen to be small enough that a Markov partition exists; we will likely need to adjust our choice of $\epsilon$ as we proceed. We let $\mathcal R = \left\{ R_1, \ldots, R_k \right\}$ be a Markov partition of size $\epsilon$ for $g_t$, where each rectangle $R_i \coloneqq [U_i,S_i]$ is generated by local strong stable and unstable manifolds $S_i \coloneqq W^{ss}_\epsilon(z_i)$ and $U_i \coloneqq W^{su}_\epsilon(z_i)$ through points $z_i \in N$. Set $R \coloneqq \bigcup R_i$, $U \coloneqq \bigcup U_i$ and $S \coloneqq \bigcup S_i$, and let $\mathcal P \colon R^* \to R^*$ be the Poincar\'e return map defined on the appropriate full-measure residual subset $R^* \subset R$ with return time $\tau \colon R^* \to \bb R$.

\begin{remark}
	Since we assumed that the strong stable and unstable foliations for $g_t$ were $C^1$, each $R_i$ is naturally an open $C^1$ submanifold of $N$. It is important to note that both the Poincar\'e return map $\mathcal P$ and the return time map $\tau$ defined above are the restrictions of locally $C^1$ functions on $R$, but neither $\mathcal P$ nor $\tau$ is even continuous on $R$. Indeed, the points of discontinuity for $\mathcal P$, $\tau$ and their iterates are exactly what is removed in passing from $R$ to $R^*$.  \end{remark}

A more extensive discussion of this can be found in \cite[p.~380-382]{chernov}. We will write $(\Sigma, \mathcal P)$ for the Markov shift corresponding to the partition $\mathcal R$.

\begin{definition}
	The \textit{suspension} of $(\Sigma, \mathcal P)$ with roof function $\tau$ is the flow $g'_t \colon \Sigma \times \bb R/\sim \to \Sigma \times \bb R/\sim$ defined by $g'_t(x, s) = (x, s + t)$, where we have declared $(x, \tau(x)) \sim (\mathcal P(x), 0)$.
\end{definition}

\noindent Theorem \ref{bowenratner} says exactly that the natural inclusion of $R$ into $N$ induces a H\"older-continuous semi-conjugacy between $g_t$ and $g'_t$ -- this is precisely the result we wish to extend to $f_t$. 

Fix a finite cover $\left\{ V_i \right\}$ that trivializes the fiber bundle $\pi \colon M \to N$, with isometries $\phi_i \colon \pi^{-1}(V_i) \to V_i \times F$. At this point, we may need to revisit our choice of $\epsilon$: let us assume that we have taken $\epsilon$ to be smaller than the Lebesgue number of the cover $\left\{ V_i \right\}$. By definition, this means that each $R_j$ lies entirely in some $V_{k(j)}$, and hence $\phi_{k(j)}$ induces an isometry $\pi^{-1}(R_j) \to R_j \times F$. We can put these together to form an isometry $\phi \colon \pi^{-1}(R) \to R \times F$.

It is worth noting that we have considerable freedom in our choice of isometries $\phi_i$, and this is something we will want to exploit to simplify our arguments in \S\ref{snli}. Since the center-stable foliation of $f_t$ is $C^1$, we can modify a given $\phi_i$ so that it is constant along the (local) leaves of this foliation. Specifically, we will assume that the projection of each $\phi_i \colon \pi^{-1}(V_i) \to V_i \times F$ onto $F$ is constant on each connected component $\left(W^{su}_{f_t}(x) \cap V_i\right)^\circ$ for each $x \in V_i$..

We will realize $f_t$ as a suspension flow on $\Sigma \times F$ with roof function $\tau$. To do this, we will need information on the fiberwise action of $g_t$.

\begin{definition}
	For $x \in R_{j_1}$ with $\mathcal P(x) \in R_{j_2}$, we define the \textit{temporal holonomy $\Hol(x)$ at $x$} to be the isometry between $\phi_{k(j_1)}(\pi^{-1}(x))$ and $\phi_{k(j_2)}(\pi^{-1}(\mathcal P(x)))$ induced by $f_t$. This defines a function $\Hol \colon R^* \to G$.\\
	\indent	We will often write $\Hol^{(n)}(x)$ for $\Hol(\mathcal P^{n-1}(x)) \circ \ldots \circ \Hol(x)$. Note that function composition is the multiplication operation in $G$.
	\label{temporalholonomy1}
\end{definition}

\noindent Given the temporal holonomy function $\Hol \colon R^* \to G$, we can of course recover $f_t$ as a suspension flow on $(\Sigma \times F, \mathcal P \times \Hol)$ with roof function $\tau$. However, we will push this a step further.

We can define a projection along the leaves of the strong stable foliation $\proj_S \colon R \to U$ by setting $\proj_S([u,s]) = u$. This allows us to construct a uniformly expanding model $U$ for $g_t$, where the Poincar\'e return map descends to a map $\sigma \colon U^* \to U^*$ given by $\sigma \coloneqq \proj_S \circ \mathcal P$.

\begin{remark}
	By construction, the return time map $\tau$ and the temporal holonomy function $\Hol$ are both constant along the leaves of the strong stable foliation of $g_t$, and hence descend to functions $\tau \colon U \to \bb R$ and $\Hol \colon U \to G$.
\end{remark}

Set $U_{\tau} \coloneqq U^* \times \bb R / \sim$ and $U_{\tau, \Hol} \coloneqq U^* \times \bb R \times F / \sim$, where we declare $(u, \tau(u)) \sim (\mathcal P(u), 0)$ and $(u, \tau(u), k) \sim (\mathcal P(u), 0, (\Hol(u))(k))$ respectively. We write $f'_t$ for the suspension flow $f'_t(u, s, k) = (u, s+t, k)$ on $U_{\tau,\Hol}$. It is not too difficult to show that the exponential mixing of $f_t$ is equivalent to exponential mixing for $f'_t$, though we must first fix a measure on $U_{\tau, \Hol}$ to make sense of this.

\begin{remark}
	Since $\nu$ is an equilibrium state for a H\"older potential $\phi$, there are measures $\nu_i^s$ and $\nu_i^u$ on each of the strong stable and unstable segments $S_i$ and $U_i$ so that $\nu$ is absolutely continuous with respect to the product $\nu_i^u \times \nu_i^s \times dt$. Moreover, these measures can be chosen so that the Radon-Nikodym derivative of $\nu$ with respect to the product $\nu_i^u \times \nu_i^s \times dt$ is uniformly bounded above by a constant $K > 1$ and below by $K^{-1} < 1$. We will write $\nu^u$ and $\nu^s$ for the corresponding measures on $U$ and $S$ respectively, and suppose that they have been normalized so that $\nu_U(U) = \nu_S(S) = 1$. 
\end{remark}

\begin{remark}
	Up to replacing $\phi$ with a cohomologous function on $(\Sigma, \mathcal P)$, we can assume that $\phi$ is the extension of a H\"older potential $\phi^U$ on $U$. As a consequence, we may as well assume that $\nu^u$ is in fact an equilibrium state for a H\"older potential $\phi^U$ on $(U, \sigma)$.
	\label{cohomologous}
\end{remark}

These are classical results in thermodynamic formalism -- we refer the reader to \cite{leplaideur} and \cite[p.~87-91]{margulis} respectively for more details. We will in addition require the conditional measure $\nu^u$ to have a doubling property later on, in order to control the spectrum of our transfer operators using Dolgopyat's methods.

\begin{definition}
	We say that a measure $\nu^u$ has the \textit{doubling} or \textit{Federer} property, or is \textit{diametrically regular}, if for any $k > 1$ there is a uniform constant $C > 0$ so that
	\[
		\nu^u(B_{kr}(x)) < C \nu^u(B_r(x))
	\]
	for all $x \in U$ and $r > 0$.
\end{definition}

With the following lemma, we are reduced to establishing exponential mixing for the suspension flow $f'_t$ on the expanding model $U_{\tau,\Hol}$.

\begin{lemma}
	If $f'_t$ is exponentially mixing of order $k$ for functions in $C^1(U_{\tau, \Hol}, \bb C)$, then $f_t$ is exponentially mixing of order $k$ for functions in $C^1(M, \bb C)$.
\end{lemma}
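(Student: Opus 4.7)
The plan is to pull correlations of $f_t$ back through a H\"older semi-conjugacy to the full symbolic suspension, approximate the resulting observables along stable leaves, and then appeal to the hypothesized mixing on $U_{\tau,\Hol}$.

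First I would construct a measure-preserving H\"older semi-conjugacy $\Pi \colon \Sigma_{\tau,\Hol} \to M$, where $\Sigma_{\tau,\Hol} \coloneqq \Sigma \times \bb R \times F/\sim$ is the analogue of $U_{\tau,\Hol}$ built from the full Markov shift $(\Sigma, \mathcal P)$ rather than its expanding quotient $(U,\sigma)$. This is obtained by combining the Bowen--Ratner semi-conjugacy between $\Sigma\times\bb R/\sim$ and $N$ with the trivializations $\phi_i$ and the temporal holonomy cocycle $\Hol$. The map $\Pi$ intertwines $f'_t$ on $\Sigma_{\tau,\Hol}$ with $f_t$ on $M$ and pushes the natural product measure forward to $\mu$, so for $\varphi_i \in C^1(M,\bb C)$ with zero mean,
\begin{equation*}
	\int_M \varphi_0 \cdot (\varphi_1 \circ f_{t_1}) \cdots (\varphi_k \circ f_{t_k}) \, d\mu = \int_{\Sigma_{\tau,\Hol}} \tilde\varphi_0 \cdot (\tilde\varphi_1 \circ f'_{t_1}) \cdots (\tilde\varphi_k \circ f'_{t_k}) \, d\tilde\mu,
\end{equation*}
where $\tilde\varphi_i \coloneqq \varphi_i \circ \Pi$ are $C^1$ along the flow, fiber and unstable directions but only H\"older along the stable direction.

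Next I would descend from $\Sigma_{\tau,\Hol}$ to $U_{\tau,\Hol}$ via the factor map $q$ induced by $\proj_S \colon R \to U$, which intertwines $f'_t$ on the two spaces. To push observables through $q$, I would approximate each $\tilde\varphi_i$ by its average over the stable segment in each rectangle against the conditional measure $\nu^s$, yielding a function constant on stable fibers that factors through $q$ as a $C^1$ observable on $U_{\tau,\Hol}$. Since $f_t$ contracts stable leaves exponentially at a uniform rate $\lambda < 1$, the $C^0$ error in this stable averaging applied to $\tilde\varphi_i \circ f'_s$ is bounded by $C\lambda^s\|\varphi_i\|_{C^1}$. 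To ensure that every factor in the product is genuinely iterated (handling the case $t_0 = 0$), I would first shift times by $T = \gamma t_{\max}$ using $\tilde\mu$-invariance, for some small $\gamma \in (0,1)$, so that each iterate $\tilde\varphi_i \circ f'_{t_i + T}$ has amount of iteration comparable to $t_{\max}$.

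The main obstacle is a familiar tension: while the $C^0$ stable-averaging error decays like $\lambda^T$, the unstable derivative of the averaged iterate grows like $\lambda^{-(t_i+T)}$, so its $C^1(U_{\tau,\Hol})$-norm inflates and cuts into the hypothesized decay rate $r^{\max t_i}$. Calibrating $\gamma$ so that the growth of the unstable derivative is outweighed by the mixing rate from the hypothesis, while keeping $\lambda^T$ exponentially small in $t_{\max}$, yields an inequality of the form established in \eqref{expmixingzero}, with a (possibly smaller) exponential rate. Invoking Lemma \ref{expmixingregularity} then upgrades this to exponential mixing for $C^1$ observables (and indeed $C^\alpha$ observables), completing the reduction.
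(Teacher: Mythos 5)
Your overall strategy is viable and genuinely different from the paper's: the paper never passes through the full invertible suspension $\Sigma_{\tau,\Hol}$, but instead defines the observable on $U_{\tau,\Hol}$ directly by averaging the pullback of $\varphi$ through the bracket and trivialization, $\varphi_0(u,h,r) \coloneqq \int_S \varphi\bigl(f_r(\phi^{-1}([u,s],h(k)))\bigr)\,d\nu^s$ for $r \in [0,\tau(u))$, then uses $C^1$ regularity together with exponential contraction of stable leaves to compare $\int \varphi_t\,\psi_0$ to the original correlation, and finally invokes the local product structure of $\nu$. A key feature of that construction is that the time dependence lives in the composition with $f'_t$, not in the observable, so $\|\varphi_0\|_{C^1}$ is uniformly bounded by $\|\varphi\|_{C^1}$ and no calibration of an auxiliary time shift is needed; the resulting rate is consequently slightly sharper than what your route yields.

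There is a genuine gap in your proposal as written, however, and it is in the step where you apply the mixing hypothesis. After averaging each shifted iterate $\tilde\varphi_i \circ f'_{t_i+T}$ over stable fibers you obtain functions $\hat\varphi_i$ on $U_{\tau,\Hol}$, but the resulting integral $\int_{U_{\tau,\Hol}} \hat\varphi_0\,\hat\varphi_1\cdots\hat\varphi_k\,d\mu'$ is a plain product integral with \emph{no} compositions with $f'_{t_i}$ left in it — the flow has been entirely absorbed into the observables. The hypothesis bounds only quantities of the form $\int \Psi_0\,(\Psi_1\circ f'_{t_1})\cdots(\Psi_k\circ f'_{t_k})\,d\mu'$, so it does not apply directly, and there is no obvious way to peel $\hat\varphi_i$ back into the form $\Psi_i \circ f'_{t_i}$ since the semiflow is not invertible. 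Your estimate that the unstable derivative of the averaged iterate grows like $\lambda^{-(t_i+T)}$ reflects this confusion: that growth would render the argument vacuous even if the hypothesis did apply, since $\lambda^{-(t_i+T)}$ already swamps $r^{\max t_i}$. The fix is to average $\tilde\varphi_i \circ f'_T$ (a uniform shift $T$, not $t_i+T$) over stable fibers to obtain $\Psi_i \in C^1(U_{\tau,\Hol})$ with $\|\Psi_i\|_{C^1} \lesssim \lambda^{-T}\|\varphi_i\|_{C^1}$, observe that $\Psi_i \circ f'_{t_i}$ then agrees with $\tilde\varphi_i\circ f'_{t_i+T}$ up to $O(\lambda^T\|\varphi_i\|_{C^1})$ in $C^0$ because the stable fiber at the base of the iterate has been contracted by a factor $\lambda^T$, and apply the hypothesis to the genuine correlation $\int \Psi_0\,(\Psi_1\circ f'_{t_1})\cdots(\Psi_k\circ f'_{t_k})\,d\mu'$. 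With $T = \gamma t_{\max}$ and $\gamma$ small enough that $r\,\lambda^{-(k+1)\gamma} < 1$, your balancing then goes through, and the argument is sound.
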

\begin{proof}
	Once again, we will perform the argument in the case $k = 1$. The general case can be obtained by repeating this inductively.
	Given $\varphi, \psi \in C^1(M, \bb C)$ with $\int_M \varphi \, d\mu = \int_M \psi \, d\mu = 0$ and a fixed $k \in F$, we consider corresponding functions $\varphi_t, \psi_t \in C^1(U_{\tau,\Hol},\bb C)$ given by
	\begin{align*}
		\varphi_t (u, h, r) &\coloneqq \int_{S} \varphi\left( f_{t+r}\left( \phi^{-1}\left( [u,s], h(k) \right) \right) \right) \, d\nu^s\\
		\psi_t (u, h, r) &\coloneqq \int_{S} \psi\left( f_{t+r}\left( \phi^{-1}\left( [u,s], h(k) \right) \right) \right) \, d\nu^s
	\end{align*}
	for each $t$.
	Let $C_0q^t$ be the rate of contraction of $S$ under $g_t$. Since $\varphi$ is $C^1$, 
	\[
		\left| \varphi\left(f_{t+r}(\phi^{-1}([u,s],h(k))\right) - \varphi\left(f_{t+r}(\phi^{-1}([u,s_0],h(k))\right) \right| < C_0 q^t\|\varphi\|_{C^1}
	\]
	for any $s_0 \in S$. Of course, this yields
	\[
		\left| \int_S \varphi\left(f_{t+r}(\phi^{-1}([u,s],h(k))\right) \, d\nu^s - \int_S \varphi\left(f_{t+r}(\phi^{-1}([u,s_0],h(k))\right) \, d\nu^s \right| < C_0 q^t\|\varphi\|_{C^1} \nu^s(S)
	\]
	after simply integrating both sides with respect to $s$. This can be rewritten as
	\[
		\left| \int_S \varphi\left(f_{t+r}(\phi^{-1}([u,s],h(k))\right) \, d\nu^s - \varphi\left(f_{t+r}(\phi^{-1}([u,s_0],h(k))\right) \cdot \nu^s(S) \right| < C_0 q^t\|\varphi\|_{C^1} \nu^s(S)
	\]
	since $s_0$ is fixed. We then see quickly that the difference in the integrals
	\begin{equation}	
	\label{symbolicmixingeqn}
		\int_{U_{\tau, \Hol}} \left(\int_{S} \varphi\left( f_{t+r}\left( \phi^{-1}\left( [u,s], h(k) \right) \right) \right) \, d\nu^s(s) \right) \left(\int_{S} \psi\left( f_{r}\left( \phi^{-1}\left( [u,s'], h(k) \right) \right) \right) \, d\nu^s(s')\right) \, d\omega \, dr \, d\nu^u
	\end{equation}
	and
	\begin{equation}
		\label{symbolicmodeleqn}
		\nu^s(S)\int_{U_{\tau, \Hol}} \int_{S} \varphi\left( f_{t+r}\left( \phi^{-1}\left( [u,s_0],h(k) \right) \right) \right)\cdot \psi\left( f_{r}\left( \phi^{-1}\left( [u,s'], h(k) \right) \right) \right) \, d\nu^s(s')\, d\omega \, dr \, d\nu^u
	\end{equation}
	is at most $C_1q^t \|\varphi\|_{C^1} \|\psi\|_{C^0}$. But now, the same argument shows that
	\[
		\varphi\left(f_{t+r}\left(\phi^{-1}\left( [u,s_0],h(k) \right)\right)\right) \cdot \psi\left( f_r\left( \phi^{-1}\left( [u,s'],h(k) \right) \right) \right)
	\]
	and
	\[
		\varphi\left(f_{t+r}\left(\phi^{-1}\left( [u,s'],h(k) \right)\right)\right) \cdot \psi\left( f_r\left( \phi^{-1}\left( [u,s'],h(k) \right) \right) \right)
	\]
	must be within $C_2q^t \|\varphi\|_{C^1} \|\psi\|_{C^0}$ of each other. Hence,
	\begin{equation}
		\nu^s(S)\int_{U_{\tau, \Hol}} \int_{S} \varphi\left( f_{t+r}\left( \phi^{-1}\left( [u,s'],h(k) \right) \right) \right)\cdot \psi\left( f_{r}\left( \phi^{-1}\left( [u,s'], h(k) \right) \right) \right) \, d\nu^s(s')\, d\omega \, dr \, d\nu^u
	\end{equation}
	is within $C_3 q^t \|\varphi\|_{C^1} \|\psi\|_{C^0}$ of \eqref{symbolicmodeleqn}.
	From the local product structure of $\nu$, we see that this is within a constant multiplicative factor of $K$ of the integral
	\begin{equation}
		\label{actualmixingeqn}
		\int_M \varphi(f_t(x)) \cdot \psi(x) \, d\mu
	\end{equation}
	for any $\varphi$ and $\psi$.
	To conclude, we simply observe that if $\int_M \varphi \, d\mu = \int_M \psi \, d\mu = 0$, then we must have 
	\[
		\int_{U_{\tau, \Hol}} \varphi_t(u,h,r) \, d\omega \, dr \, d\nu^u = \int_{U_{\tau, \Hol}} \psi_t (u,h,r) \, d\omega \, dr \, d\nu^u = 0
	\]
	for any $t \in \bb R$. Moreover, the regularity of $\varphi_t$ and $\psi_t$ is determined by the regularity of the bracket operation $[\cdot,\cdot]$ -- since we assumed that the foliations were $C^1$, we see that $\varphi_t$ and $\psi_t$ must also be $C^1$.
	Hence, if $f'_t$ is exponentially mixing for functions in $C^1(U_{\tau,\Hol},\bb C)$, \eqref{symbolicmixingeqn} must decay exponentially in $t$, from which we conclude that \eqref{actualmixingeqn} must also decay exponentially.
\end{proof}

\subsection{Representation theory}

In this section, we recall some classical results from the representation theory and harmonic analysis of compact Lie groups; our primary references are \cite{fourierseries} and \cite{applebaum}.

Following \cite[\S 3.6]{winter}, our strategy is to decompose functions on $U_{\tau, \Hol}$ into components corresponding to irreducible representations of $G$. A function $\varphi \in C^1(U_{\tau, \Hol}, \bb C)$ can be viewed as a $C^1$ function $\tilde \varphi \colon U_{\tau} \to L^2(G)$ by setting $\tilde \varphi(u, r) \coloneqq \varphi(u, \cdot, r)$. 

Since $G$ is a compact, connected Lie group, we can decompose $\tilde \varphi(u, r) \in L^2(G)$ into isotypic components corresponding to irreducible representations of $G$ -- this is, of course, the classical Peter-Weyl theorem.

\begin{theorem}[Peter-Weyl]
	If $G$ is a compact, connected Lie group, then there is a decomposition
	\[
	L^2(G) = \bigoplus_\rho (V^\rho)^{\oplus \dim \rho}
	\]
where the sum is taken over pairwise non-isomorphic irreducible representations of $G$, and the $(V^\rho){\oplus \dim \rho}$ associated to non-isomorphic irreducible representations are pairwise orthogonal with respect to the standard inner product on $L^2(G)$.
\end{theorem}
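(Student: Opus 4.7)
The plan is to deduce the decomposition from the spectral theory of compact self-adjoint operators on $L^2(G)$, following the classical approach. The starting point is the (unitary) left regular representation $L \colon G \to \mathcal U(L^2(G))$ defined by $(L_g \varphi)(h) = \varphi(g^{-1}h)$. For a continuous function $f \in C(G)$ satisfying the symmetry condition $f(g^{-1}) = \overline{f(g)}$, I would consider the convolution operator $T_f \varphi \coloneqq f * \varphi$ on $L^2(G)$. The key observation is that since $G$ is compact, the Haar measure is finite, so $T_f$ has a square-integrable kernel and is therefore Hilbert-Schmidt (in particular compact), and the symmetry condition on $f$ makes $T_f$ self-adjoint. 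Crucially, $T_f$ commutes with every $L_g$ because Haar measure is left-invariant.

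Applying the spectral theorem to $T_f$, the space $L^2(G)$ decomposes as a Hilbert space direct sum of the finite-dimensional eigenspaces of $T_f$ (together with its kernel). Because $T_f$ commutes with the regular representation, each nonzero eigenspace is a finite-dimensional $G$-invariant subspace of $L^2(G)$. To see that the union of all such finite-dimensional $G$-invariant subspaces is dense in $L^2(G)$, I would choose an approximate identity: a sequence $f_n \in C(G)$ of symmetric, nonnegative functions with $\int_G f_n \, dg = 1$ and supports shrinking to the identity of $G$. Then $T_{f_n} \varphi \to \varphi$ in $L^2(G)$ for every $\varphi$, so every function can be approximated by elements of finite-dimensional invariant subspaces. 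Now each such finite-dimensional unitary representation decomposes as an orthogonal direct sum of irreducibles by an obvious induction on dimension (any nonzero invariant subspace of minimal dimension is irreducible, and its orthogonal complement is also invariant), and Schur's lemma guarantees that components corresponding to non-isomorphic irreducibles are orthogonal in $L^2(G)$.

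The main subtlety is identifying the multiplicity with which each irreducible $V^\rho$ appears, namely $\dim \rho$. For this, I would exploit the $G \times G$ bimodule structure: $L^2(G)$ carries commuting left- and right-regular actions of $G$, so its isotypic components under the left action are themselves right $G$-modules. Working with matrix coefficients $\pi_{ij}^\rho(g) = \langle \rho(g) e_j, e_i\rangle$ for an orthonormal basis of $V^\rho$, the Schur orthogonality relations show that these matrix coefficients form an orthogonal system; the $\rho$-isotypic component under the left action is spanned by the columns of $\pi^\rho$, yielding $\dim \rho$ copies of $V^\rho$. The only non-routine step is verifying that this accounts for all of $L^2(G)$, which follows from the density argument together with the fact that the linear span of matrix coefficients of irreducible representations is closed under multiplication (a consequence of the tensor product decomposition) and separates points, hence is dense in $C(G)$ by Stone--Weierstrass, hence dense in $L^2(G)$.
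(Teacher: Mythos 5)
The paper does not prove this theorem: it is stated as a classical fact and used without argument, with references to \cite{fourierseries} and \cite{applebaum} for background. So there is no paper proof to compare against. Your proposal is a complete and standard proof of the Peter--Weyl theorem, built from compact self-adjoint convolution operators and the spectral theorem, an approximate identity for density of finite-dimensional invariant subspaces, Schur's lemma for orthogonality, and Schur orthogonality of matrix coefficients together with Stone--Weierstrass for the multiplicity count and completeness. The overall architecture is sound and each step is correctly identified.

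One detail is backwards: with the left regular representation $(L_g\varphi)(h) = \varphi(g^{-1}h)$ and the usual convention $(f*\varphi)(h) = \int_G f(y)\varphi(y^{-1}h)\,dy$, the operator $\varphi \mapsto f*\varphi$ commutes with the \emph{right} regular representation, not the left; it is $\varphi \mapsto \varphi * f$ that intertwines $L_g$ with itself, precisely by the left-invariance of Haar measure you invoke. Since on a compact group Haar measure is bi-invariant and the argument is symmetric, the fix is harmless --- either switch to $T_f\varphi = \varphi * f$ or run the argument with the right regular representation throughout --- but as written the stated commutation fails. The remainder of the argument (self-adjointness from $f(g^{-1}) = \overline{f(g)}$, Hilbert--Schmidt from $\int\int |f(g^{-1}h)|^2\,dh\,dg = \|f\|_{L^2}^2 < \infty$, and the density and multiplicity steps) is correct once this orientation is fixed.
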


We fix such a decomposition and write $\tilde \varphi(u,r) = \sum_\rho \tilde \varphi^\rho(u, r)$ for the decomposition of $\tilde \varphi(u,r)$ obtained by projecting onto each $(V^\rho){\oplus \dim \rho}$. Abusing notation, we will use $\varphi$ to refer interchangeably to a function $U_\tau \to L^2(G)$ or to the function $U_{\tau, \Hol} \to \bb C$ -- there should be little ambiguity in either case.

For our later analysis, it will be helpful to consider the derived representation $d\rho$ of the Lie algebra $\mathfrak g$ of $G$ acting on $L^2(G)$, induced by the representation $\rho$ of $G$ on $L^2(G)$ -- see \cite[\S 2.5.1]{applebaum} for details. We will always assume that we have a fixed $\Ad$-invariant norm $\| \cdot \|_{\mathfrak g}$ on $\mathfrak g$.

\begin{definition}
	Given an irreducible representation $\rho \colon G \to \GL(V^\rho)$ of $G$ (where we view $V^\rho \subset L^2(G)$), we define the norm $\|\rho\|$ of $\rho$ to be the supremum
	\[
		\|\rho\| \coloneqq \sup_{\|X\|_{\mathfrak g} = 1} \|d\rho(X)\|_{L^2(G)}
	\]
	where $\|d\rho(X)\|_{L^2(G)}$ is the operator norm of $d\rho(X)$ viewed as an automorphism of $L^2(G)$.
	\label{derivativenorm}
\end{definition}

It is a classical fact that $\|\rho\|$ is finite, and can be bounded in terms of the highest weight associated to $\rho$.

\begin{proposition}
	Let $\rho$ be a nontrivial irreducible representation, and let $\lambda$ be its highest weight. There are uniform constants $C > 0$ and $m > 0$ so that
	\[
		\|\rho\| \leq C \lambda^m
	\]
	for all $X \in \mathfrak g$ with $\|X\|_{\mathfrak g} = 1$.
	\label{normexists}
\end{proposition}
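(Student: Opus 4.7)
The plan is to deduce the desired polynomial bound from the classical computation of the Casimir eigenvalue on an irreducible representation. The starting point is the observation that, since $G$ is compact and $V^\rho \subset L^2(G)$ inherits the (G-invariant) $L^2$-inner product, the representation $\rho$ is unitary and each operator $d\rho(X)$ is skew-Hermitian on $V^\rho$ for $X \in \mathfrak g$.

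First I would fix an orthonormal basis $X_1, \ldots, X_d$ of $\mathfrak g$ with respect to the $\Ad$-invariant inner product and form the Casimir element $\Omega \coloneqq -\sum_{i=1}^d X_i^2 \in U(\mathfrak g)$. The $\Ad$-invariance of the inner product ensures $\Omega$ is central in the universal enveloping algebra, so by Schur's lemma $d\rho(\Omega)$ acts on the irreducible $V^\rho$ by a scalar $c(\rho)$. Using a standard formula (for instance, Freudenthal's) on each simple factor of the semisimple part of $\mathfrak g$, together with the contribution $\|\lambda_Z\|^2$ of the abelian center (up to normalization), one obtains $c(\rho) = O(\|\lambda\|^2)$ with an implicit constant depending only on the structure of $\mathfrak g$ and the normalization of the inner product.

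Next, for any unit vector $v \in V^\rho$, skew-Hermiticity of the $d\rho(X_i)$ yields
\[
    \sum_{i=1}^d \|d\rho(X_i)v\|^2_{L^2(G)} = -\sum_i \langle d\rho(X_i)^2 v, v \rangle = \langle d\rho(\Omega) v, v\rangle = c(\rho).
\]
Writing an arbitrary unit vector $X = \sum a_i X_i \in \mathfrak g$, the Cauchy--Schwarz inequality then gives
\[
    \|d\rho(X)v\|^2 \leq \Bigl(\sum_i a_i^2\Bigr) \Bigl(\sum_i \|d\rho(X_i) v\|^2\Bigr) = c(\rho) \leq C \|\lambda\|^2,
\]
so that $\|\rho\| \leq C^{1/2} \|\lambda\|$. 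This is the desired estimate with $m = 1$ (any larger exponent is of course also acceptable).

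The main point requiring care is the treatment of the non-semisimple case, since $G$ is only assumed to be compact and connected and may have a nontrivial central torus. There the abelian part of the representation contributes to the Casimir eigenvalue by the squared norm of the central weight component, and one must combine this with the semisimple contribution to extract the $O(\|\lambda\|^2)$ bound. Conceptually this is not a serious obstacle, but making the decomposition explicit (and verifying that the constants depend only on $\mathfrak g$ and the fixed $\Ad$-invariant norm, not on $\rho$) is the one technical point worth emphasizing.
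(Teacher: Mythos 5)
Your proof is correct, and it takes a genuinely different (and more self-contained) route than the paper, which simply cites \cite[Theorem 3.4.1]{applebaum} and remarks that the Hilbert--Schmidt norm dominates the operator norm. You instead derive the estimate directly from the Casimir eigenvalue: since $d\rho(X_i)$ is skew-Hermitian for each element of an orthonormal basis of $\mathfrak g$, the identity $\sum_i \|d\rho(X_i)v\|^2 = \langle d\rho(\Omega)v, v\rangle = c(\rho)$ holds for any unit $v \in V^\rho$, and Cauchy--Schwarz then gives $\|d\rho(X)\|^2 \leq c(\rho)$ for every unit $X$. Since $c(\rho) = \langle \lambda, \lambda + 2\delta\rangle = O(\|\lambda\|^2)$ (with the central-torus contribution folded in as you note), this yields the clean exponent $m = 1$, which is stronger than what the proposition asserts and stronger than what the Hilbert--Schmidt route produces directly, since $\|d\rho(X)\|_{HS}^2 = (\dim V^\rho)\,c(\rho)/\dim \mathfrak g$ carries an extra factor of $\dim V^\rho$ that then has to be bounded polynomially in $\|\lambda\|$ via the Weyl dimension formula. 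The one cosmetic issue is that the statement writes $\lambda^m$ where $\|\lambda\|^m$ is clearly intended, which you correctly read through; and your flagged concern about the non-semisimple case is real but, as you say, handled by the standard decomposition $\mathfrak g = \mathfrak z \oplus [\mathfrak g, \mathfrak g]$ with the Casimir splitting accordingly.
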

\begin{proof}
	See \cite[Theorem 3.4.1]{applebaum}; note that the Hilbert-Schmidt norm is an upper bound for the operator norm.
\end{proof}

We will also require some particular results on the growth rate of $\|\rho\|$.

\begin{proposition}
	There is a constant $N > 0$ so that $\sum_\rho \|\rho\|^{-n}$ converges for any $n \geq N$.
	\label{seriesconverges}
\end{proposition}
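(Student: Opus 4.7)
The plan is to combine the Cartan-Weyl classification of irreducible representations of a compact connected Lie group with a lower bound on $\|\rho\|$ in terms of the highest weight. Throughout, let $T \subset G$ be a maximal torus with Lie algebra $\mathfrak t$, and write $r = \dim \mathfrak t$ for the rank of $G$.

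First, I would establish a lower bound $\|\rho\| \geq |\lambda|$, where $\lambda$ is the highest weight of $\rho$ and $|\cdot|$ denotes the norm on $\mathfrak t^*$ dual to the restriction of $\|\cdot\|_{\mathfrak g}$ to $\mathfrak t$. This is almost immediate from the definitions: if $v_\lambda \in V^\rho$ is a highest weight vector, then $d\rho(H) v_\lambda = i\lambda(H)\, v_\lambda$ for every $H \in \mathfrak t$, so $\|d\rho(H)\|_{L^2(G)} \geq |\lambda(H)|$. Taking the supremum over unit vectors $H \in \mathfrak t$ yields $\|\rho\| \geq |\lambda|$.

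Next, I would invoke the fact that nontrivial irreducible representations of $G$ correspond bijectively to nonzero dominant integral weights, which sit inside a rank-$r$ lattice $\Lambda \subset \mathfrak t^*$. In particular, the number of $\lambda \in \Lambda^+$ with $|\lambda| \in [2^k, 2^{k+1})$ grows at most like $O(2^{kr})$ as $k \to \infty$, by a standard lattice point count in Euclidean space.

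A dyadic decomposition then finishes the argument:
\[
\sum_{\rho} \|\rho\|^{-n} \leq \sum_{\lambda \in \Lambda^+ \setminus \{0\}} |\lambda|^{-n} \leq C \sum_{k=0}^\infty 2^{k(r-n)},
\]
which converges whenever $n > r$. Hence any integer $N > r$ suffices.

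The only step requiring genuine care is the lower bound $\|\rho\| \geq |\lambda|$; once this is in hand, the polynomial growth of dominant weights in a Euclidean ball combined with the dyadic sum is routine. Note that the upper bound $\|\rho\| \leq C \lambda^m$ provided by Proposition \ref{normexists} is not needed for this argument, though it confirms that $\|\rho\|$ and $|\lambda|$ are comparable up to polynomial factors.
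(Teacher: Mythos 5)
Your proof is correct and, importantly, more self-contained than the paper's, which simply cites a combination of \cite[Lemma 1.3]{fourierseries} and Proposition \ref{normexists}. You have put your finger on something real: the convergence of $\sum_\rho \|\rho\|^{-n}$ requires a \emph{lower} bound on $\|\rho\|$ in terms of the highest weight, and the upper bound $\|\rho\| \leq C\lambda^m$ from Proposition \ref{normexists} points the wrong way. Your observation that $d\rho(H)v_\lambda = i\lambda(H)v_\lambda$ for $H \in \mathfrak t$, hence $\|\rho\| \geq \sup_{\|H\|=1}\|d\rho(H)\| \geq |\lambda|$, is exactly the elementary lower bound that the argument actually needs, and the lattice-point count over dominant weights followed by a dyadic decomposition finishes it cleanly. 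In effect, what you supply is the content that \cite[Lemma 1.3]{fourierseries} presumably carries implicitly (the counting of dominant weights and the comparison between $\|\rho\|$ and $|\lambda|$), spelled out rather than cited. One small cosmetic point: your dyadic sum starting at $k=0$ tacitly assumes $|\lambda| \geq 1$ for all nonzero dominant weights; since the weight lattice is discrete, there are only finitely many nonzero $\lambda$ with $|\lambda| < 1$ and they contribute a finite amount, but you might note this explicitly (or rescale the norm) to avoid any ambiguity.
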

\begin{proof}
	This is a combination of \cite[Lemma 1.3]{fourierseries} and Proposition \ref{normexists}.
\end{proof}

\noindent And more generally, we can obtain decay estimates for the Fourier coefficients $\varphi^\rho$ associated to irreducible representations $\rho$.

\begin{theorem}
	There are constants $C, N > 0$ so that
	\[
		\|\rho\|^n \|\varphi^\rho\|_{L^2(G)} \leq {C \|\varphi\|_{C^n}}
	\]
	for all irreducible $\rho$, any $n > N$ and all $\varphi \in C^n(U_\tau, L^2(G))$.
	\label{reptheorylemma}
\end{theorem}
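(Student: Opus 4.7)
The plan is to build an $\Ad$-invariant second-order elliptic operator on $G$ whose eigenvalue on each isotypic component dominates $\|\rho\|^2$, and then to iterate it to trade smoothness for Fourier decay. Fix an orthonormal basis $X_1, \ldots, X_d$ of $\mathfrak g$ for the chosen $\Ad$-invariant norm, and set $\Delta \coloneqq -\sum_i X_i^2$, viewed as a differential operator on $C^\infty(G)$ via left-invariant vector fields. By $\Ad$-invariance $\Delta$ commutes with both regular representations on $L^2(G)$, and so by Schur's lemma it acts as a non-negative scalar $c_\rho$ on each isotypic summand $(V^\rho)^{\oplus \dim \rho}$ and commutes with the projection $\tilde\varphi \mapsto \tilde\varphi^\rho$.

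The first substantive step is the comparison $\|\rho\|^2 \leq c_\rho$. Unitarity of $\rho$ makes each $d\rho(X_i)$ skew-adjoint, so for any unit $v \in V^\rho$,
\[
c_\rho = \langle d\rho(\Delta) v, v\rangle = \sum_i \|d\rho(X_i) v\|^2,
\]
and for any $X = \sum a_i X_i$ with $\|X\|_{\mathfrak g} = 1$, the triangle inequality followed by Cauchy-Schwarz gives $\|d\rho(X) v\| \leq c_\rho^{1/2}$; supremizing over $v$ and $X$ yields the claim. Now $\Delta^k$ commutes with isotypic projection and acts there as multiplication by $c_\rho^k$, so pointwise in $(u, r)$,
\[
c_\rho^k \|\tilde\varphi^\rho(u, r)\|_{L^2(G)} = \|\Delta^k \tilde\varphi^\rho(u, r)\|_{L^2(G)} \leq \|\Delta^k \tilde\varphi(u, r)\|_{L^2(G)}.
\]
Via the orbit map $G \to F$ at a fixed basepoint $k_0 \in F$, each $X_i$ pulls back to a smooth fundamental vector field on $F$, so $\Delta^k \tilde\varphi(u, r)$ realizes a smooth order-$2k$ differential operator applied to $\varphi$ in the fiber direction of $U_{\tau,\Hol}$. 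Since Haar measure on $G$ pushes forward with bounded density to $\omega$ on $F$, this yields $\|\Delta^k \tilde\varphi(u, r)\|_{L^2(G)} \leq C_k \|\varphi\|_{C^{2k}}$ and hence the desired bound for even $n = 2k$.

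To reach all sufficiently large integers $n$, I would replace $\Delta^k$ by $(I + \Delta)^{n/2}$ and invoke the standard fact that on the compact Lie group $G$ the spectral norm $\|(I + \Delta)^{n/2}\varphi\|_{L^2(G)}$ is equivalent to the usual $H^n(G)$-norm, hence dominated by the $C^n$-norm of $\varphi$; since $(I + \Delta)^{n/2}$ commutes with isotypic projection and acts on $V^\rho$ by $(1 + c_\rho)^{n/2} \geq \|\rho\|^n$, the same argument delivers the estimate for any integer $n \geq 1$, and taking $N$ to match the threshold in Proposition \ref{seriesconverges} ensures compatibility with later applications. The principal technical point is the careful bookkeeping that identifies left-invariant derivatives on $G$ with fiberwise derivatives on $U_{\tau,\Hol}$; this translation is mediated cleanly by the orbit map $G \to F$ and the bounded density of Haar with respect to $\omega$, and is the only place where the specific structure of the $G$-action on the fiber $F$ enters the argument.
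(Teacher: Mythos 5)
The paper's own ``proof'' of this lemma is a one-line citation to Sugiura's theorem on Fourier series on compact Lie groups (\cite[Theorem 1]{fourierseries}), so there is no internal argument to compare against; your self-contained proof reproduces, correctly, the standard Casimir/Laplacian argument underlying that cited result: construct the bi-invariant Laplacian $\Delta$, use Schur to get the scalar $c_\rho$, compare $c_\rho \geq \|\rho\|^2$ via skew-adjointness of $d\rho$, and iterate to convert regularity into Fourier decay. Two minor points. First, the sentence about the orbit map is phrased backwards: left-invariant vector fields on $G$ do not in general descend to $F = G/\Stab(k_0)$ (they are not right-$\Stab(k_0)$-invariant); it is the fundamental vector fields of the left $G$-action on $F$ — i.e.\ the images of right-invariant fields — that descend. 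Bi-invariance of $\Delta$ makes this immaterial, and the cleaner bookkeeping is simply to work entirely in $L^2(G)$ and note that the pullback map $C^{2k}(F) \to C^{2k}(G)$ induced by the orbit map is bounded; descending vector fields to $F$ is unnecessary. Second, your constants $C_k$ (and the Sobolev-equivalence constants for $(I+\Delta)^{n/2}$) necessarily grow with $n$, so the argument yields an $n$-dependent constant. The theorem as printed asks for a single $C$ valid for all $n > N$; this is a misstatement in the paper itself rather than a gap in your reasoning — the lemma is only ever invoked at a single fixed $n = N+1$ in the proof of Theorem \ref{spectralgapmixing}, and Sugiura's original result likewise has an $n$-dependent constant — but you should be aware that the literal uniformity claim is not what your proof establishes.
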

\begin{proof}
	This is contained in the proof of \cite[Theorem 1]{fourierseries}.
\end{proof}

\section{Twisted transfer operators}
\label{transferoperators}
In this section, we will define Dolgopyat's `twisted' transfer operators, and show how the spectral bounds we intend to obtain for these operators lead to correlation decay estimates for the expanding suspension semi-flow $f'_t$ constructed in the previous section. 

Recall that we can view a smooth function $\psi \in C^1(U_{\tau, \Hol}, \bb C)$ as a function $\psi \in C^1(U_\tau, L^2(G))$. We can integrate out the time variable to get
\[
	\tilde\psi(u) \coloneqq \int_0^{\tau(u)} \psi(u,r) \, dr
\]
in $C^1(U, L^2(G))$; this is the space on which we would like to define our operators. The advantages of working with smooth (as opposed to H\"older) functions will become clear in \S \ref{transopsec}, but we will need to assume that $\varsigma$ is $C^1$ for most of our arguments. We will explain how to modify our proof to deal with the general case where $\varsigma$ is H\"older in Corollary \ref{holderpotentialapprox}, using a standard approximation argument.

Let $\rho$ be an irreducible representation of $G$ acting on an isotypic component $V^\rho \subset L^2(G)$, and fix $z \in \bb C$. We define the {transfer operator} $\tilde {\mathcal L}_{z,\rho} \colon C^1(U,V^\rho) \to C^1(U,V^\rho)$ by
	\[
		\left( \tilde {\mathcal L}_{z,\rho} \varphi \right)(u) \coloneqq \sum_{\sigma(u') = u} e^{\varsigma_z(u')} \left( \rho(\Hol(u')) \cdot \varphi(u') \right)
	\]
	where $\varsigma_z$ is the potential on the one-sided Markov model $(\Sigma^+, \sigma)$ obtained as the restriction of the potential
	\[
		\int_0^{\tau(u)} (\varsigma \circ p)(u,s,t) \, dt - z \cdot \tau(u,s)
	\]
	defined on $(\Sigma, \mathcal P)$. Note that $\varsigma_z$ is well-defined in light of Remark \ref{cohomologous}, where we assumed that both $\alpha$ and $\tau$ are constant in $s$. It is also worth remarking that both $\tau$ and $p$ are $C^1$, since we assumed that the strong stable and unstable foliations of $g_t$ were $C^1$. As a consequence, $\varsigma_z$ and hence $\tilde {\mathcal L}_{z,\rho}\varphi$ are both $C^1$, since we have restricted ourselves to the case where $\varsigma$ is smooth. 

Let us recall some classical results of thermodynamic formalism; for a slightly more detailed treatment, we refer the reader to \cite[p.~87-91]{margulis}. Let $P(\varsigma)$ be the topological pressure of $\varsigma$ for the map $g_1$. By the Ruelle-Perron-Frobenius theorem, the operator $\tilde {\mathcal L}_{P(\varsigma),0}$ associated to the trivial representation $\rho = 0$ has a unique positive eigenvector $\varphi_\varsigma \in C^1(U, \bb R)$ with eigenvalue $e^{P(\varsigma)}$. 

Recall that $\nu^u$ is an equilibrium measure for the restriction of the potential $\varsigma$ to $(\Sigma^+, \sigma)$, which by construction has the same topological pressure $P(\varsigma)$. By the Lanford-Ruelle variational principle, this means that
\[
	e^{P(\varsigma)} \int_U \varphi \, d\nu^u = \int_U \tilde {\mathcal L}_{P(\varsigma),0}\varphi \, d\nu^u
\]
for all $\varphi \in C^1(U, V^\rho)$. It will be convenient to renormalize $\tilde {\mathcal L}_{P(\varsigma),0}\varphi$ so that it preserves the measure $\nu^u$: let $\mathcal L_{z,\rho}$ be the operator defined by
\[
	\left(\mathcal L_{z,\rho} \varphi\right)(u) \coloneqq \varphi_\varsigma(u)\frac{\left( \tilde {\mathcal L}_{z,\rho} (\varphi \cdot \varphi_\varsigma^{-1}) \right)(u)}{e^{P(\varsigma)}}
\]
for all $\varphi \in C^1(U, V^\rho)$.
\begin{remark}
	With the renormalizations above, we have
	\[
		\int_U \varphi \, d\nu^u = \int_U \mathcal L_{P(\varsigma),0}\varphi \, d\nu^u
	\]
	for all $\varphi \in C^1(U,\bb R)$.
	\label{rpflr}
\end{remark}

We can alternatively write
\[
	\left(\mathcal L_{z,\rho}\varphi\right)(u) = \sum_{\sigma(u')=u} e^{\alpha_z(u')} (\rho(\Hol(u')) \cdot \varphi(u'))
\]
where we set
\[
	\alpha_z(u) \coloneqq \int_0^{\tau(u)} (\varsigma \circ p) (u,s,t) \, dt - z \cdot \tau(u,s) - \log (\varphi_\varsigma(u)) + \log (\varphi_\varsigma(\sigma(u))) - \log P(\varsigma)
\]
for all $u \in U$ -- note that the positivity of $\varphi_\varsigma$ is required to ensure that $\log (\varphi_\varsigma(u))$ is well-defined. It will be helpful to have a similar formulation for the iterates
\[
	\left(\mathcal L_{z,\rho}^n\varphi\right)(u) = \sum_{\sigma^n(u')=u} e^{\alpha^{(n)}_z(u')} (\rho(\Hol^{(n)}(u')) \cdot \varphi(u'))
\]
with
\[
	\alpha^{(n)}_z(u) \coloneqq \int_0^{\tau^{(n)}(u)} (\varsigma \circ p) (u,s,t) \, dt - z \cdot \tau^{(n)}(u,s) - \log (\varphi_\varsigma^{(n)}(u)) + \log (\varphi_\varsigma^{(n)}(\sigma(u))) - n\log P(\varsigma)
\]
for all $u \in U$. Here, we write
\[
	\varphi_\varsigma^{(n)}(u) \coloneqq \prod_{i=0}^{n-1} \varphi_\varsigma\left( \varsigma^i(u) \right)
\]
for the $n^{\rm th}$ ergodic product along $\sigma$.

Our overarching goal is to establish spectral bounds for these operators; in \S\ref{transopsec}, we will ultimately prove:

\begin{theorem}
	\label{spectralgap}
	There are constants $C > 0$ and $r < 1$ so that
	\[
	\| \mathcal L^n_{z,\rho} \varphi \|_{L^2(\nu^u)} \leq C \|\varphi\|_{C^1} r^n
	\]
	for all $\varphi \in C^1(U, V^\rho)$, all nontrivial irreducible representations $\rho$ of $G$, and any $z \in \bb C$ with $|\Re(z) - P(\varsigma)| < 1$.
\end{theorem}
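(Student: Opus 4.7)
The plan is the Dolgopyat--Winter strategy: split the parameter space $\{(z,\rho)\}$ into a compact ``low-frequency'' regime $b := \max(|\Im z|, \|\rho\|) \leq b_0$ and its unbounded complement. In the low-frequency regime, only finitely many irreducible $\rho$ satisfy $\|\rho\| \leq b_0$ by Proposition \ref{seriesconverges}, and $z$ varies over a compact slab, so the $\mathcal L_{z,\rho}$ form a norm-continuous family on $C^1(U,V^\rho)$. For each such nontrivial $\rho$ at $\Re z = P(\varsigma)$, a peripheral eigenvalue of $\mathcal L_{z,\rho}$ would, by a Ruelle--Perron--Frobenius maximum principle, force the cocycle $\rho \circ \Hol$ to be cohomologous to a constant; local $G$-accessibility rules this out exactly as in \cite[\S 3]{dolgopyatextension}. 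A perturbation-plus-compactness argument then yields uniform exponential decay throughout the low-frequency regime.

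The remainder of the argument concerns the high-frequency regime $b \geq b_0$. Work with the Dolgopyat norm
\[
	\|\varphi\|_{1,b} := \|\varphi\|_{C^0} + \frac{1}{b}\|\varphi\|_{C^1},
\]
and establish a Lasota--Yorke inequality $\|\mathcal L^n_{z,\rho}\varphi\|_{1,b} \leq C_1 \|\varphi\|_{C^0} + \vartheta^n \|\varphi\|_{1,b}$ with $\vartheta < 1$, exploiting the uniform expansion of $\sigma$ together with the $C^1$ regularity of $\alpha_z$, $\tau$, and $\Hol$ (the $1/b$ factor absorbs the derivatives of the oscillatory phase and the twist). This reduces the theorem to a uniform $L^2(\nu^u)$-contraction of $\mathcal L^{n_0}_{z,\rho}$ on a suitable invariant cone. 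Following \cite{dolgopyatmixing}, introduce a cone $\mathcal K_{a,b}$ of positive weights $H \in C^1(U,\bb R)$ with $|\grad \log H| \leq a b$, and a family of Dolgopyat operators $\mathcal N_{a,b,J}$ parametrized by admissible sets $J$ of inverse branches of $\sigma^{n_0}$ that damp the contribution of branches in $J$ by a fixed factor $\eta < 1$. It suffices to prove: \emph{(i)} for every $\varphi$ with $\|\varphi\| \leq H \in \mathcal K_{a,b}$, one can choose admissible $J = J(H,\varphi)$ so that $\|\mathcal L^{n_0}_{z,\rho}\varphi\| \leq \mathcal N_{a,b,J}H$ pointwise and $\mathcal N_{a,b,J}H \in \mathcal K_{a,b}$; and \emph{(ii)} $\mathcal N_{a,b,J}$ is a strict $L^2(\nu^u)$-contraction, uniformly in $b \geq b_0$ and admissible $J$. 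Iterating (i) and (ii) and combining with the Lasota--Yorke bound yields the claim.

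The $L^2$-contraction in (ii) is an averaging estimate that leverages the doubling property of $\nu^u$ in the style of \cite[\S 7]{dolgopyatmixing}. The hard part --- and the point at which local $G$-accessibility enters --- is the pointwise cancellation (i): for each $u$ in a family of rectangles at scale $\sim 1/b$, one must exhibit pairs of preimages $u_1', u_2' \in \sigma^{-n_0}(u)$ for which
\[
	\Bigl\| e^{-i\Im(z)\tau^{(n_0)}(u_1')}\rho\bigl(\Hol^{(n_0)}(u_1')\bigr) - e^{-i\Im(z)\tau^{(n_0)}(u_2')}\rho\bigl(\Hol^{(n_0)}(u_2')\bigr) \Bigr\|
\]
is bounded below on the unit sphere of $V^\rho$, uniformly in $\rho$ and $b$. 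This is the uniform non-integrability statement for the combined cocycle $(\tau, \Hol) \colon U \to \bb R \oplus \mathfrak g$. Local $G$-accessibility forces Dolgopyat's infinitesimal transitivity group of this cocycle to span the full Lie algebra $\bb R \oplus \mathfrak g$, while the assumed $C^1$ regularity of the strong stable and unstable foliations makes $(\tau^{(n_0)}, \Hol^{(n_0)})$ jointly $C^1$ with non-degenerate partial derivatives along strong-unstable directions. Integrating the infinitesimal transitivity against these derivatives yields the required uniform bilinear lower bound, and feeding this into the combinatorial scheme of \cite{dolgopyatmixing, dolgopyatextension} for selecting admissible damping sets $J$ delivers (i) and closes the argument.
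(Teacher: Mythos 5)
Your sketch follows the recognizable Dolgopyat--Winter template, but it departs from the paper's actual execution in two structural ways, and there is one conceptual imprecision worth flagging.

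\textbf{Different route.} You split the parameter space into a compact low-frequency block (handled by an RPF peripheral-eigenvalue argument plus compactness) and a high-frequency regime (handled via the Dolgopyat norm $\|\varphi\|_{1,b}$, a Lasota--Yorke inequality, an invariant cone, and a family of Dolgopyat operators $\mathcal N_{a,b,J}$). The paper does not perform any explicit low/high split or introduce a Dolgopyat norm. Instead it works throughout with the weight classes $\mathcal K_C$ of positive functions with $C$-Lipschitz logarithm, scales the working radius $\delta$ inversely with $(1+|\Im z|)\|\rho\|$ (Lemma \ref{alternative}), proves a single pointwise cancellation estimate (Lemma \ref{betalemma}) and a bump-function contraction estimate using the doubling property (Lemma \ref{mainlemma}), and shows that the class $\mathcal K_{C(1+|\Im z|)\|\rho\|}$ is invariant under the transfer operator (Proposition \ref{uniformc}). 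Iterating these gives the bound with constants uniform in $\rho$ and $z$ in one stroke; the Hölder-potential case is recovered afterwards by a separate approximation (Corollary \ref{holderpotentialapprox}). Your approach would likely also work, but you would then have to actually supply the Lasota--Yorke estimate, the invariance of the Dolgopyat cone, and the quasi-compactness needed for the low-frequency compactness argument, none of which appear in the paper.

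\textbf{Conceptual gap.} You write that local $G$-accessibility ``forces Dolgopyat's infinitesimal transitivity group of \emph{this} cocycle to span the full Lie algebra $\bb R \oplus \mathfrak g$.'' This is not what the hypothesis delivers. Local $G$-accessibility controls only the $\mathfrak g$-component: the paper's non-integrability estimate (Theorem \ref{nli}) is a statement purely about the differentials of the symbolic holonomies $\Hol^{(n)}$ in $\mathfrak g$, and produces the lower bound $\epsilon\|\rho\|$ from the derived representation $d\rho$. The $\bb R$-component of the combined cocycle $(\tau, \Hol)$ is controlled by the uniform non-integrability of the strong stable and unstable foliations of the base Anosov flow, which is a separate standing hypothesis (the flow is assumed to be in the class treated in \cite{dolgopyatmixing}); it does not follow from $G$-accessibility. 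Keeping the two sources of non-integrability distinct is in fact one of the main structural points of the argument, and conflating them obscures where each hypothesis is used.
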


\noindent The remainder of this section will be devoted to showing how we obtain Theorem \ref{maintheorem} from Theorem \ref{spectralgap}. 

	Given $\varphi_0, \ldots, \varphi_k \in C^1(U_{\tau, \Hol}, \bb C)$ with $\int_{U_{\tau, \Hol}} \varphi_i \, d\nu^u \, d\omega \, dr = 0$, let 
	\[
		\beta_k(t_1, \ldots, t_k) \coloneqq \int_U \int_0^{\tau(u)} \int_G \varphi_0(u, h, r) \left( \prod_{i=1}^k \varphi_i(u, h, r + t_i) \right) \, d\omega \, dr \, d\nu^u
	\]
	be the $k^{\rm th}$ correlation function $\beta_k \colon \bb R^k_+ \to \bb R$. In order to show that $\beta_k$ decays exponentially in $t_1, \ldots, t_k$, we will show that the integral defining its Laplace transform $\hat \beta(\xi_1, \ldots, \xi_k)$ converges absolutely for some fixed values of $\xi_1, \ldots, \xi_k$. The following lemma expresses the Laplace transform in terms to the transfer operators we have just defined, the proof of which consists almost entirely of elementary integral manipulations.

We will be somewhat cavalier in interchanging sums and integrals, though this is eventually justified as the final expression we obtain is absolutely convergent.

\begin{lemma}
	Given $\varphi_0, \ldots, \varphi_k \in C^1(U_{\tau, \Hol}, \bb C)$ as above with $\int_{U_{\tau, \Hol}} \varphi_i \, d\nu^u \, d\omega \, dr = 0$, we can bound the Laplace transform of the $k^{th}$-order correlation by
	\[
		\left|\hat\beta_k(\xi_1, \ldots, \xi_k)\right| \; \leq \sum_\rho \sum_{n_1, \ldots, n_k=1}^\infty \int_U \left(\mathcal L^{\max n_j}_{\rho, P(\varsigma)+\xi}\left|\hat\varphi^\rho_{0,-\xi}\right|\right)(u) \left( \prod_{i=1}^k \|\hat\varphi_{i,\xi_i}\|_{C^0} \right) \, d\nu^u
	\]
	for $\Re(\xi_1), \ldots, \Re(\xi_k) < 0$. Here, we use $\hat\varphi_{i, \xi_i}$ to denote the function
	\[
		\hat\varphi_{i,\xi_i}(u, h) \coloneqq \int_0^{\tau(u)} \varphi_i(u, h, t_i) e^{-\xi_it_i} \, dt_i
	\]
	and $\hat \beta_k$ to denote the Laplace transform
	\[
		\hat \beta_k (\xi_1, \ldots, \xi_k) \coloneqq \int_0^\infty \cdots \int_0^\infty \beta_k(t_1, \ldots, t_k) e^{-(\xi_1 t_1 + \ldots + \xi_kt_k)} \, dt_1 \, \ldots \, dt_k
	\]
	for $\xi_i \in \bb R$.
	\label{laplacetransform}
\end{lemma}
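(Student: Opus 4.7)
The plan is to unwind the definition of $\hat\beta_k$, use the suspension structure of $U_{\tau,\Hol}$ to decompose the time integrals into sums over iterates of the base map $\sigma$, and identify the resulting expression with iterates of the twisted transfer operators applied to the Peter--Weyl components of $\hat\varphi_0$.

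First, I would substitute the definition of $\beta_k$ into the Laplace transform and interchange orders of integration (justifying this a posteriori from the absolute convergence of the final bound). For each $i = 1, \ldots, k$, I would partition the $t_i$-integral over $[0,\infty)$ according to the unique $n_i \geq 0$ for which $r + t_i \in [\tau^{(n_i)}(u), \tau^{(n_i+1)}(u))$ and substitute $t_i' = r + t_i - \tau^{(n_i)}(u) \in [0, \tau(\sigma^{n_i}(u)))$. Applying the defining identification of $U_{\tau,\Hol}$ iteratively gives
\[
\varphi_i(u, h, r + t_i) = \varphi_i\bigl(\sigma^{n_i}(u),\; \Hol^{(n_i)}(u)(h),\; t_i'\bigr),
\]
while $e^{-\xi_i t_i} = e^{\xi_i r}\, e^{-\xi_i \tau^{(n_i)}(u)}\, e^{-\xi_i t_i'}$. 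Integrating in $t_i'$ produces $\hat\varphi_{i,\xi_i}$ evaluated at the appropriate point, and integrating in $r$ combines $\varphi_0(u, h, r)$ with $e^{\sum_i \xi_i r}$ into $\hat\varphi_{0,-\xi}(u,h)$ where $\xi = \sum_i \xi_i$.

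Next, I would apply the Peter--Weyl decomposition to $\hat\varphi_{0,-\xi}(u,\cdot)$, writing it as $\sum_\rho \hat\varphi^\rho_{0,-\xi}(u,\cdot)$, pass to absolute values, and use $|\hat\varphi_{i,\xi_i}(\cdot)| \leq \|\hat\varphi_{i,\xi_i}\|_{C^0}$ for $i = 1, \ldots, k$ together with unitarity of $\rho$ to discard the holonomy action on these factors. Setting $n = \max_j n_j$ and using $\tau^{(n_i)}(u) \leq \tau^{(n)}(u)$ with $\Re\xi_i < 0$, I would majorize $|\prod_i e^{-\xi_i \tau^{(n_i)}(u)}| \leq e^{-\Re\xi\cdot\tau^{(n)}(u)}$. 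The final identification then relies on the iterated adjoint relation
\[
\int_U (\mathcal L^n_{\rho, z}\psi)(u)\, d\nu^u = \int_U e^{(P(\varsigma)-z)\tau^{(n)}(u)}\, \rho(\Hol^{(n)}(u))\, \psi(u)\, d\nu^u,
\]
obtained by iterating the identity $\mathcal L_{z,\rho} = \mathcal L_{P(\varsigma),0}\bigl(e^{(P(\varsigma)-z)\tau}\rho(\Hol)\,\cdot\,\bigr)$ and invoking the Ruelle--Perron--Frobenius invariance in Remark \ref{rpflr}. Specializing $z = P(\varsigma) + \xi$ and $\psi = |\hat\varphi^\rho_{0,-\xi}|$ (interpreted as the pointwise $V^\rho$-norm, so that $\rho$ acts trivially after taking absolute values) recovers the claimed bound.

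The main technical obstacle is this last step: carefully aligning the holonomies, potentials, and iteration counts so that the integrand matches the transfer operator formula, in particular handling the mismatch among the different $n_i$ via the $\max n_j$ majorization and clarifying the interpretation of $|\hat\varphi^\rho_{0,-\xi}|$ as input to $\mathcal L^n_{\rho,z}$. The interchange of sums and integrals is justified a posteriori once the resulting bound is seen to converge absolutely in the strip $|\Re(z) - P(\varsigma)| < 1$, via Theorem \ref{spectralgap} combined with the growth bounds of Proposition \ref{seriesconverges}.
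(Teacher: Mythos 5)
Your proof follows essentially the same route as the paper: expand the Laplace transform, split each $t_i$-integral into fundamental domains $[\tau^{(n_i)}(u), \tau^{(n_i+1)}(u))$ of the suspension, push forward via the identification in $U_{\tau,\Hol}$, integrate out $t_i$ and $r$ to produce the $\hat\varphi$ factors, decompose $\hat\varphi_{0,-\xi}$ by Peter--Weyl, and majorize $e^{-\Re\xi_i\tau^{(n_i)}}$ by $e^{-\Re\xi\cdot\tau^{(\max n_j)}}$. The only structural difference is cosmetic: you phrase the final identification via the adjoint identity $\int_U \mathcal L^n_{z,\rho}\psi\,d\nu^u = \int_U e^{(P(\varsigma)-z)\tau^{(n)}}\rho(\Hol^{(n)})\psi\,d\nu^u$ (iterating the cohomological relation and invoking Remark~\ref{rpflr}), whereas the paper applies $\mathcal L^{\max n_j}_{P(\varsigma),0}$ directly to the integrand and recognizes the twisted operator in the resulting preimage sum; the two are arithmetically the same calculation read in opposite directions.

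===


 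Here's the key step of the paper's proof, so you can see the comparison concretely: after the change of variables and the bi-invariant shift $h \mapsto \Hol^{(\max n_j)}(u)\circ h$, the inner $\nu^u$-integral is
\[
\int_U \hat\varphi_{0,-\xi}\bigl(u, \Hol^{(\max n_j)}(u)\circ h\bigr)\left(\prod_{i=1}^k\hat\varphi_{i,\xi_i}(\cdots)\,e^{-\xi_i\tau^{(n_i)}(u)}\right)d\nu^u,
\]
to whose integrand the paper applies $\mathcal L_{P(\varsigma),0}$ a total of $\max n_j$ times using measure invariance, producing a sum over $\sigma^{\max n_j}$-preimages weighted by $e^{\alpha^{(\max n_j)}_{P(\varsigma)}}$; bounding, peeling off $\|\hat\varphi_{i,\xi_i}\|_{C^0}$, and inserting Peter--Weyl then exhibits $\mathcal L^{\max n_j}_{\rho,P(\varsigma)+\Re\xi}|\hat\varphi^\rho_{0,-\xi}|$. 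Your adjoint-relation shortcut reaches the same expression; just be aware that the notation $\mathcal L^{\max n_j}_{\rho,\cdot}|\hat\varphi^\rho_{0,-\xi}|$ is an abuse (the pointwise absolute value is no longer in $V^\rho$), which you noted correctly in your parenthetical, and that the sign of the holonomy in your $\varphi_i(\sigma^{n_i}(u),\Hol^{(n_i)}(u)(h),t_i')$ versus the paper's $(\Hol^{(n_i)}(u))^{-1}\circ h$ is immaterial once $\|\hat\varphi_{i,\xi_i}\|_{C^0}$ is taken.
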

\begin{proof}
	By definition, the Laplace transform $\hat\beta_k(\xi_1, \ldots, \xi_k)$ of $\beta_k(t_1, \ldots, t_k)$ is given by
	\begin{equation}
		\int_0^\infty \cdots \int_0^\infty \int_U \int_{\max(0,\tau(u)-t_i)}^{\tau(u)}\int_G \varphi_0(u,h,r) \left( \prod_{i=1}^k \varphi_i(u,h,r+t_i) e^{-\xi_i t_i} \right) \, d\omega \, dr \, d\nu^u \, dt_1 \, \cdots \, dt_k
		\label{laplacetransformeq}
	\end{equation}
	for $\xi_1, \ldots, \xi_k \in \bb C$. Since the systems of inequalities
	\begin{center}
	\begin{minipage}[]{0.3\linewidth}
		\begin{empheq}[left=\empheqlbrace, right=\empheqrbrace]{align*}
			0 <\; &r < \tau(u)\\
			\tau(u) - t_i < \;&r\\
			0 < \;&t_i
		\end{empheq}
	\end{minipage} and 
	\begin{minipage}[]{0.3\linewidth}
		\begin{empheq}[left=\empheqlbrace, right=\empheqrbrace]{align*}
			0 < \; &r < \tau(u)\\
			\tau(u) < \; &r + t_i\\
			0 < \; &t_i
		\end{empheq}
	\end{minipage}
\end{center}
are obviously equivalent, we can rewrite \eqref{laplacetransformeq} as
	\[
		\int_U \int_G \int_0^{\tau(u)} \int_{\tau(u)}^\infty \cdots \int_{\tau(u)}^\infty \varphi_0(u, h, r) \left( \prod_{i=1}^k \varphi_i(u, h, t_i)e^{-\xi_i(t_i-r)} \right) \, dt_1 \, \cdots \, dt_k \, dr \, d\omega \, d\nu^u
	\]
	by reparametrizing the domain of integration.
	Let us focus on the $k$ innermost integrals for now, where we can break up 
	\[
		\int_{\tau(u)}^\infty \cdots \int_{\tau(u)}^\infty \varphi_0(u, h, r) \left( \prod_{i=1}^k \varphi_i(u, h, t_i)e^{-\xi_i(t_i-r)} \right) \, dt_1 \, \cdots \, dt_k 
	\]
	into a sum of integrals
	\[
		\sum_{n_1, \ldots, n_k = 1}^\infty \int_{\tau^{(n_1)}(u)}^{\tau^{(n_1 + 1)}(u)} \cdots \int_{\tau^{(n_k)}(u)}^{\tau^{(n_k+1)}(u)} \varphi_0(u, h, r) \left( \prod_{i=1}^k \varphi_i(u, h, t_i)e^{-\xi_i(t_i-r)} \right) \, dt_1 \, \cdots \, dt_k 
	\]
	over intervals of the form $[\tau^{(n)}(u), \tau^{(n+1)}(u)]$. Of course, we can rewrite this as 
	\[
		\sum_{n_1, \ldots, n_k = 1}^\infty \int_{0}^{\tau(\sigma^{n_1}(u))} \cdots \int_{0}^{\tau(\sigma^{n_k}(u))} \varphi_0(u, h, r) \left( \prod_{i=1}^k \varphi_i(u, h, t_i + \tau^{(n_i)}(u))e^{-\xi_i(t_i-r + \tau^{(n_i)}(u))} \right) \, dt_1 \, \cdots \, dt_k 
	\]
	by changing variables, replacing $t_i$ with $t_i + \tau^{(n_i)}(u)$.
	Now, note that we can rewrite the integrand
	\[
		\varphi_0(u, h, r) \left( \prod_{i=1}^k \varphi_i\left(u, h, t_i + \tau^{(n_i)}(u)\right)  e^{-\xi_i(t_i - r + \tau^{(n_i)}(u))} \right)
	\]
	as
	\begin{equation}
		\varphi_0(u, h, r) \left( \prod_{i=1}^k\varphi_i\left( \sigma^{n_i}(u), \left( \Hol^{(n_i)}(u) \right)^{-1} \circ h, t_i \right) e^{-\xi_i(t_i - r + \tau^{(n_i)}(u))} \right)
		\label{integrand}
	\end{equation}
	using the identifications we made in constructing $U_{\tau, \Hol}$.
	If we integrate \eqref{integrand} with respect to $t_1$ through $t_k$, it becomes
	\begin{equation}
		\varphi_0(u,h,r) e^{r\xi} \left( \prod_{i=1}^k \hat\varphi_{i,\xi_i}\left( \sigma^{n_i}(u), \left( \Hol^{(n_i)}(u) \right)^{-1} \circ h \right) e^{-\xi_i \tau^{(n_i)}(u)} \right)
		\label{intintegrand}
	\end{equation}
	where we set $\xi \coloneqq \displaystyle \smash{\sum_{i=1}^k} \xi_i$. Similarly, integrating \eqref{intintegrand} with respect to $r$ yields
	\[
		\hat\varphi_{0,-\xi}(u,h) \left( \prod_{i=1}^k \hat\varphi_{i,\xi_i}\left( \sigma^{n_i}(u), \left( \Hol^{(n_i)}(u) \right)^{-1} \circ h \right) e^{-\xi_i \tau^{(n_i)}(u)} \right)
	\]
	and so \eqref{laplacetransformeq} becomes
	\begin{equation}
		\sum_{n_1, \ldots, n_k=1}^\infty \int_U \int_G \hat\varphi_{0,-\xi}(u,h) \left( \prod_{i=1}^k \hat\varphi_{i,\xi_i}\left( \sigma^{n_i}(u), \left( \Hol^{(n_i)}(u) \right)^{-1} \circ h \right) e^{-\xi_i \tau^{(n_i)}(u)} \right) \, d\omega \, d\nu^u
		\label{intintegrand1}
	\end{equation}
	after interchanging the order of integration and summation. 

	Since $\omega$ is bi-invariant, we can replace $h$ with $\left( \Hol^{(\max n_j)}(u) \right) \circ h$. Of course, we have the identity $\left( \Hol^{(n_i)}(u) \right)^{-1} \circ \Hol^{(\max n_j)}(u) = \Hol^{(n_i - (\max n_j))}(\sigma^{n_i}(u))$, and \eqref{intintegrand1} becomes
	\[
		\sum_{n_1, \ldots, n_k=1}^\infty \int_U \int_G \hat\varphi_{0,-\xi}\left(u,\Hol^{(\max n_j)}(u) \circ h\right) \left( \prod_{i=1}^k \hat\varphi_{i,\xi_i}\left( \sigma^{n_i}(u), \Hol^{(n_i - (\max n_j))}(u) \circ h \right) e^{-\xi_i \tau^{(n_i)}(u)} \right) \, d\omega \, d\nu^u
	\]
	with this change of variables. Once again, this becomes
	\[
		\sum_{n_1, \ldots, n_k = 1}^\infty \int_G \int_U \hat\varphi_{0,-\xi}\left(u,\Hol^{(\max n_j)}(u) \circ h\right) \left( \prod_{i=1}^k \hat\varphi_{i,\xi_i}\left( \sigma^{n_i}(u), \Hol^{(n_i - (\max n_j))}(u) \circ h \right) e^{-\xi_i \tau^{(n_i)}(u)} \right) \, d\nu^u \, d\omega
	\]
	by simply reversing the order of integration. Let us focus on the innermost integral 
	\begin{equation}
		\int_U \hat\varphi_{0,-\xi}\left(u,\Hol^{(\max n_j)}(u) \circ h\right) \left( \prod_{i=1}^k \hat\varphi_{i,\xi_i}\left( \sigma^{n_i}(u), \Hol^{(n_i - (\max n_j))}(u) \circ h \right) e^{-\xi_i \tau^{(n_i)}(u)} \right)  \, d\nu^u
		\label{integrand2}
	\end{equation}
	for the time being. Applying $\mathcal L_{P(\varsigma),0}$ to the integrand in \eqref{integrand2} a total of $\max n_j$ times yields 
	\[
		\sum_{{\sigma^{\max n_j}(u')=u}} \hspace{-17pt} \hat\varphi_{0,-\xi}\left(u',\Hol^{(\max n_j)}(u') \circ h\right) \left( \prod_{i=1}^k \hat\varphi_{i,\xi_i}\left( \sigma^{n_i}(u'), \Hol^{(n_i - (\max n_j))}(u') \circ h \right) e^{-\xi_i \tau^{(n_i)}(u')} \right) e^{\alpha_{P(\varsigma)}^{(\max n_j)}(u')}
	\]
	for any given values of $n_1, \ldots, n_k \in \bb N$. Now, observe that $e^{-\Re(\xi_i) \tau^{(n_i)}(u)}$ is at most $e^{- \Re(\xi_i) \tau^{(\max n_j)}(u)}$ so long as each $\Re(\xi_i)$ is negative. Hence, we can bound the magnitude of the integrand in \eqref{integrand2} above by
	\[
		\sum_{\sigma^{\max n_j}(u')=u} \hspace{-10pt} \left|\hat\varphi_{0,-\xi}\left(u',\Hol^{(\max n_j)}(u') \circ h\right)\right|  \left( \prod_{i=1}^k \left\|\hat\varphi_{i,\xi_i}\right\|_{C^0}   \right) e^{-\Re(\xi) \tau^{(\max n_j)}(u') +\alpha_{P(\varsigma)}^{(\max n_j)}(u')}
	\]
	using the triangle inequality. Rearranging this expression, we see that the magnitude of the integrand in \eqref{integrand2} is bounded above by
	\begin{equation}
		\left( \prod_{i=1}^k \left\|\hat\varphi_{i,\xi_i}\right\|_{C^0}   \right) \left(\sum_{\sigma^{\max n_j}(u')=u} \left|\hat\varphi_{0,-\xi}\left(u',\Hol^{(\max n_j)}(u') \circ h\right)\right|e^{-\Re(\xi) \tau^{(\max n_j)}(u') +\alpha_{P(\varsigma)}^{(\max n_j)}(u')} \right)
	\label{integrand3}
\end{equation}
which should be reminiscent of the expression defining the transfer operator. To make this concrete, recall that we have an $L^2(G)$-invariant decomposition of the function $\hat \varphi_{0,-\xi} \in C^1(U,L^2(G))$ in terms of its isotypic components
	\[
		\hat \varphi_{0,-\xi}\left( u', \Hol^{(\max n_j)}(u') \circ h \right) = \sum_{\rho} \hat \varphi_{0,-\xi}^\rho\left( u', \Hol^{(\max n_j)}(u') \circ h \right)
	\]
	where the sum is taken over irreducible representations $\rho$ of $G$ -- including the trivial representation. Once again, by the triangle inequality, we can bound \eqref{integrand3} above by
	\[
	\left( \prod_{i=1}^k \left\|\hat\varphi_{i,\xi_i}\right\|_{C^0}   \right) \left(\sum_\rho \sum_{\sigma^{\max n_j}(u')=u} \left|\hat\varphi^\rho_{0,-\xi}\left(u',\Hol^{(\max n_j)}(u') \circ h\right)\right|e^{-\xi \tau^{(\max n_j)}(u') +\alpha_{P(\varsigma)}^{(\max n_j)}(u')} \right)
	\]
	where we quickly recognize the transfer operator $\mathcal L_{\rho,P(\varsigma)+\Re(\xi)}^{\max n_j}$ applied to $\left|\hat \varphi^\rho_{0,-\xi} \right|$, noting of course that
	\[
		\left| \hat \varphi_{0,-\xi}^\rho \left( u', \Hol^{(\max n_j)}(u') \circ h \right) \right| = \left| \rho\left( \Hol^{(\max n_j)}(u') \right) \cdot \hat \varphi_{0,-\xi}^\rho \left( u', h \right)\right|
	\]
	for each $\rho$. Putting this back together, we see that \eqref{laplacetransformeq} is bounded above by
	\[
		\sum_\rho \sum_{n_1, \ldots, n_k=1}^\infty \int_U \int_G \left(\mathcal L^{\max n_j}_{\rho, P(\varsigma) + \Re(\xi)}\left|\hat\varphi^\rho_{0,-\xi}\right|\right)(u, h) \left( \prod_{i=1}^k \left\|\hat\varphi_{i,\xi_i}\right\|_{C^0} \right) \, d\omega \, d\nu^u
	\]
	in magnitude, as desired.
\end{proof}

We now simply use the bounds in Theorem \ref{spectralgap} to conclude that the Laplace transform $\hat \beta$ in Lemma \ref{laplacetransform} converges. 

\begin{theorem}
	With conditions as above, there are uniform constants $C > 0$ and $r < 1$ so that
	\[
		\left| \int_{U_{\tau, \Hol}} \varphi_0(u,h,r) \left( \prod_{i=1}^k \varphi_i(u,h,r+t_i) \right) \, d\nu^u \, d\omega \, dr \right| \leq C r^{\max t_j} \left(\|\varphi_0\|_{C^1} \cdot \ldots \cdot \|\varphi_k\|_{C^1}\right)
	\]
	for all $\varphi_0, \ldots, \varphi_k \in C^1(U_{\tau, \Hol}, \bb C)$ with $\int_{U_{\tau,\Hol}} \varphi_i \, d\nu^u \, d\omega \, dr = 0$.
	\label{spectralgapmixing}
\end{theorem}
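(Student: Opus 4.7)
The plan is to substitute the spectral bound from Theorem \ref{spectralgap} into the Laplace-transform estimate of Lemma \ref{laplacetransform}, conclude that $\hat\beta_k(\xi_1,\ldots,\xi_k)$ is holomorphic and uniformly bounded on a strip $|\Re(\xi_i)| < \epsilon$ for some small $\epsilon > 0$, and then invert the Laplace transform to recover exponential decay of $\beta_k$. The first step is to bound each inner integral in Lemma \ref{laplacetransform}: since $\nu^u$ is a probability measure, Cauchy--Schwarz gives
\[
\int_U \left(\mathcal L^{\max n_j}_{\rho, P(\varsigma)+\xi}|\hat\varphi^\rho_{0,-\xi}|\right)(u) \, d\nu^u \leq \left\|\mathcal L^{\max n_j}_{\rho, P(\varsigma)+\xi}|\hat\varphi^\rho_{0,-\xi}|\right\|_{L^2(\nu^u)},
\]
and Theorem \ref{spectralgap} bounds the right-hand side by $C r^{\max n_j} \|\hat\varphi^\rho_{0,-\xi}\|_{C^1}$ for nontrivial irreducible $\rho$ and complex $\xi = \sum_i \xi_i$ with $|\Re(\xi)| < 1$. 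The trivial-representation contribution encodes the correlation decay of the base flow, and is handled by the classical Dolgopyat estimate for $\mathcal L_{z,0}$ in the same range of $z$.

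Next I would sum these estimates. For $\varphi_0 \in C^{N+1}$ with $N$ as in Proposition \ref{seriesconverges}, Theorem \ref{reptheorylemma} yields $\|\hat\varphi^\rho_{0,-\xi}\|_{C^1} \leq C \|\varphi_0\|_{C^{N+1}} \|\rho\|^{-N}$ (one derivative being spent on the $u$-direction, the remaining $N$ yielding decay in $\|\rho\|$), so Proposition \ref{seriesconverges} makes $\sum_\rho \|\rho\|^{-N}$ converge. The sum over $(n_1,\ldots,n_k)$ is handled by grouping tuples with $\max n_j = m$ (there are at most $k m^{k-1}$ of them) and using $\sum_m m^{k-1} r^m < \infty$ for $r < 1$. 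Combining these ingredients yields a uniform bound
\[
|\hat\beta_k(\xi_1,\ldots,\xi_k)| \leq C \|\varphi_0\|_{C^{N+1}} \prod_{i=1}^k \|\varphi_i\|_{C^0}
\]
on a strip $\max_i |\Re(\xi_i)| < \epsilon$ for some $\epsilon > 0$.

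Shifting the Laplace-transform contour from $\Re(\xi_i) = 0$ to $\Re(\xi_i) = -\epsilon/2$ in each variable then produces $|\beta_k(t_1,\ldots,t_k)| \leq C e^{-(\epsilon/2) \max t_j} \|\varphi_0\|_{C^{N+1}} \prod_i \|\varphi_i\|_{C^0}$. To replace $\|\varphi_0\|_{C^{N+1}}$ by $\|\varphi_0\|_{C^1}$ as required, I would run a smoothing-interpolation argument in the spirit of Lemma \ref{expmixingregularity}: approximate $\varphi_0 \in C^1$ by $C^{N+1}$ mollifications $\varphi_{0,\delta}$ with $\|\varphi_{0,\delta}\|_{C^{N+1}} \lesssim \delta^{-N} \|\varphi_0\|_{C^1}$ and $\|\varphi_{0,\delta} - \varphi_0\|_{C^0} \lesssim \delta \|\varphi_0\|_{C^1}$, then optimize $\delta$ as a small fractional power of $e^{-(\epsilon/2)\max t_j}$ to obtain the statement, at the cost of a slightly smaller rate $r < 1$.

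The main obstacle is reconciling the uniformity of the spectral gap in Theorem \ref{spectralgap} (contraction rate $r$ uniform in $\rho$) with the infinite sum over $\rho$: each application of Theorem \ref{spectralgap} costs the full $C^1(U, V^\rho)$ norm of $\hat\varphi^\rho_{0,-\xi}$, which does not by itself decay in $\|\rho\|$ unless we invest additional derivatives of $\varphi_0$ in the $G$-direction. The smoothing step at the end is precisely what pays back those derivatives in terms of the $C^1$ norm that appears in the statement.
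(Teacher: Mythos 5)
Your approach is correct through the step of substituting the spectral bound into Lemma~\ref{laplacetransform}, handling the trivial representation with Dolgopyat's estimate, summing over $(n_1,\ldots,n_k)$, and using Theorem~\ref{reptheorylemma} together with Proposition~\ref{seriesconverges} to make the sum over $\rho$ converge. The concluding smoothing step to pass from higher regularity to $C^1$ is also the right idea and matches the paper.

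However, there is a genuine gap where you pass from the uniform bound
\[
|\hat\beta_k(\xi_1,\ldots,\xi_k)| \leq C \|\varphi_0\|_{C^{N+1}} \prod_{i=1}^k \|\varphi_i\|_{C^0}
\]
on a strip to exponential decay of $\beta_k$ via ``shifting the contour.'' That uniform bound is constant in the imaginary parts $\Im(\xi_i)$, so the inverse Laplace integral $\int \cdots \int \hat\beta_k \, ds_1 \cdots ds_k$ (with $s_i = \Im(\xi_i)$) does not converge; a bounded holomorphic function on a strip is not in $L^1$ of the imaginary axis, and contour shifting gives nothing without some quantitative decay at vertical infinity. You need a polynomial rate of decay in each $|\Im(\xi_i)|$, and the paper extracts it by integrating by parts in the defining integrals $\hat\varphi_{i,\xi_i}(u,h) = \int_0^{\tau(u)} \varphi_i(u,h,t_i) e^{-\xi_i t_i}\,dt_i$, yielding
\[
\|\hat\varphi_{i,\xi_i}\|_{C^1} \leq \frac{D'' \|\varphi_i\|_{C^2}}{1 + |\Im(\xi_i)|}, \qquad \|\hat\varphi_{0,-\xi}\|_{C^{N+1}} \leq \frac{D'' \|\varphi_0\|_{C^{N+2}}}{1 + |\Im(\xi)|},
\]
at the cost of one more derivative on each $\varphi_i$ (which the final smoothing argument also has to pay back). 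Even with this decay, the resulting integral over $(s_1,\ldots,s_k)$ converges only marginally — the paper needs a separate lemma with a hypergeometric-function computation to verify $\int \prod_i (1+|s_i|)^{-1} (1+|\sum s_i|)^{-1} \, ds < \infty$. Your proposal as written skips both the integration by parts and the convergence verification of the resulting inverse-transform integral, and without those the argument does not close.
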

\begin{proof}
	We assume that Theorem \ref{spectralgap} holds, and so we have
	\begin{equation}
		\|\mathcal L^n_{z,\rho} \varphi \|_{L^2(\nu^u)} \leq C \|\varphi\|_{C^1} r^n
		\label{spectralinequality}
	\end{equation}
	for all non-trivial irreducible $\rho$, all $\varphi \in C^1(U, V^\rho)$ and for each $z \in \bb C$ with $|\Re(z) - P(\varsigma)| < 1$. Up to choosing larger values of $C$ and $r$, we can also assume that the same inequality holds when $\rho$ is trivial -- this is precisely the main result in \cite{dolgopyatmixing}.
	We will show that the expression bounding $\hat\beta(\xi_1, \ldots, \xi_k)$ in Lemma \ref{laplacetransform} converges whenever the real parts of $\xi_i$ simultaneously lie in the interval $-\frac 1 k < \Re(\xi_i) < 0$. The decay desired will follow immediately from applying the inverse Laplace transform to the specific bounds we obtain.

	Fix $\xi_1, \ldots, \xi_k$ with $-\frac 1 k < \Re(\xi_i) < 0$. As before, we consider the function
	\[
		\hat \varphi_{i,\xi_i}(u,h) = \int_0^{\tau(u)} \varphi_i(u,h,t) e^{-\xi_it_i} \, dt_i
	\]
	and decompose $\hat \varphi_{0,-\xi}$ into its isotypic components
	\[
		\hat \varphi_{0,-\xi} = \sum_\rho \hat \varphi_{0,-\xi}^\rho
	\]
	where $\xi = \xi_1 + \ldots + \xi_k$ as before, noting that the decomposition of $L^2(G)$ into irreducible subspaces commutes with the Laplace transform. By \eqref{spectralinequality}, whenever $- \frac 1 k < \Re(\xi_i) < 0$, we have
	\begin{align*}
		\int_U \int_G \left( \mathcal L^{\max n_j}_{\rho,P(\varsigma) + \Re(\xi)} \left| \hat \varphi^\rho_{0,-\xi} \right| \right)(u,h) \, d\omega \, d\nu^u &\leq \left\| \left(\mathcal L^{\max n_j}_{\rho, P(\varsigma) + \Re(\xi)} \left|\hat \varphi^\rho_{0,-\xi} \right|\right)\right\|_{L^2(\nu^u)}\\
		&\leq C \|\hat \varphi_{0,-\xi}^\rho\|_{C^1} r^{\max n_j}
	\end{align*}
	for each $\rho$. Combining this with Lemma \ref{laplacetransform}, we are reduced to ensuring that
	\begin{equation}
		\sum_\rho \sum_{n_1, \ldots, n_k = 1}^\infty C \|\hat \varphi^\rho_{0,-\xi} \|_{C^1} r^{\max n_j} \left( \prod_{i=1}^k \| \hat \varphi_{i,\xi_i} \|_{C^0} \right) < k! \sum_\rho \sum_{n=1}^\infty C \|\hat \varphi^\rho_{0,-\xi} \|_{C^1} r^n \left( \prod_{i=1}^k \|\hat \varphi_{i,\xi_i} \|_{C^0} \right)
		\label{upperbound}
	\end{equation}
	converges. While this seems promising, note that we can only bound
	\begin{equation}
		\| \hat \varphi_{0,-\xi}^\rho \|_{C^1} \leq D \|\rho\| \left( \sup_{u \in U} \|\hat \varphi_{0,-\xi}^\rho (u, \cdot)\|_{L^2(G)} \right)
	\end{equation}
	for some constant $D > 0$. To salvage this, we will assume for the moment that we have chosen $\varphi_0 \in C^3(U_{\tau,\Hol},\bb R)$ so that we can invoke Theorem \ref{reptheorylemma} to bound
	\begin{equation}
		\|\rho\|\|\hat \varphi^\rho_{0,-\xi}(u,\cdot) \|_{L^2(G)} \leq \frac {D' \| \hat \varphi_{0,-\xi} (u,\cdot) \|_{C^{N+1}}}{\|\rho\|^N}
		\label{fouriereq}
	\end{equation}
	pointwise with a fixed constant $D' > 0$, where $N > 0$ is the constant guaranteed by Proposition \ref{seriesconverges}. With this, it is clear that the expression on the right side of \eqref{upperbound} converges absolutely, which says immediately that $\beta_k$ must decay exponentially fast in $t_1, \ldots, t_k$; this is almost what we wanted to show, but we need an explicit bound on $\hat \beta_k$ in order to obtain uniform estimates with the desired constants.

	Integrating the defining expression for $\hat \varphi_{i,\xi_i} (u,h)$ by parts, we have
	\[
		\hat \varphi_{i, \xi_i}(u,h) = \left. \frac{\varphi_i(u,h,t_i) e^{-\xi_it_i}}{-\xi_i} \right|_0^{\tau(u)}  + \frac 1 {\xi_i} \int_0^{\tau(u)} \left(\frac{\partial}{\partial t_i} \varphi_i (u,h,t_i)\right) e^{-\xi_i t_i} \, dt_i
	\]
	for each $u$ and $h$. Assuming that we choose $\varphi_i \in C^2(U_{\tau,\Hol},\bb R)$, we can crudely bound
	\[
		\| \hat \varphi_{i, \xi_i} \|_{C^1} \leq \frac{D'' \|\varphi_i\|_{C^2}}{1 + |\Im(\xi_i)|}
	\]
	for some constant $D''$ that depends only on $\|\tau\|_{C^1}$ -- note that it is essential here that $\Re(\xi_i)$ is confined to a bounded interval. We can similarly bound
	\[
	\left\| \hat \varphi_{0, -\xi} \right\|_{C^{N+1}} \leq \frac{D'' \|\varphi_0\|_{C^{N+2}}}{1 + |\Im(\xi)|}
	\]
	by choosing a larger value for $D''$ if necessary. Putting this all together, we have
	\[
		|\hat \beta_k(\xi_1, \ldots, \xi_k)| \leq \frac {C' \|\varphi_0\|_{C^{{N+2}}}}{1 + |\Im(\xi)|} \left( \prod_{i=1}^k \frac{\|\varphi_i\|_{\mathrlap{C^2}}}{1 + |\Im(\xi_i)|} \right)
	\]
	for all $\xi_i \in \bb C$ with $-\frac 1 k < \Re(\xi_i) < 0$. Now, we simply take the inverse Laplace transform in the variables $\xi_1$ through $\xi_k$ in succession to get
	\[
		|\beta_k(t_1, \ldots, t_k)| \leq \frac {e^{\Re(\xi_1)t_1+ \ldots + \Re(\xi_k) t_k}}{(2\pi)^k} \int_{-\infty}^\infty \cdots \int_{-\infty}^\infty \frac {\|\varphi_0\|_{C^{N+2}}} {1 + |s_1 + \ldots + s_k|} \left( \prod_{i=1}^k \frac {\|\varphi_i\|_{C^2}} {1+|s_i|} \right)\, ds_1 \, \ldots \, ds_k
	\]
	where $s_i$ is to be interpreted as the imaginary part of $\xi_i$. To complete our decay estimate, we simply need to show that the integral
	\begin{equation}
		\int_{-\infty}^\infty \cdots \int_{-\infty}^\infty \frac 1 {1 + |s_1 + \ldots + s_k|} \left( \prod_{i=1}^k \frac 1 {1 + |s_i|} \right) \, ds_1 \, \ldots \, ds_k
		\label{elementaryintegral}
	\end{equation}
	converges. Unfortunately, this is somewhat involved, so we postpone our remarks on the proof for the moment. Once the convergence of this integral has been established, we will have shown that
	\[
		|\beta_k(t_1, \ldots, t_k)| < Cr^{\max t_j} \|\varphi_0\|_{C^{N+2}} \left( \prod_{i=1}^k \|\varphi_i\|_{C^2}\right)
	\]
	for all $\varphi_0 \in C^{N+2}(U_{\tau,\Hol}, \bb R)$ and $\varphi_i \in C^2(U_{\tau,\Hol},\bb R)$. An identical argument to the one given in Lemma \ref{expmixingregularity} extends this to $C^1$ functions, using \cite[Lemma 2.4]{gorodnikspatzier} once again.
\end{proof}

We now indicate how to establish the convergence of the integral encountered in the preceding proof; we have included this for the sake of completeness, but the details are not relevant to the rest of our argument and can be safely skipped.

\begin{lemma}
	The integral
	\[
		\int_{-\infty}^\infty \cdots \int_{-\infty}^\infty \frac 1 {1+|s_1 + \ldots + s_k|} \left( \prod_{i=1}^k \frac 1 {1+|s_i|} \right) \, ds_1 \, \ldots \, ds_k
	\]
	converges.
\end{lemma}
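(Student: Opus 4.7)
The plan is to establish the following strengthened statement by induction on $k$: there is a constant $C_k > 0$ such that, for every $a \in \bb R$,
\[
J_k(a) := \int_{\bb R^k} \frac{1}{1+|s_1+\ldots+s_k+a|} \prod_{i=1}^k \frac{1}{1+|s_i|} \, ds_1 \cdots ds_k \leq C_k \frac{(\log(2+|a|))^k}{1+|a|}.
\]
Specializing to $a = 0$ immediately yields $J_k(0) \leq C_k (\log 2)^k < \infty$, which is precisely the content of the lemma. For the base case $k = 1$, an explicit computation by partial fractions on each of the three regions $s < -a$, $-a < s < 0$, $s > 0$ (taking $a > 0$; the case $a < 0$ is symmetric) gives $J_1(a) \leq C \log(2+|a|)/(1+|a|)$.

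For the inductive step, Fubini reduces the problem to the one-variable estimate
\[
J_k(a) = \int_{\bb R} \frac{J_{k-1}(s+a)}{1+|s|} \, ds \leq C_{k-1} \int_{\bb R} \frac{(\log(2+|s+a|))^{k-1}}{(1+|s+a|)(1+|s|)} \, ds,
\]
so it suffices to bound the latter by $C'_k (\log(2+|a|))^k/(1+|a|)$. I would split the line of integration into four regions by comparing $|s|$ and $|s+a|$ to $|a|$: (i) $|s| \leq |a|/2$, where $1/(1+|s+a|)$ is bounded by $C/(1+|a|)$ and integrating $1/(1+|s|)$ over the bounded range contributes one factor of $\log(2+|a|)$; (ii) $|s+a| \leq |a|/2$, handled analogously after the substitution $u = s+a$; (iii) the intermediate region $|a|/2 < |s| \leq 2|a|$, where $1/(1+|s|)$ is comparable to $1/(1+|a|)$ and the change of variable $u = s+a$ produces an integral of the form $\int_0^{3|a|} (\log(2+u))^{k-1}/(1+u)\, du$, equal by a direct antiderivative to $k^{-1}(\log(2+3|a|))^k$; and (iv) the tail $|s| > 2|a|$, where $|s+a| \geq |s|/2$ makes the integrand bounded by a constant times $(\log(2+|s|))^{k-1}/(1+|s|)^2$, whose integral over this range is $O((\log(2+|a|))^{k-1}/(1+|a|))$.

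The main technical point is the intermediate region (iii): it is the only region in which a new logarithmic factor is genuinely produced, and the case analysis needs to be calibrated precisely so that exactly the $k$-th power of $\log$ appears, with no spurious extra logarithm from the tail or from the overlap of the regions. Once each of the four regions is shown to contribute at most $C (\log(2+|a|))^k/(1+|a|)$, the induction closes and the lemma follows.
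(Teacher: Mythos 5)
Your proof is correct, and it takes a genuinely different route from the paper's. The paper avoids the logarithmic bookkeeping by first weakening the factor $\tfrac{1}{1+|s_1+\ldots+s_k|}$ to the fractional power $\tfrac{1}{(1+|s_1+\ldots+s_k|)^{1/2-\epsilon}}$ and then explicitly computing the resulting one-variable convolution in terms of Gaussian hypergeometric functions ${}_2F_1$, showing that each convolution with $(1+|\cdot|)^{-1}$ degrades the exponent from $\tfrac12-\epsilon$ to $\tfrac12-2\epsilon$; iterating $k-1$ times with $\epsilon < 2^{-k}$ keeps the exponent positive, so the last one-variable integral converges. You instead retain the full weight, track the $\log$ losses explicitly, and prove the quantitative estimate $J_k(a)\le C_k(\log(2+|a|))^k/(1+|a|)$ by induction via a region decomposition in the Fubini step. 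Both strategies succeed; yours is the more elementary (only elementary antiderivatives, no special functions) and yields a cleaner asymptotic for the shifted integral, whereas the paper's fractional-exponent trick replaces logarithmic bookkeeping with an opaque hypergeometric identity and is itself a bit terse, since the claim that the lemma ``follows immediately by direct successive integration'' actually requires the intermediate estimate at a sequence of shrinking exponents $\tfrac12-2^j\epsilon$, not just at $\tfrac12-\epsilon$. Two small nits on your side: region~(ii) is contained in region~(iii), so your cover is redundant (harmlessly); and the primitive of $(\log(2+u))^{k-1}/(1+u)$ is not exactly $k^{-1}(\log(2+u))^k$ because of the $1+u$ versus $2+u$ mismatch, though the two are comparable up to a constant since $(2+u)/(1+u)\le 2$ on $[0,\infty)$.
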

\begin{proof}
	We will in fact show that the function defined by the integral
	\begin{equation}
		f(x) \coloneqq \int_{-\infty}^\infty \frac 1 {(1+|x+y|)^{0.5-\epsilon}} \left( \frac 1 {1 + |y|} \right) \, dy
		\label{inductionintegral}
	\end{equation}
	decays at a rate of
	\[
		|f(x)| \leq \frac C {(1 + |x|)^{0.5-2\epsilon}}
	\]
	for all sufficiently small $\epsilon > 0$; the statement of the lemma follows immediately by direct successive integration. We will work in the case when $x > 0$, and split the domain of integration in \eqref{inductionintegral} into regions where $x+y, y < 0$, where $x + y > 0$ but $y < 0$ and where $x + y, y > 0$. In the first case, where $x + y$ and $y$ are both negative, we evaluate
	\[
		\int_{-\infty}^{-x} \frac 1 {(1 - (x+y))^{0.5-\epsilon}} \left( \frac 1 {1 - y} \right) \, dy 
	\]
	for a fixed $x > 0$. One can verify that the antiderivative of this expression is given by
	\[
		\frac {\left( \frac {1-(x+y)}{1-y} \right)^{0.5-\epsilon} {}_2F_1(0.5-\epsilon,0.5-\epsilon;1.5-\epsilon;\frac{x}{1-y})}{(0.5-\epsilon) (1-(x+y))^{0.5-\epsilon}}
	\]
	where ${}_2F_1(\cdot,\cdot;\cdot;\cdot)$ is (the principal branch of the analytic continuation of) the Gaussian hypergeometric function. Hence, the integral evaluates to
	\[
		\int_{-\infty}^{-x} \frac 1 {(1 - (x+y))^{0.5-\epsilon}} \left( \frac 1 {1 - y} \right) \, dy = \frac {{}_2F_1(0.5-\epsilon,0.5-\epsilon;1.5-\epsilon;\frac{x}{1+x})}{(0.5-\epsilon)(1+x)^{0.5-\epsilon}}
	\]
	which can be bounded above by $\frac C {(1+x)^{0.5-\epsilon}}$ for an appropriate choice of constant $C > 0$, since by \cite[15.1.1]{abramowitzstegun} the defining series for ${}_2F_1(a,b;c;z)$ converges on the unit disk in the complex plane when $c - (a+b) > 0$.
	Similarly, on the region where $x+y$ is positive and $y$ is negative, we evaluate
	\begin{align*}
		\int_{-x}^0 \frac 1 {(1 + (x+y))^{0.5-\epsilon}} \left( \frac 1 {1 - y} \right) \, dy &= \left.\left( \frac{(1+(x+y))^{0.5+\epsilon} {}_2F_1(0.5+\epsilon,1;1.5+\epsilon;\frac{1+(x+y)}{2+x})}{(0.5+\epsilon)(2+x)} \right)\right|_{y=-x}^{y=0}\\
				&= \frac{(1+x)^{0.5+\epsilon} {}_2F_1(0.5+\epsilon,1;1.5+\epsilon;\frac{1+x}{2+x})}{(0.5+\epsilon)(2+x)} - \frac{{}_2F_1 (0.5+\epsilon,1;1.5+\epsilon;\frac 1 {2+x})}{(0.5+\epsilon)(2+x)}
	\end{align*}
	which can once again be bounded above by $\frac C {(1+x)^{0.5-\epsilon}}$ for the same reasons, though with a possibly larger choice of $C > 0$. Finally, when both $x+y$ and $y$ are positive, we evaluate
	\begin{align*}
		\int_0^\infty \frac 1 {(1+(x+y))^{0.5-\epsilon}} \left( \frac 1 {1+y} \right) \, dy &= \left. \left( \frac{\left( \frac {1+(x+y)}{1+y} \right)^{0.5-\epsilon} {}_2F_1(0.5-\epsilon,0.5-\epsilon;1.5-\epsilon;\frac{-x}{1+y})}{(0.5-\epsilon)(1+(x+y))^{0.5-\epsilon}} \right)\right|_{y=0}^{y=\infty}\\
			&= -\frac{{}_2F_1(0.5-\epsilon,0.5-\epsilon;1.5-\epsilon;-x)}{0.5-\epsilon}
	\end{align*}
	and we simply need to understand the asymptotics of ${}_2F_1(0.5-\epsilon,0.5-\epsilon;1.5-\epsilon;-x)$ as $x \to \infty$. By \cite[15.3.1]{abramowitzstegun}, we have an integral representation given by
\begin{equation}
	{}_2F_1 (0.5 - \epsilon, 0.5-\epsilon; 1.5-\epsilon;-x) = \frac{\Gamma(1.5-\epsilon)}{\Gamma(0.5-\epsilon)\Gamma(1)} \int_0^1 \frac 1 {t^{0.5+\epsilon}(1+tx)^{0.5-\epsilon}} \, dt
		\label{lasthypergeo}
	\end{equation}
	for $x > 1$. Setting $u = tx$, we can rewrite
	\begin{align*}
		\int_0^1 \frac 1 {t^{0.5+\epsilon}(1+tx)^{0.5-\epsilon}} \, dt &= \frac 1 {x^{0.5-\epsilon}} \int_0^x \frac 1 {u^{0.5+\epsilon}(1+u)^{0.5-\epsilon}} \, du\\
		&= \frac 1 {x^{0.5-\epsilon}} \left( \int_0^1 \frac 1 {u^{0.5+\epsilon}(1+u)^{0.5-\epsilon}} \, du + \int_1^x \frac 1 {u^{0.5+\epsilon}(1+u)^{0.5-\epsilon}} \, du \right)
	\end{align*}
	at which point we note that the expression in parentheses can be bounded above by $C (1 + \log x)$. Hence, \eqref{lasthypergeo} can be bounded above by $\frac C {(1+x)^{0.5 - 2\epsilon}}$ with a possibly larger choice of $C$, as desired. The proof in the case when $x < 0$ is identical.
\end{proof}

\section{Uniform local non-integrability from local \texorpdfstring{$G$}{G}-accessibility}

In this section, we will use the local accessibility of $f_t$ to establish the uniform local non-integrability estimates necessary to prove Theorem \ref{spectralgap}, drawing on techniques introduced by Dolgopyat in \cite{dolgopyatextension} for group extensions of expanding maps. These arguments require some additional care to adapt to our setting, with the principal difficulties stemming from the nontriviality of the fiber bundle $\pi \colon M \to N$ and the non-integrability of the strong stable and unstable foliations of $g_t$.

We want to translate the local accessibility of $f_t$ into an infinitesimal statement on the Markov model we constructed in \S 2; we will accomplish this in two main steps. The first step is to define a subalgebra of the Lie algebra $\mathfrak g$ of $G$ that measures the `non-integrability' of the fiber bundle over the weak stable and strong unstable foliations; this will be accomplished before making any reference to our symbolic model. The second step is to translate this into the symbolic model.

For most of what follows, we will need to be careful to specify which chart $V_*$ of the trivialization we are working with at any given point. This is a necessary complication to many of our arguments, since many of the objects we are working with are highly sensitive to the choice of trivialization. Fortunately, however, this will also afford us the flexibility later on to work with trivializations that are specially adapted to our needs.

To start, we want to measure and relate three different holonomies associated to $f_t$: namely, the holonomies induced by the leaves of the strong stable foliation, the leaves of the strong unstable foliation and the flow.

\begin{definition}
	Fix $x,y \in N$ with $y \in W^{su}_{g_t}(x)$, along with trivializations $\phi_{x}, \phi_{y}$ of $\pi \colon M \to N$ at $x$ and $y$ corresponding to subsets $V_x, V_y \subset N$ respectively. We define the \textit{unstable holonomy} \[\Theta_{V_x,V_y}^+(x,y)\colon F \to F\] between $x$ and $y$ to be the isometry induced by the map $\pi^{-1}(x) \to \pi^{-1}(y)$ that takes $a \in \pi^{-1}(x)$ to the (necessarily unique) point $b \in \pi^{-1}(y) \cap W^{su}_{f_t}(a)$. The identifications of $\pi^{-1}(x)$ and $\pi^{-1}(y)$ with $F$ are obtained via the trivializations $\phi_x, \phi_y$.
	The \textit{stable holonomy} $\Theta^-_{V_x,V_y}(x,y)$ is defined analogously for $y \in W^{ss}_{g_t}(x)$.
	\label{unstableholonomy2}
\end{definition}

\begin{definition}
	Fix $x,y \in N$ with $g_t(x) = y$, along with trivializations $\phi_x,\phi_y$ of $\pi \colon M \to N$ at $x$ and $y$ corresponding to subsets $V_x,V_y \subset N$ respectively. We define the \textit{temporal holonomy} \[\Hol_{\phi_x}^{\phi_y}(x,y) \colon F \to F\] between $x$ and $y$ to be the isometry induced by the map $\pi^{-1}(x) \to \pi^{-1}(y)$ that takes $a \in \pi^{-1}(x)$ to $f_t(a) \in \pi^{-1}(y)$. The identifications of $\pi^{-1}(x)$ and $\pi^{-1}(y)$ with $F$ are obtained via the trivializations $\phi_x, \phi_y$.
	\label{holonomy}
\end{definition}

By the end of this section, we will only need to work with a fixed, finite collection of trivializations that cover $N$. At this stage, however, the flexibility in these definitions will be crucial. Our first observation is that the unstable holonomy can be expressed in terms of the temporal holonomies induced by the flow; this is made precise in the following proposition, whose proof is largely summarized in Figure \ref{fig:unstableholonomy}.

 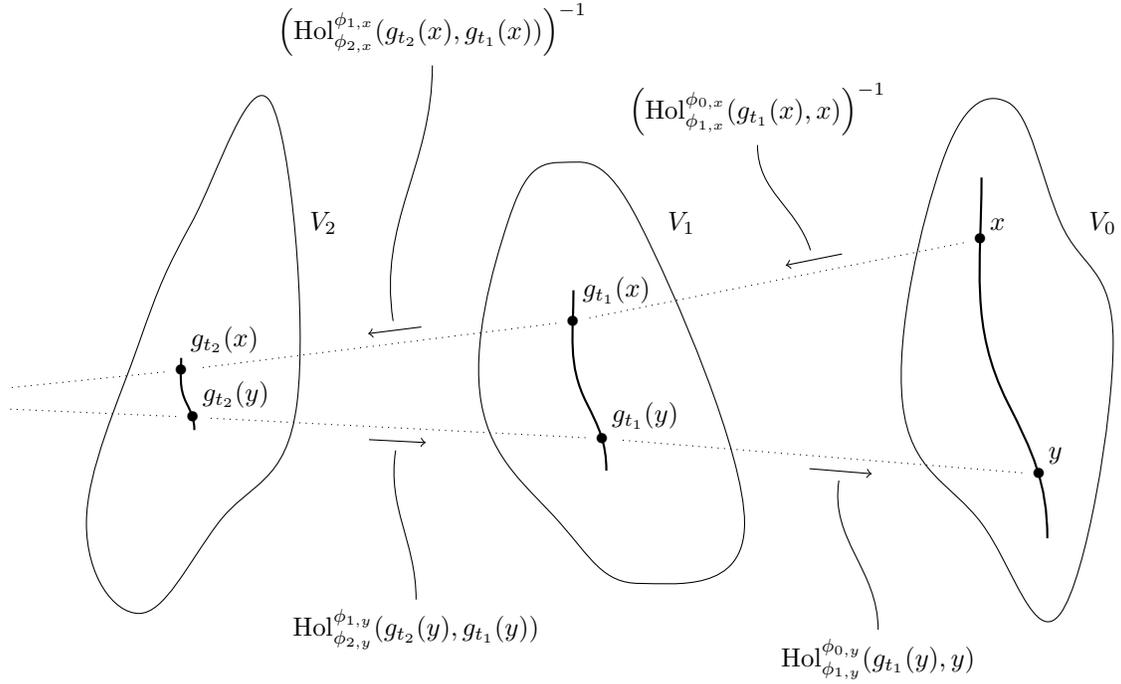
\begin{figure}[tb]
	 \centering
	 \begin{tikzpicture}[xscale=0.7,yscale=0.8]
		 \draw plot[smooth cycle, tension=0.7] coordinates {(0,0) (0.75,4) (2,5) (3,3) (4,1) (3,-3.5) (1.5,-2)};
		 \draw[thick] (1.5,3.75) to[out=-90,in=120] node[pos=0.25] (A) {$\bullet$} node[pos=0.25,above right] {$x$} (2,0)  to[out=-60,in=90] node[pos=0.55] (D) {$\bullet$} node[pos=0.55,above right] {$y$} (2.75,-2.25);
		 \draw[thick, xshift=-7cm,scale=0.5] (1.5,3.75) to[out=-90,in=120] node[pos=0.25] (B) {$\bullet$} node[pos=0.25,above right,yshift=3pt] {$g_{t_1}(x)$} (2,0)  to[out=-60,in=90] node[pos=0.55] (E) {$\bullet$} node[pos=0.55,above right] {$g_{t_1}(y)$} (2.75,-2.25);
		 \draw[thick, xshift=-14cm,scale=0.2] (1.5,3.75) to[out=-90,in=120] node[pos=0.25] (C) {$\bullet$} node[pos=0.25,above right,yshift=3pt] {$g_{t_2}(x)$} (2,0)  to[out=-60,in=90] node[pos=0.55] (F) {$\bullet$} node[pos=0.55,above right] {$g_{t_2}(y)$} (2.75,-2.25);
		 \draw[dotted] (A) -- node[yshift=5pt, pos=0.3] (H1) {} node[yshift=5pt, pos=0.5] (H2){} (B) -- node[yshift=5pt, pos=0.35] (I1) {} node[yshift=5pt, pos=0.55] (I2) {} (C);
		 \draw[->] (H1) -- node[pos=0.5] (H12) {} (H2);
		 \draw[->] (I1) -- node[pos=0.5] (I12) {} (I2);
		 \draw[dotted] (D) -- node[yshift=-5pt, pos=0.35] (J2) {} node[yshift=-5pt, pos=0.55] (J1) {} (E) -- node[yshift=-5pt, pos=0.4] (K2) {} node[yshift=-5pt, pos=0.6] (K1) {} (F);
		 \draw[->] (J1) -- node[pos=0.5] (J12) {} (J2);
		 \draw[->] (K1) -- node[pos=0.5] (K12) {} (K2);
	 \draw[dotted] (C) -- (-17,0.25);
	 \draw[dotted] (F) -- (-17,-0.1);
	 \node[xshift=-0.75cm,yshift=2cm] (H11) at (H12) {$\left(\Hol_{\phi_{1,x}}^{\phi_{0,x}}(g_{t_1}(x),x) \right)^{-1}$};
	 \draw[-] (H11) to[out=-90,in=110] (H12);
	 \node[overlay,xshift=0.5cm,yshift=4cm] (I11) at (I12) {$\left(\Hol_{\phi_{2,x}}^{\phi_{1,x}}(g_{t_2}(x),g_{t_1}(x)) \right)^{-1}$};
	 \draw[-] (I11) to[out=-90,in=100] (I12);
	 \node[overlay,xshift=0.25cm,yshift=-2.5cm] (K11) at (K12) {$\Hol_{\phi_{2,y}}^{\phi_{1,y}}(g_{t_2}(y), g_{t_1}(y))$};
	 \draw[-] (K11) to[out=90,in=-100] (K12);
 \node[xshift=0.5cm,yshift=-2.5cm] (J11) at (J12) {$\Hol_{\phi_{1,y}}^{\phi_{0,y}}(g_{t_1}(y), y)$};
	 \draw[-] (J11) to[out=90,in=-100] (J12);
	 \draw[xshift=-8cm] plot[smooth cycle, tension=0.7] coordinates {(0,0) (0.5,3) (1.5,4) (3,3) (5,-2) (3,-3) (1.5,-2)};
	 \draw[xshift=-11.5cm,xscale=-1] plot[smooth cycle, tension=0.7] coordinates {(0,0) (0.5,5) (2,3) (3,1) (4,-2) (3,-3.5) (1.5,-2)};
	 \node at (3.8,3) {$V_{0}$};
	 \node at (-4.2,3) {$V_{1}$};
	 \node at (-11,3) {$V_{2}$};
	 \end{tikzpicture}
	 \caption{Measuring the unstable holonomy between $x$ and $y$ along a sequence of times $0 > t_1 > t_2 > \ldots$ with respect to trivializations $(\phi_{n,x})$ and $(\phi_{n,y})$ defined over charts $V_{n}$, illustrated in the case when the trivializations for $x$ and $y$ coincide. As the unstable leaf through $x$ and $y$ contracts under $g_{t_n}$, the remaining contribution to the unstable holonomy decreases.}
	 \label{fig:unstableholonomy}
 \end{figure}

\begin{proposition}
	Fix $x, y \in N$ with $y \in W^{su}_{g_t}(x)$, along with trivializations $\phi_{0,x}$ and $\phi_{0,y}$ defined at $x$ and $y$ respectively. Let $T = (t_n)$ be a monotonic sequence of times with $t_n = 0$ and $t_n \to -\infty$, and let $I_x = \left(\phi_{n,x}\right)$ and $I_y = \left(\phi_{n,y}\right)$ be sequences of trivializations for which $\phi_{k,x} = \phi_{k,y}$ for all $k \geq N$. Then we can write
	\begin{equation}
		\Theta^+_{\phi_{0,x},\phi_{0,y}}(x,y) = \lim_{n \to \infty} \Hol^{(n)}_{I_y,T}(y) \left( \Hol^{(n)}_{I_x,T}(x) \right)^{-1}
		\label{holonomyeqinf}
	\end{equation}
	where
	\[
		\Hol^{(n)}_{I_*,T}(*) \coloneqq \Hol_{\phi_{n,*}}^{\phi_{n-1,*}}(g_{t_n}(*),g_{t_{n-1}}(*)) \circ \ldots \circ \Hol_{\phi_{1,*}}^{\phi_{0,*}}( g_{t_1}(*),*)
	\]
	is the $n$-step temporal holonomy measured with respect to the trivializations $I_\cdot$ at times given by $T$.
	\label{holonomyprop}
\end{proposition}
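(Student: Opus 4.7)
The plan is to observe that, for every $n$, the composition on the right-hand side of \eqref{holonomyeqinf} differs from $\Theta^+_{\phi_{0,x},\phi_{0,y}}(x,y)$ only by a ``residual'' unstable holonomy measured at the pair $(g_{t_n}(x), g_{t_n}(y))$ in the common chart, and then to show that this residual converges to the identity as $n \to \infty$. The key structural input is the $f_t$-equivariance of the unstable holonomy: writing $\widehat \Theta^+(x,y) \colon \pi^{-1}(x) \to \pi^{-1}(y)$ for the chart-free isometry underlying Definition \ref{unstableholonomy2}, the $f_t$-invariance of $W^{su}_{f_t}$ yields
\[
    f_t \circ \widehat \Theta^+(x,y) = \widehat \Theta^+(g_t(x), g_t(y)) \circ f_t
\]
on $\pi^{-1}(x)$ for every $t \in \bb R$.

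I would then apply this intertwining with $t = t_n$, pre- and post-compose with the trivializations $\phi_{0,x}$, $\phi_{0,y}$, and $\phi_{n,x} = \phi_{n,y}$, and recognize by Definition \ref{holonomy} that the resulting composition of one-step flows telescopes into $\Hol^{(n)}_{I_x, T}(x)$ and $\Hol^{(n)}_{I_y, T}(y)$ respectively. This produces the exact factorization
\[
    \Theta^+_{\phi_{0,x},\phi_{0,y}}(x,y) = \Hol^{(n)}_{I_y, T}(y) \circ \Psi_n \circ \left(\Hol^{(n)}_{I_x, T}(x)\right)^{-1}
\]
valid for every $n \geq N$, where $\Psi_n \coloneqq \Theta^+_{\phi_{n,x}, \phi_{n,y}}(g_{t_n}(x), g_{t_n}(y))$ is the unstable holonomy between $g_{t_n}(x)$ and $g_{t_n}(y)$ expressed in the common trivialization.

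Finally, I would verify that $\Psi_n \to \id_F$. Since $y \in W^{su}_{g_t}(x)$ and $t_n \to -\infty$, the base distance $d(g_{t_n}(x), g_{t_n}(y))$ contracts exponentially, and for large $n$ both base points lie in the single chart on which $\phi_{n,x} = \phi_{n,y}$. The unstable holonomy depends continuously on its base points (indeed $C^1$, by the standing hypothesis that $W^{su}_{f_t}$ is $C^1$) and reduces to the trivial identification when the base points coincide, so $\Psi_n$ converges uniformly to $\id_F$. Since the outer factors $\Hol^{(n)}_{I_*, T}(*)$ are isometries of $F$ and therefore preserve the supremum metric on $\Isom(F)$, this convergence is transported through the exact factorization above to give \eqref{holonomyeqinf}. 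The main point of care is quantifying the convergence $\Psi_n \to \id_F$, but this is routine given the assumed regularity of the strong unstable foliation.
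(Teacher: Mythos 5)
Your proposal is correct and follows essentially the same strategy as the paper's proof: you derive the exact factorization $\Theta^+_{\phi_{0,x},\phi_{0,y}}(x,y) = \Hol^{(n)}_{I_y,T}(y) \circ \Psi_n \circ \bigl(\Hol^{(n)}_{I_x,T}(x)\bigr)^{-1}$ from the $f_t$-invariance of $W^{su}_{f_t}$, and use the exponential contraction of $d(g_{t_n}(x),g_{t_n}(y))$ together with the regularity of the unstable holonomy to conclude $\Psi_n \to \id_F$. The only cosmetic difference is that the paper first runs a separate Cauchy estimate to establish existence of the limit and then invokes the factorization, whereas you extract the convergence of $\Hol^{(n)}_{I_y,T}(y)\bigl(\Hol^{(n)}_{I_x,T}(x)\bigr)^{-1}$ directly from the factorization and the bi-invariance of the metric on the compact group $G$, which is slightly more streamlined but amounts to the same ingredients.
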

\begin{proof}
	The convergence of the limit is simply a consequence of the fact that $d(g_{t_n}(x),g_{t_n}(y)) \to 0$ as $n \to \infty$. More precisely, we can rewrite
	\[
		\Hol^{(n+1)}_{I_y,T}(y) \left( \Hol_{I_x,T}^{(n+1)}(x) \right)^{-1}
	\]
	as
	\[
		\Hol_{\phi_{n+1,y}}^{\phi_{n,y}}(g_{t_{n+1}}(y),g_{t_n}(y)) \circ \left(\Hol^{(n)}_{I_y,T}(y) \left( \Hol_{I_x,T}^{(n)}(x) \right)^{-1}\right) \circ \left(\Hol_{\phi_{n+1,x}}^{\phi_{n,x}}(g_{t_{n+1}}(x),g_{t_n}(x)) \right)^{-1}
	\]
	and since $f_t$ is $C^1$, we see that $\Hol_{\phi_{n+1,x}}^{\phi_{n,x}}(g_{t_{n+1}}(x),g_{t_n}(x))$ must also be locally $C^1$ in $g_{t_{n+1}}(x)$. Since $d_N(g_{t_{n+1}}(y),g_{t_{n+1}}(x))$ decay exponentially fast as $n \to \infty$ and the trivializations $I_x$ and $I_y$ eventually agree, we see that  
	\[
		d_G \left(  \Hol_{\phi_{n+1,y}}^{\phi_{n,y}}(g_{t_{n+1}}(y),g_{t_n}(y)) , \Hol_{\phi_{n+1,x}}^{\phi_{n,x}}(g_{t_{n+1}}(x),g_{t_n}(x)) \right)
	\]
	must also decay exponentially fast. In particular, for any $h \in G$, this means that
	\[
		d_G \left( \Hol_{\phi_{n+1,y}}^{\phi_{n,y}}(g_{t_{n+1}}(y),g_{t_n}(y)) \circ h \circ \left(\Hol_{\phi_{n+1,x}}^{\phi_{n,x}}(g_{t_{n+1}}(x),g_{t_n}(x))\right)^{-1},h  \right)
	\]
	decays exponentially fast and hence
	\[
		d_G\left( \Hol^{(n+1)}_{I_y, T}(y) \left( \Hol^{(n+1)}_{I_x, T}(x) \right)^{-1}, \Hol^{(n)}_{I_y, T}(y) \left( \Hol^{(n)}_{I_x, T}(x) \right)^{-1}  \right)
	\]
	decays exponentially fast as $n \to \infty$. Since $G$ is complete, the limit must exist.
	A similar argument shows that this limit is in fact equal to $\Theta^+_{\phi_{0,x},\phi_{0,y}}(x, y)$: since the unstable foliation of $f_t$ is invariant under the flow, we can rewrite
	\[
		\Theta^+_{\phi_{0,x},\phi_{0,y}}(x, y)
	\]
	as
	\begin{equation}
		\Hol^{(n)}_{I_y,T}(y) \circ \Theta^+_{\phi_{n,x},\phi_{n,y}}(g_{t_n}(x), g_{t_n}(y)) \circ \left( \Hol^{(n)}_{I_x, T}(x) \right)^{-1}
		\label{holonomyeq}
	\end{equation}
	for any $n > 0$ and any sequences $I_x, I_y$ and $T$ as above. As $t_n \to -\infty$, $d_N(g_{t_n}(x), g_{t_n}(y))$ decreases exponentially fast, and so $\Theta^+_{\phi_{n,x},\phi_{n,y}}(g_{t_n}(x), g_{t_n}(y))$ converges to the identity in $G$. Of course, this means that, as $n \to \infty$, \eqref{holonomyeq} converges to the limit in \eqref{holonomyeqinf}. Since the expression in \eqref{holonomyeq} is constant at $\Theta^+_{\phi_{0,x},\phi_{0,y}}(x,y)$, this proves the proposition.
\end{proof}

We need to understand the infinitesimal behaviour of the stable and unstable foliations - rather than working with the unstable holonomy as defined, we will instead consider its derivative along a leaf of the unstable foliation.

\begin{proposition}
	The unstable holonomy $\Theta^+_{\phi_1, \phi_2}(x,y)$ is simultaneously $C^1$ in $x$ and $y$, as $x$ and $y$ vary in a fixed leaf of the strong unstable foliation of $g_t$, and within charts associated to fixed $C^1$ trivializations $\phi_1$ and $\phi_2$.
	\label{unstholdif}
\end{proposition}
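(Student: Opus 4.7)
The plan is to exploit the limit formula from Proposition \ref{holonomyprop} to pass $C^1$ regularity from the finite-step approximants to the limit. Fix the trivializations $\phi_1, \phi_2$ and a compact piece $K$ of a single strong unstable leaf of $g_t$ on which both trivializations are defined, let $T = (t_n)$ with $t_n = -n$, and choose consistent sequences of trivializations $I_x = (\phi_{n,x})$ and $I_y = (\phi_{n,y})$ from a fixed finite atlas, arranged so that $\phi_{k,x} = \phi_{k,y}$ for all $k$ larger than some $N$ uniformly in $(x,y) \in K \times K$. Writing
\[
    h_n(x, y) \coloneqq \Hol^{(n)}_{I_y, T}(y) \cdot \left(\Hol^{(n)}_{I_x, T}(x)\right)^{-1},
\]
each $h_n$ is $C^1$ on $K \times K$ because $f_t$ and the trivializations are $C^1$, and Proposition \ref{holonomyprop} gives pointwise convergence $h_n \to \Theta^+_{\phi_1,\phi_2}$. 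It therefore suffices to show that $(h_n)$ is Cauchy in $C^1(K \times K, G)$.

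First I would estimate a single factor: for each $k$, the map $z \mapsto \Hol_{\phi_{k,z}}^{\phi_{k-1,z}}(g_{t_k}(z), g_{t_{k-1}}(z))$ is the composition of a smooth function (the temporal holonomy, with bounded $C^1$ norm on the trivializing charts) with the $C^1$ maps $z \mapsto g_{t_k}(z)$ and $z \mapsto g_{t_{k-1}}(z)$ restricted to the strong unstable leaf. Since $g_t$ contracts $E^{su}$ at an exponential rate under backward flow, the chain rule yields a uniform bound of order $e^{-\lambda |t_k|}$ on the derivative of this factor along the unstable direction, for some $\lambda > 0$ independent of $k$ and of $(x,y) \in K \times K$.

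Next, from the telescoping identity
\[
    h_{n+1}(x, y) = H^y_{n+1}(y) \cdot h_n(x, y) \cdot \left(H^x_{n+1}(x)\right)^{-1},
\]
where $H^z_{n+1}(z) \coloneqq \Hol_{\phi_{n+1,z}}^{\phi_{n,z}}(g_{t_{n+1}}(z), g_{t_n}(z))$, the product rule in $G$ together with the $C^0$ estimate $\|H^z_{n+1} - \id\|_{C^0(K)} \to 0$ from the proof of Proposition \ref{holonomyprop} and the single-factor derivative bound just established produces a geometric estimate $\|h_{n+1} - h_n\|_{C^1(K \times K)} \leq C e^{-\lambda n}$. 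Summing this geometric series yields uniform $C^1$ convergence of $h_n$ on $K \times K$, and the limit $\Theta^+_{\phi_1,\phi_2}$ inherits $C^1$ regularity there. Since $K$ was an arbitrary compact piece of a strong unstable leaf, this proves the proposition.

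The main technical subtlety is controlling changes of trivialization as the backward orbits $g_{t_k}(x)$ and $g_{t_k}(y)$ wander between charts of the fixed finite atlas; this is dealt with by shrinking $K$ and enlarging $N$ if necessary so that $I_x$ and $I_y$ agree (and remain within a single chart) from index $N$ onward on $K \times K$, ensuring that the change-of-trivialization factors contribute bounded $C^1$ terms which are absorbed into the constant $C$ without affecting the exponential decay in $n$.
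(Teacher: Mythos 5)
Your approach is genuinely different from the paper's, which simply observes that $\Theta^+_{\phi_1,\phi_2}(x,y)$ is obtained by following a leaf of $W^{su}_{f_t}$ from $\pi^{-1}(x)$ to $\pi^{-1}(y)$: since such leaves are $C^1$ submanifolds of $M$ and $\pi$ restricts to a $C^1$ diffeomorphism from the leaf to the base unstable leaf, the intersection map $y \mapsto W^{su}_{f_t}(a) \cap \pi^{-1}(y)$ is $C^1$ for each fixed $a$, and the cocycle relation $\Theta^+(x,y) = \Theta^+(x_0,y) \circ \Theta^+(x_0,x)^{-1}$ then yields joint $C^1$ dependence. Your plan, passing $C^1$ regularity through the limit formula of Proposition~\ref{holonomyprop}, is a priori reasonable, but the key estimate does not hold as stated.

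The problem is the claim that ``$\|H^z_{n+1} - \id\|_{C^0(K)} \to 0$ from the proof of Proposition~\ref{holonomyprop}.'' That proof establishes that $d_G\bigl(H^y_{n+1}(y),\, H^x_{n+1}(x)\bigr)$ decays exponentially, i.e.\ the two temporal holonomy factors approach \emph{each other}; it does not, and cannot, show that either approaches the identity. What does converge to $\id$ in that proof is the \emph{unstable} holonomy $\Theta^+_{\phi_{n,x},\phi_{n,y}}(g_{t_n}(x),g_{t_n}(y))$ between the shrinking pair of points, and you appear to have conflated the two. The temporal holonomy over a unit backward time step, $H^z_{n+1}(z) = \Hol_{\phi_{n+1,z}}^{\phi_{n,z}}(g_{t_{n+1}}(z),g_{t_n}(z))$, is the genuine fiber rotation produced by $f_1$ and is in general uniformly bounded away from $\id$. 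As a consequence, your $C^1$-Cauchy estimate fails: in a (say) left trivialization of $TG$, the derivative $v_n := h_n^{-1}\partial_x h_n$ satisfies the recursion $v_{n+1} = \Ad_{H^x_{n+1}(x)}(v_n) + O(e^{-\lambda n})$, and since $\Ad_{H^x_{n+1}(x)}$ is a unitary operator on $\mathfrak g$ that is not close to the identity, the sequence $(v_n)$ is bounded but need not converge. So the approximants $h_n$ converge in $C^0$ but not in $C^1$, and the $C^1$ regularity of the limit $\Theta^+$ cannot be extracted this way; you need the direct geometric argument via the $C^1$ leaves of $W^{su}_{f_t}$.
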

\begin{proof}
	This follows immediately from the fact that the leaves of the strong unstable foliation of $f_t$ are $C^1$.
\end{proof}

\begin{definition}
	Fix $x \in N$, a trivialization $\phi$ defined near $x$ and a vector $w \in T^1_xW^{su}_{g_t}(x)$. We define the \textit{infinitesimal holonomy} at $x$ in the direction of $w$ to be the element
	\[
		X_w^\phi(x) \coloneqq \left(\restr{\frac d {du}}{u=x}\left( \Theta^+_{V_x,V_x}(x, u) \right)\right)(w)
	\]
	of the Lie algebra $\mathfrak g$ of $G$.
	Let $\epsilon > 0$ be small enough that $\phi$ is defined over $B_\epsilon(x)$. The \textit{$\epsilon$-infinitesimal transitivity group} at $x$ is defined to be the linear span
	\[
		\mathfrak h_\epsilon^\phi(x) \coloneqq \Span_{y,w} \left(X_{w'}(y) - X_w(x)\right)
	\]
	taken over all $y \in W^{ss}_\epsilon(x)$ and $w \in T^1_xW^{su}_{g_t}(x)$. Here, $w'$ denotes the pushforward of $w$ to $T^1_yW^{su}_{g_t}(y)$ along the leaves of the center stable foliation of $g_t$.
	\label{inftransgrp}
\end{definition}

We will soon verify that $\mathfrak h^\phi(x)$ is largely independent of the choice of trivialization $\phi$, but it is worth making a few comments first.

\begin{remark}
	Under our hypotheses, the foliation $W^{ws}_{g_t}$ is $C^1$, and so the holonomy it induces between the leaves of the foliaton $W^{su}_{g_t}$ is also $C^1$. This is necessary for the pushforward of $w \in T^1_yW^{su}_{g_t}(y)$ in Definition \ref{inftransgrp} to make sense.
\end{remark}

\begin{remark}
	It is necessary to consider the \textit{relative} infinitesimal holonomy, as we did in Definition \ref{inftransgrp}. As we will see in the course of proving Proposition \ref{conjinvtchoice}, the vector $X_w^\phi(x)$ in Definition \ref{inftransgrp} is extremely sensitive to the choice of trivialization $\phi$. For instance, it is certainly possible for $X_w^\phi(x)$ to be $0$ for all $w \in T^1_xW^{su}_{g_t}(x)$ if the trivialization $\phi$ is built to be constant along the leaves of the strong unstable foliation, and the existence of such trivializations will be extremely helpful in the course of proving Theorem \ref{bptheorem}.
	\label{sensitivity}
\end{remark}

\begin{remark}
	The vectors $X_w^\phi(x)$ and $X_{w'}^\phi(y)$ vary continuously in $x$ and $w$, by Proposition \ref{unstholdif}. However, since $\mathfrak h_\epsilon(x)$ is defined as the linear span of continuously varying vectors, it is only lower semi-continuous. In particular, there can be singular sets where the dimension of $\mathfrak h_\epsilon(x)$ jumps down.
\end{remark}

It turns out that $\mathfrak h^\phi_\epsilon(x)$ is not particularly sensitive to $\epsilon$, though we will not prove this directly. We will show instead that, if $f_t$ is locally $G$-accessible, then $\mathfrak h_\epsilon(x)$ is generically equal to $\mathfrak g$. For most of what follows, we will treat $\epsilon > 0$ as a fixed constant with no particular restrictions. Our first important calculation is that the conjugacy class of $\mathfrak h_\epsilon^\phi(x)$ does not depend on the trivialization $\phi$, if the trivializations are chosen appropriately.

\begin{proposition}
	Fix $\epsilon > 0$, $x \in N$ and trivializations $\phi_{i} \colon \pi^{-1}(V_{i}) \to V_{i} \times F$ for $i = 1,2$. If $B_\epsilon(x) \subset V_{i}$, then
	\[
		\mathfrak h_\epsilon^{\phi_2}(x) = \Ad_{\id_{\phi_1}^{\phi_2}(x)} \left(\mathfrak h_\epsilon^{\phi_1}(x)\right)
	\]
	so long as $\phi_1$ and $\phi_2$ have constant projection to $F$ along each leaf of the strong stable foliation of $f_t$ and each flowline of $f_t$. 
	
	Here, $\id_{\phi_1}^{\phi_2}(x) \colon F \to F$ is used to denote the isometry induced by the identity map $\pi^{-1}(x) \to \pi^{-1}(x)$ with the domain and target identified with $F$ via $\phi_1$ and $\phi_2$ respectively.
	\label{conjinvtchoice}
\end{proposition}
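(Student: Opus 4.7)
The plan is to first write the unstable holonomy in the $\phi_2$ trivialization as a conjugate of the unstable holonomy in $\phi_1$ by the change-of-trivialization cocycle, differentiate at the base point, and then use the invariance of this cocycle forced by our hypotheses to kill the extra terms that would otherwise obstruct conjugacy.

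Set $A(u) \coloneqq \id_{\phi_1}^{\phi_2}(u) \in G$. Directly from the definitions of the unstable holonomy and the change of trivialization, for $u \in W^{su}_{g_t}(x)$ close to $x$ one has
\[
    \Theta^+_{\phi_2,\phi_2}(x,u) = A(u) \circ \Theta^+_{\phi_1,\phi_1}(x,u) \circ A(x)^{-1}.
\]
Differentiating at $u = x$ along $w \in T_x^1 W^{su}_{g_t}(x)$, using $\Theta^+_{\phi_1,\phi_1}(x,x) = \id$ and applying the Leibniz rule in $G$ followed by right translation into $\mathfrak g \cong T_\id G$, I obtain
\[
    X_w^{\phi_2}(x) = (dA)_x(w) \cdot A(x)^{-1} + \Ad_{A(x)}\!\left( X_w^{\phi_1}(x) \right),
\]
with an identical formula at any pair $(y, w')$ in place of $(x, w)$.

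The key step is then to argue that $A$, as a $G$-valued function on a neighborhood of $x$ in $N$, is constant along every weak stable leaf of $g_t$. The weak stable foliation of $f_t$ in $M$ is generated by its strong stable leaves together with its flowlines, and projects under $\pi$ to the weak stable foliation of $g_t$; the hypothesis that $\phi_1$ and $\phi_2$ each have the same $F$-projection along strong stable leaves and flowlines of $f_t$ forces the second component of $\phi_2 \circ \phi_1^{-1}$ — equivalently, $A$ — to be constant along the images of these leaves in $N$, hence on all weak stable leaves. Consequently, for $y \in W^{ss}_\epsilon(x)$ one has $A(y) = A(x)$ in $G$; and for $w'$ the pushforward of $w$ along the center stable holonomy, a curve $\alpha \subset W^{su}_{g_t}(x)$ tangent to $w$ at $x$ and its image $\beta \subset W^{su}_{g_t}(y)$ tangent to $w'$ at $y$ have the property that $\alpha(t)$ and $\beta(t)$ lie on a common weak stable leaf for every small $t$ (by the very definition of the center stable holonomy), so $A(\alpha(t)) = A(\beta(t))$ and differentiating at $t=0$ yields $(dA)_x(w) = (dA)_y(w')$ in $T_{A(x)}G = T_{A(y)}G$.

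Substituting these two facts into the derivative formula for $X_{w'}^{\phi_2}(y)$ and subtracting that for $X_w^{\phi_2}(x)$, the $(dA)\cdot A^{-1}$ terms cancel and I am left with
\[
    X_{w'}^{\phi_2}(y) - X_w^{\phi_2}(x) = \Ad_{A(x)}\!\left( X_{w'}^{\phi_1}(y) - X_w^{\phi_1}(x) \right),
\]
from which the identity $\mathfrak h_\epsilon^{\phi_2}(x) = \Ad_{A(x)}(\mathfrak h_\epsilon^{\phi_1}(x))$ follows by taking the span over all admissible $y, w$. The main obstacle is purely bookkeeping: keeping careful track of which tangent space each derivative lives in, applying the Leibniz rule in $G$ consistently with a fixed right-translation convention, and verifying cleanly that the trivialization hypothesis stated in $M$ really does descend to constancy of $A$ along weak stable leaves in $N$. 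Once these points are arranged, the cancellation is forced by how the center stable holonomy is defined.
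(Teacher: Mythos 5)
Your proof is correct and follows essentially the same approach as the paper: you derive the conjugacy relation $\Theta^+_{\phi_2,\phi_2}(x,u) = A(u)\,\Theta^+_{\phi_1,\phi_1}(x,u)\,A(x)^{-1}$, differentiate at $u=x$ via the Leibniz rule, and then exploit the hypothesis on $\phi_1,\phi_2$ to force $A$ (equivalently $\id_{\phi_1}^{\phi_2}$) to be constant along center stable leaves of $g_t$, which cancels the extra $dA$-terms in the difference $X_{w'}^{\phi_2}(y) - X_w^{\phi_2}(x)$. Your intermediate formula $(dA)_x(w) \cdot A(x)^{-1} + \Ad_{A(x)}(X_w^{\phi_1}(x))$ matches the paper's $(dR)_{A(x)^{-1}}\circ (dA)_x$ formulation after unwinding notation, and your observation that $\alpha(t)$ and $\beta(t)$ share a weak stable leaf is precisely what justifies $(dA)_x(w) = (dA)_y(w')$.
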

\begin{proof}
	We can relate the unstable holonomies between $x$ and $u \in W^{su}_{g_t}(x)$ with respect to $\phi_1$ and $\phi_2$ by
	\begin{equation}
		\Theta^+_{\phi_2,\phi_2} \left( x,u \right) = \id_{\phi_1}^{\phi_2}(u) \circ \Theta^+_{\phi_1,\phi_1}\left( x,u \right) \circ \left( \id_{\phi_1}^{\phi_2}(x) \right)^{-1}
		\label{unstholrel}
	\end{equation}
	by definition. We now simply take the derivative of each side of \eqref{unstholrel} with respect to $u$ at $u = x$; in the notation of Definition \ref{inftransgrp}, this becomes
	\begin{equation}
		\label{firstinft}
		X_w^{\phi_2}(x) = \Ad_{\id_{\phi_1}^{\phi_2}(x)} \left( X_{w\phantom{'}}^{\phi_1}(x) \right) + \left((dR)_{\left(\id_{\phi_1}^{\phi_2}(x)\right)^{-1}} \circ d\left( \id_{\phi_1}^{\phi_2} \right)_x \right)(w)
	\end{equation}
for any $w \in T^1_xW^{su}_{g_t}(x)$, where $dR$ denotes the derivative of right multiplication in $G$. Given any $y \in W^{ss}_{\epsilon}(x)$ and $w'$ corresponding to $w$ as in Definition \ref{inftransgrp}, exactly the same calculation yields
\begin{equation}
		X_{w'}^{\phi_2}(y) = \Ad_{\id_{\phi_1}^{\phi_2}(y)} \left( X_{w'}^{\phi_1}(y) \right) + \left((dR)_{\left(\id_{\phi_1}^{\phi_2}(y)\right)^{-1}} \circ d\left( \id_{\phi_1}^{\phi_2} \right)_y \right)({w'})
		\label{secondinft}
	\end{equation}
	assuming, of course, that $y$ is sufficiently close to $x$ that we are able to use the same trivializations $\phi_1, \phi_2$.
	Since both trivializations are constant along the strong stable foliation of $f_t$ and we chose $y \in W^{ss}_{\epsilon}(x)$, we clearly have $\id_{\phi_1}^{\phi_2}(x) = \id_{\phi_1}^{\phi_2}(y)$ and hence
	\[
		(dR)_{\left(\id_{\phi_1}^{\phi_2}(x)\right)^{-1}} = (dR)_{\left(\id_{\phi_1}^{\phi_2}(y)\right)^{-1}}  
	\]
	as functions $T^1G \to T^1G$. Moreover, since the trivializations are also constant along the flowlines of $f_t$, we see that $\id_{\phi_1}^{\phi_2}$ must be constant along the leaves of the center stable foliation of $g_t$. Hence, we must have
	\[
		\left(d\left( \id_{\phi_1}^{\phi_2} \right)_x\right)(w) = \left( d\left( \id_{\phi_1}^{\phi_2} \right)_y \right)(w')
	\]
	for all $w \in T^1_xW^{su}_{g_t}(x)$. Subtracting \eqref{firstinft} from \eqref{secondinft} and using the fact that $\id_{\phi_1}^{\phi_2}(x) = \id_{\phi_1}^{\phi_2}(y)$, we then get
	\[
		X_{w'}^{\phi_2}(y) - X_{w}^{\phi_2}(x) = \Ad_{\id_{\phi_1}^{\phi_2}(y)} \left( X_{w'}^{\phi_1}(y) \right) - \Ad_{\id_{\phi_1}^{\phi_2}(x)} \left( X_{w \phantom{'}}^{\phi_1}(x) \right) 
	\]
	as desired.
\end{proof}

There is an analogous relation between the $\epsilon$-infinitesimal transitivity groups at points along a flowline of $g_t$, though the expansion of the unstable leaves prevent us from obtain an equality in this case.

\begin{proposition}
	Fix $\epsilon > 0$, $x \in N$ and $t > 0$. Let $\phi_x$ and $\phi_{g_t(x)}$ be trivializations near $x$ and $g_t(x)$ for which $B_\epsilon(x) \subset V_x$ and $B_\epsilon(g_t(x)) \subset V_{g_t(x)}$, and write $h$ for the temporal holonomy
	\[
		h(x) \coloneqq \Hol_{\phi_x}^{\phi_{g_t(x)}}( x, g_t(x) )
	\]
	measured with respect to $\phi_x$ and $\phi_{g_t}(x)$. We then have
	\[
		\Ad_{h(x)} \left( \mathfrak h_\epsilon^{\phi_x}(x) \right) \subset \mathfrak h_\epsilon^{\phi_{g_t(x)}}(g_t(x))
	\] 
	so long as $\phi_x$ and $\phi_{g_t(x)}$ have constant projection to $F$ along each leaf of the strong stable foliation of $f_t$ and each flowline of $f_t$.
	\label{conjinvtflow}
\end{proposition}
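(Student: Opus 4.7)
The plan is to adapt the proof of Proposition \ref{conjinvtchoice}, now using the $f_t$-invariance of the strong unstable foliation of $f_t$ to transport the infinitesimal holonomies from $x$ to $g_t(x)$. I would extend $h$ to a locally defined $G$-valued function by setting $h(p) \coloneqq \Hol_{\phi_x}^{\phi_{g_t(x)}}(p, g_t(p))$ on a small neighborhood of $x$, so that the original $h$ in the statement is $h(x)$. The starting point is the commutation relation
\[
	\Theta^+_{\phi_{g_t(x)},\phi_{g_t(x)}}(g_t(x),g_t(u)) \cdot h(x) = h(u) \cdot \Theta^+_{\phi_x,\phi_x}(x,u)
\]
valid for $u \in W^{su}_{g_t}(x)$ close enough to $x$; both sides describe, for $a \in \pi^{-1}(x)$, the composite map obtained by flowing by $f_t$ and sliding along the strong unstable leaf of $f_t$ in the two possible orders, and these agree by $f_t$-invariance of $W^{su}_{f_t}$.

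Differentiating at $u = \gamma(s)$ with $\gamma(0)=x$, $\gamma'(0)=w \in T_x W^{su}_{g_t}(x)$ via the product rule in $G$ yields the identity
\[
	dR_{h(x)}\bigl(X_{dg_t(w)}^{\phi_{g_t(x)}}(g_t(x))\bigr) = dh_x(w) + dL_{h(x)}\bigl(X_w^{\phi_x}(x)\bigr)
\]
in $T_{h(x)}G$. The identical derivation applied at $y \in W^{ss}_\epsilon(x)$ with tangent vector $w' \in T_y W^{su}_{g_t}(y)$ produces the corresponding identity involving $h(y)$, $dh_y(w')$, $X_{w'}^{\phi_x}(y)$, and $X_{dg_t(w')}^{\phi_{g_t(x)}}(g_t(y))$.

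The key step is to show that $h$ is constant along leaves of the weak stable foliation $W^{ws}_{g_t}$. The $W^{ss}_{f_t}$-invariance of both trivializations forces $h$ to be constant on $W^{ss}_{g_t}$-leaves (as in Proposition \ref{conjinvtchoice}), while the $f_t$-flowline invariance of both trivializations forces $h$ to be constant along $g_t$-orbits; together these give $W^{ws}_{g_t}$-invariance. Value agreement $h(x) = h(y)$ follows from $y \in W^{ss}_{g_t}(x) \subset W^{ws}_{g_t}(x)$, and since $h$ descends to the local leaf space of $W^{ws}_{g_t}$ and $w'$ is by construction the $W^{ws}_{g_t}$-pushforward of $w$, the derivatives agree: $dh_x(w) = dh_y(w')$.

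Subtracting the two differentiated identities, canceling the $dh$ terms, and applying $dR_{h(x)^{-1}}$ to transport back to $\mathfrak g$ produces
\[
	X_{dg_t(w')}^{\phi_{g_t(x)}}(g_t(y)) - X_{dg_t(w)}^{\phi_{g_t(x)}}(g_t(x)) = \Ad_{h(x)}\bigl(X_{w'}^{\phi_x}(y) - X_w^{\phi_x}(x)\bigr).
\]
The forward contraction of $W^{ss}_{g_t}$ gives $g_t(y) \in W^{ss}_\epsilon(g_t(x))$, and $g_t$-invariance of $W^{ws}_{g_t}$ makes $dg_t(w')$ the $W^{ws}_{g_t}$-pushforward of $dg_t(w)$, so the left side is a spanning element of $\mathfrak h_\epsilon^{\phi_{g_t(x)}}(g_t(x))$; since the vectors $X_{w'}^{\phi_x}(y) - X_w^{\phi_x}(x)$ span $\mathfrak h_\epsilon^{\phi_x}(x)$, the claimed inclusion follows by linearity. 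The main obstacle is the $W^{ws}_{g_t}$-invariance of $h$: the $W^{ss}_{f_t}$-invariance of the trivializations that sufficed in Proposition \ref{conjinvtchoice} only gives $W^{ss}_{g_t}$-invariance of $h$, and the extra $f_t$-flowline invariance hypothesis is precisely what is needed to upgrade this to match the $W^{ws}_{g_t}$-pushforward definition of $w'$.
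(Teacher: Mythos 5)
Your proposal is correct and follows the same route as the paper's proof: you derive the commutation relation $\Theta^+_{\phi_{g_t(x)},\phi_{g_t(x)}}(g_t(x),g_t(u)) \cdot h(x) = h(u) \cdot \Theta^+_{\phi_x,\phi_x}(x,u)$ (the paper obtains it via Proposition \ref{holonomyprop}, you obtain it directly from the $f_t$-invariance of $W^{su}_{f_t}$ --- these are the same observation), differentiate as in Proposition \ref{conjinvtchoice}, and use the stable-leaf and flowline constancy of the trivializations to make the $dh$ terms cancel. Your explicit identification of where the flowline-invariance hypothesis is actually used --- namely to upgrade $W^{ss}_{g_t}$-constancy of $h$ to $W^{ws}_{g_t}$-constancy, which is what is needed since $w'$ is defined by center-stable pushforward --- is a detail the paper glosses over with the phrase "simply repeating our calculations."
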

\begin{proof}
	By Proposition \ref{holonomyprop}, we have
	\[
		\Theta^+_{\phi_2,\phi_2}(g_t(x), g_t(u)) = h(u) \circ \Theta^+_{\phi_1, \phi_1}(x,u) \circ \left( h(x) \right)^{-1}
	\]
	so long as $u$ is sufficiently close to $x$. Noting the resemblance to \eqref{unstholrel}, simply repeating our calculations in Proposition \ref{conjinvtchoice} yields
	\[
		X_{w'}^{\phi_{g_t(x)}}(g_t(y)) - X_{w}^{\phi_{g_t(x)}}(g_t(x)) = \Ad_{h(x)} \left( X_{w'}^{\phi_x}(y) \right) - \Ad_{h(x)} \left( X_{w\phantom{'}}^{\phi_x}(x) \right)
	\]
	for all $y \in W^{ss}_{\epsilon}(x)$ and all $w \in T^1_xW^{su}_{g_t}(x)$. This completes the proof; note that we do not obtain equality this time since the strong stable leaves for $g_t$ contract, and there will be $y' \in W^{ss}_{\epsilon}(g_t(x))$ that are not of the form $g_t(y)$ for $y \in W^{ss}_\epsilon(x)$.
\end{proof}

Note that Proposition \ref{conjinvtflow} only yields an inclusion of the $\epsilon$-infinitesimal transitivty groups, and only in forward time. Our goal is to show that $\mathfrak h_\epsilon^\phi(x)$ is exactly $\mathfrak g$ at every $x \in N$; unfortunately, the proof of Proposition \ref{conjinvtflow} suggests that even the dimension of $\mathfrak h_\epsilon(x)$ may fail to be constant in general. Fortunately, given the topological transitivity of $g_t$, what we have proven so far is enough to show that the dimension is constant on a large set.

In light of Proposition \ref{conjinvtchoice}, we can be somewhat cavalier in specifying the trivialization $\phi$ used in defining $\mathfrak h_\epsilon^\phi(x)$, if we are solely concerned with the dimension and restrict our attention to trivializations that satisfy the hypotheses of the proposition. We will henceforth always assume that every trivialization we work with has constant projection to $F$ along the strong stable leaves and flowlines of $f_t$.

\begin{corollary}
	Fix a collection of trivializations $\phi_1, \ldots, \phi_k$ defined over a cover $V_1, \ldots, V_k$ of $N$, and let $\epsilon > 0$ be the Lebesgue number of this cover. Then $\dim \mathfrak h_\epsilon^*(\cdot)$ attains its maximum value on an open, dense subset of full measure.
	\label{maxdim}
\end{corollary}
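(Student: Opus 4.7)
The plan is to set $d \coloneqq \max_{x \in N} \dim \mathfrak h_\epsilon^*(x)$, which is achieved since the dimension is a non-negative integer bounded by $\dim \mathfrak g$, and to consider the set
\[
A \coloneqq \left\{ x \in N : \dim \mathfrak h_\epsilon^*(x) = d \right\}.
\]
I will then show that $A$ is simultaneously open, forward invariant under $g_t$, and of full $\nu$-measure; density follows immediately from full support of $\nu$.

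For openness, the key is the lower semi-continuity of $x \mapsto \dim \mathfrak h_\epsilon^*(x)$ noted in the remark after Definition \ref{inftransgrp}. Concretely, near a point $x_0 \in A$, I would choose $d$ generators of the form $X_{w'_j}^\phi(y_j) - X_{w_j}^\phi(x_0)$ spanning $\mathfrak h_\epsilon^\phi(x_0)$, and perturb them continuously in the base point using the $C^1$-regularity of the weak stable and strong unstable foliations (which underpins Proposition \ref{unstholdif}); linear independence is preserved on a neighborhood, forcing $\dim \mathfrak h_\epsilon^*(\cdot) \geq d$ there and hence equality by maximality of $d$. Forward invariance is an immediate consequence of Proposition \ref{conjinvtflow}: since $\Ad_{h(x)}$ is an isomorphism of $\mathfrak g$, the inclusion
\[
\Ad_{h(x)}\left( \mathfrak h_\epsilon^{\phi_x}(x) \right) \subset \mathfrak h_\epsilon^{\phi_{g_t(x)}}(g_t(x))
\]
has a $d$-dimensional left-hand side whenever $x \in A$, and maximality of $d$ forces equality on the right, so $g_t(x) \in A$.

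For full measure, I would invoke the classical fact that equilibrium states for H\"older potentials on transitive Anosov flows are ergodic with full support (following Bowen and Ruelle \cite{bowenruelle}, together with the local product structure with bounded densities noted in the preceding remarks). Non-emptiness of the open set $A$ and full support then give $\nu(A) > 0$, while forward invariance of $A$ combined with $g_t$-invariance of $\nu$ gives $\nu(g_t^{-1}(A) \setminus A) = 0$ for every $t > 0$, so $A$ is $g_t$-invariant modulo $\nu$-null sets. Ergodicity forces $\nu(A) = 1$, and the complement $N \setminus A$ is then a closed $\nu$-null set, which by full support of $\nu$ has empty interior, so $A$ is dense.

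I do not anticipate any serious obstacle: essentially all of the geometric content has already been packaged into Propositions \ref{conjinvtchoice} and \ref{conjinvtflow}, and what remains is a standard application of lower semi-continuity and ergodicity. The only step requiring any real care is openness, where one must make the perturbation of the generators $X_w^\phi(\cdot)$ precise despite the fact that the parameter spaces (the local stable leaf through $x$ and the unit tangent sphere to the unstable leaf at $x$) themselves vary with the base point; the $C^1$-regularity of both foliations reduces this to a routine local parametrization.
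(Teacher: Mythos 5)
Your argument is correct and follows essentially the same route as the paper: lower semi-continuity gives openness, Proposition \ref{conjinvtflow} gives forward invariance (since $\Ad$ is an isomorphism), and ergodicity together with full support of the equilibrium state yields full measure. The only minor divergence is cosmetic: the paper deduces density directly from topological transitivity (the forward $g_t$-orbit of the initial open set $W$ is dense), whereas you deduce it from full measure via the observation that a closed $\nu$-null set has empty interior when $\nu$ has full support; you are also slightly more careful than the paper in phrasing the ergodicity step in terms of the forward-invariant set $A$ rather than asserting the non-invariant function $\dim\mathfrak h_\epsilon^*(\cdot)$ is $\nu$-a.e.\ constant outright.
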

\begin{proof}
	Let $x \in N$ be a point at which $\mathfrak h_\epsilon^*(x)$ has maximal dimension. Since $\mathfrak h_\epsilon^*(\cdot)$ is lower semi-continuous, it has maximal dimension on an open neighborhood $W$ containing $x$. By Proposition \ref{conjinvtflow}, $\mathfrak h_\epsilon^*(\cdot)$ therefore has maximal dimension on an open set containing the forward orbit of $g_t$. This is a dense set if $g_t$ is topologically transitive.

	Since $g_t$ is ergodic and $\dim \mathfrak h_\epsilon^*(\cdot)$ is measurable, it must be constant almost everywhere. The measure $\nu$ is an equilibrium measure with a H\"older potential and therefore has full support; hence, the open and dense set on which $\dim \mathfrak h_\epsilon^*(\cdot)$ has maximal dimension must also have full measure.
\end{proof}

Our next objective is to relate the $\epsilon$-infinitesimal transitivity groups between points along a leaf of the strong unstable foliation of $g_t$. If we indeed had equality in Proposition \ref{conjinvtflow}, this would be a relatively straightforward application of Proposition \ref{holonomyprop}. The lack of equality makes such an approach impossible, but we can still argue as in Corollary \ref{maxdim}.

\begin{lemma}
	Fix $\epsilon > 0$, $x \in N$ and $y \in W^{su}_{g_t}(x)$, along with trivializations $\phi_x$ and $\phi_y$ for which we have $B_{2\epsilon}(x) \subset V_x$ and $B_\epsilon(y) \subset V_y$. If $x$ is backwards-recurrent under $g_t$ and $\dim \mathfrak h_\epsilon^*(x)$ is maximal, then
	\[
		\mathfrak h_\epsilon^{\phi_y}(y) = \Ad_{\Theta^+_{\phi_x,\phi_y}(x,y)}\left( \mathfrak h_\epsilon^{\phi_x}(x) \right)
	\]
	and, in particular, $\dim \mathfrak h^*_\epsilon(\cdot)$ is constant on $W^{su}_{g_t}(x)$.
	\label{unsttrans}
\end{lemma}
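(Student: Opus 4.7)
My plan is to use the backward recurrence of $x$ to contract $y$ and $x$ together along the strong unstable leaf under the backward flow, where Proposition~\ref{conjinvtflow} becomes sharp at points of maximal dimension, and then to convert the resulting unstable holonomy into a limit of temporal holonomies via Proposition~\ref{holonomyprop}.

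First I would choose, using the backward recurrence of $x$, a sequence $t_n \to -\infty$ with $g_{t_n}(x) \to x$. Since $y \in W^{su}_{g_t}(x)$ and strong unstable leaves contract exponentially under backward flow, $g_{t_n}(y) \to x$ as well. The lower semi-continuity of $\dim \mathfrak h^*_\epsilon(\cdot)$, combined with maximality at $x$, forces $\dim \mathfrak h^*_\epsilon(g_{t_n}(x))$ and $\dim \mathfrak h^*_\epsilon(g_{t_n}(y))$ to attain the maximal value for all large $n$; applying Proposition~\ref{conjinvtflow} forward from $g_{t_n}(y)$ to $y$ then yields maximality at $y$, which (iterating along the leaf) establishes the ``in particular'' clause.

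Second, for $n$ large enough that $g_{t_n}(x)$ and $g_{t_n}(y)$ lie in a common chart, I would choose a single trivialization $\phi_n$ there satisfying the compatibility conditions of Propositions~\ref{conjinvtchoice} and \ref{conjinvtflow}, and write $h_n^* := \Hol_{\phi_n}^{\phi_*}(g_{t_n}(*), *)$ for $* \in \{x, y\}$. Since $\Ad$ is a linear isomorphism and both sides of the inclusions in Proposition~\ref{conjinvtflow} have the same (maximal) dimension, these inclusions promote to equalities
\[
\mathfrak h^{\phi_x}_\epsilon(x) = \Ad_{h_n^x}\!\bigl(\mathfrak h^{\phi_n}_\epsilon(g_{t_n}(x))\bigr), \qquad \mathfrak h^{\phi_y}_\epsilon(y) = \Ad_{h_n^y}\!\bigl(\mathfrak h^{\phi_n}_\epsilon(g_{t_n}(y))\bigr).
\]
Combining this with the intermediate identity appearing in the proof of Proposition~\ref{holonomyprop},
\[
\Theta^+_{\phi_x, \phi_y}(x, y) = h_n^y \circ \Theta_n \circ (h_n^x)^{-1}, \qquad \Theta_n := \Theta^+_{\phi_n, \phi_n}(g_{t_n}(x), g_{t_n}(y)),
\]
reduces the lemma to showing $\Ad_{\Theta_n}\!\bigl(\mathfrak h^{\phi_n}_\epsilon(g_{t_n}(x))\bigr) = \mathfrak h^{\phi_n}_\epsilon(g_{t_n}(y))$ for some large $n$, with $\Theta_n \to e$ by the $C^1$ regularity of the unstable holonomy (Proposition~\ref{unstholdif}).

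The main obstacle is this last identity, which is the lemma itself restated for a pair of arbitrarily close points sharing a common trivialization. My plan for this step is to mimic the proof of Proposition~\ref{conjinvtflow} directly, using the leafwise transitivity $\Theta^+(p, r) = \Theta^+(q, r) \circ \Theta^+(p, q)$ of the unstable holonomy on the common leaf $W^{su}_{g_t}(x)$ in place of the flow-commutation identity. Pairing each $z' \in W^{ss}_\epsilon(g_{t_n}(y))$ with its counterpart $z \in W^{ss}_\epsilon(g_{t_n}(x))$ under the $C^1$ center-stable holonomy, differentiating in the unstable direction and subtracting should express each generator $X^{\phi_n}_{w'}(z') - X^{\phi_n}_w(g_{t_n}(y))$ of the right-hand side as $\Ad_{\Theta_n}$ applied to a corresponding generator at $g_{t_n}(x)$, yielding the inclusion $\Ad_{\Theta_n}(\mathfrak h^{\phi_n}_\epsilon(g_{t_n}(x))) \subseteq \mathfrak h^{\phi_n}_\epsilon(g_{t_n}(y))$, which promotes to equality by equality of (maximal) dimensions. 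The subtlety is that the derivatives of the unstable holonomy are taken at non-identity basepoints, producing right-translation correction terms; the redundancy built into the definition of $\mathfrak h^*_\epsilon$ as a \emph{difference} of infinitesimal holonomies ought to cancel these corrections, in direct analogy with the cancellation of trivialization-change terms in the proofs of Propositions~\ref{conjinvtchoice} and \ref{conjinvtflow}.
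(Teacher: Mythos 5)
Your first two steps track the paper's strategy closely: backward recurrence, propagation of maximality by lower semi-continuity, promotion of the inclusions in Proposition~\ref{conjinvtflow} to equalities, and the conjugacy identity $\Theta^+_{\phi_x,\phi_y}(x,y) = h_n^y \circ \Theta_n \circ (h_n^x)^{-1}$ from Proposition~\ref{holonomyprop}. The ``in particular'' clause is handled correctly.

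The gap is in your third step. You reduce to proving $\Ad_{\Theta_n}(\mathfrak h^{\phi_n}_\epsilon(g_{t_n}(x))) = \mathfrak h^{\phi_n}_\epsilon(g_{t_n}(y))$ for some single, finite $n$ --- and as you note, this is the lemma itself, just for a nearby pair. The computation you propose to close this does not work. The generators $X_w(g_{t_n}(y))$ are by definition derivatives of $\Theta^+(g_{t_n}(y), \cdot)$ at the basepoint $g_{t_n}(y)$; leafwise transitivity $\Theta^+(g_{t_n}(y), u) = \Theta^+(g_{t_n}(x), u) \circ \Theta_n^{-1}$ converts this to a derivative of $\Theta^+(g_{t_n}(x), \cdot)$, but still evaluated at $u = g_{t_n}(y)$, not at $u = g_{t_n}(x)$. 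Relating the latter derivative to $X_w(g_{t_n}(x))$ would require Taylor expanding $\Theta^+(g_{t_n}(x), \cdot)$ to second order, which exceeds the $C^1$ regularity available; the resulting error terms do not appear in a matched pair that cancels under the difference, unlike the trivialization-change terms in Propositions~\ref{conjinvtchoice} and \ref{conjinvtflow}, where constancy of $\id_{\phi_1}^{\phi_2}$ along the center-stable leaf is precisely what makes the cancellation exact. The analogous constancy is false for $\Theta^+$ --- indeed, the nontrivial variation of $\Theta^+$ is the content of $\mathfrak h_\epsilon$.

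The paper avoids this entirely by never asserting an exact identity at any finite $n$. Instead, it observes that since $\mathfrak h_\epsilon^{\phi_x}(\cdot)$ is continuous (in fact uniformly so after restricting $W$) on the recurrence neighborhood and $d_N(g_{t_n}(x), g_{t_n}(y)) \to 0$, the Grassmannian distance $d_{\Gr}(\mathfrak h_\epsilon^{\phi_x}(g_{t_n}(y)), \mathfrak h_\epsilon^{\phi_x}(g_{t_n}(x))) \to 0$. The conjugacy identity then transports this to $d_{\Gr}(\mathfrak h_\epsilon^{\phi_y}(y), \Ad_{h^{(n)}(y)(h^{(n)}(x))^{-1}}(\mathfrak h_\epsilon^{\phi_x}(x))) \to 0$, and since $h^{(n)}(y)(h^{(n)}(x))^{-1} \to \Theta^+_{\phi_x,\phi_y}(x,y)$ with $\Ad$ continuous, the equality follows in the limit. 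Replacing your third step with this soft approximation argument would repair the proof.
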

\begin{proof}
	Since $\dim \mathfrak h^*_\epsilon(\cdot)$ is lower semi-continuous, there is an open set $W \subset B_\epsilon(x)$ on which it is maximal. Because $x$ is backwards recurrent, we can find a monotonic sequence of times $T = \left\{ t_n \right\}$ with $t_n \to -\infty$ for which $g_{t_n}(x) \in W$ for each $n > 0$. Moreover, we can suppose that $t_1$ is large enough that we also have $g_{t_n}(y) \in W$ for each $n > 0$.
	Now, write
	\[
		h^{(n)}(x) \coloneqq \Hol_{\phi_x}^{\phi_x}(g_{t_n}(x),g_{t_{n-1}}(x)) \circ \ldots \circ \Hol_{\phi_x}^{\phi_x}\left( g_{t_1}(x),x \right)
	\]
	and
	\[
		h^{(n)}(y) \coloneqq \Hol_{\phi_x}^{\phi_x}(g_{t_n}(y),g_{t_{n-1}}(y)) \circ \ldots \circ \Hol_{\phi_x}^{\phi_y}\left( g_{t_1}(y),y \right)
	\]
	for the $n$-step holonomies at $g_{t_n}(x)$ and $g_{t_n}(y)$. Since we have $g_{t_n}(x), g_{t_n}(y) \in W$ by construction, we have
	\[
		\Ad_{h^{(n)}(x)}\left( \mathfrak h_\epsilon^{\phi_x}\left( g_{t_n}(x) \right) \right) = \mathfrak h_\epsilon^{\phi_x}(x)
	\]
	and
	\[
		\Ad_{h^{(n)}(y)}\left( \mathfrak h_\epsilon^{\phi_x}\left( g_{t_n}(y) \right) \right) = \mathfrak h_\epsilon^{\phi_y}(y)
	\]
	by Proposition \ref{conjinvtflow} -- note that we have implicitly used the fact that $B_{2\epsilon}(x) \subset V_x$ in writing $\mathfrak h_\epsilon^{\phi_x}\left( g_{t_n}(x) \right)$ and $\mathfrak h_\epsilon^{\phi_x}\left( g_{t_n}(y) \right)$, where $V_x$ is the chart over which $\phi_x$ is defined. By rearranging these equations, we then have
	\[
		d_{\Gr(\dim \mathfrak h^*_\epsilon(x),\mathfrak g)}\left( \mathfrak h_\epsilon^{\phi_y}(y), \Ad_{h^{(n)}(y) \left( h^{(n)}(x) \right)^{-1}}\left( \mathfrak h_\epsilon^{\phi_x}(x) \right)  \right) = d_{\mathfrak \Gr(\dim \mathfrak h^*_\epsilon(x),\mathfrak g)}\left( \mathfrak h_\epsilon^{\phi_x}\left( g_{t_n}(y) \right), \mathfrak h_\epsilon^{\phi_x}\left( g_{t_n}(x) \right) \right)
	\]
	where distances are measured in the standard metric on the Grassmannian of $\left(\dim \mathfrak h^*_\epsilon(x)\right)$-dimensional subspaces of $\mathfrak g$. Since $\mathfrak h_\epsilon^{\phi_x}(\cdot)$ is lower semi-continuous and has maximal dimension on $W$, it must be continuous on $W$. Up to passage to the interior of a compact subset of $W$, we can assume that $\mathfrak h_\epsilon(\cdot)$ is uniformly continuous on $W$. Since $d_N(g_{t_n}(y),g_{t_n}(x)) \to 0$ as $n \to \infty$, we must then have
	\[
		d_{\mathfrak \Gr(\dim \mathfrak h^*_\epsilon(x),\mathfrak g)}\left( \mathfrak h_\epsilon^{\phi_x}\left( g_{t_n}(y) \right), \mathfrak h_\epsilon^{\phi_x}\left( g_{t_n}(x) \right) \right)
	\]
	as $n \to \infty$. This yields
	\[
		\mathfrak h_\epsilon^{\phi_y}(y) = \lim_{n \to \infty} \Ad_{h^{(n)}(y) \left( h^{(n)}(x) \right)^{-1}}\left( \mathfrak h_\epsilon^{\phi_x}(x)  \right)
	\]
	at which point we simply observe that $\Ad_g(\cdot)$ is continuous in $g$ and that $h^{(n)}(y) \left(h^{(n)}(x)\right)^{-1}$ converges to $\Theta^+_{\phi_x,\phi_y}(x,y)$ by Proposition \ref{holonomyprop}.
\end{proof}

In addition to the preceding lemma, we will require its analogue for the stable holonomies. The proof is identical, and we will not repeat it.

\begin{lemma}
	Fix $\epsilon > 0$, $x \in N$ and $y \in W^{ss}_{g_t}(x)$, along with trivializations $\phi_x$ and $\phi_y$ for which we have $B_{2\epsilon}(x) \subset V_x$ and $B_\epsilon(y) \subset V_y$. If $x$ is forwards-recurrent under $g_t$ and $\dim \mathfrak h^*_\epsilon(x)$ is maximal, then
	\[
		\mathfrak h_\epsilon^{\phi_y}(y) = \Ad_{\Theta^-_{\phi_x,\phi_y}(x,y)}\left( \mathfrak h_\epsilon^{\phi_x}(x) \right)
	\]
	and, in particular, $\dim \mathfrak h^*_\epsilon(\cdot)$ is constant on $W^{ss}_{g_t}(x)$.
	\label{stabtrans}
\end{lemma}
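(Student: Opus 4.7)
The plan is to mirror the argument used in Lemma \ref{unsttrans}, with the appropriate reversals: replace backwards recurrence and contraction of unstable leaves under $g_{-t}$ by forward recurrence and contraction of stable leaves under $g_t$, and invoke the stable analogue of Proposition \ref{holonomyprop} at the end. First I would fix an open set $W \subset B_\epsilon(x)$ on which $\dim \mathfrak h_\epsilon^*(\cdot)$ attains its maximal value, using lower semi-continuity together with the assumption that $\dim \mathfrak h_\epsilon^*(x)$ is already maximal. By forward recurrence of $x$, I would then select a sequence of times $t_n \to +\infty$ such that $g_{t_n}(x) \in W$, and observe that since $d_N(g_{t_n}(x), g_{t_n}(y)) \to 0$ as the strong stable leaves contract in forward time, we may also take $t_1$ large enough that $g_{t_n}(y) \in W$ for all $n$.

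Next I would set up the $n$-step temporal holonomies
\[
h^{(n)}(x) \coloneqq \Hol_{\phi_x}^{\phi_x}(g_{t_{n-1}}(x), g_{t_n}(x)) \circ \cdots \circ \Hol_{\phi_x}^{\phi_x}(x, g_{t_1}(x))
\]
and the analogous $h^{(n)}(y)$, starting from the trivialization $\phi_y$ at $y$ and transitioning to $\phi_x$ once $g_{t_k}(y)$ lies inside $V_x$ (the hypothesis $B_{2\epsilon}(x) \subset V_x$ ensures we may do this for all sufficiently large $k$). Applying Proposition \ref{conjinvtflow} along each step and using that both endpoints lie in $W$, so that the resulting inclusions are forced to be equalities by maximality of dimension, I would obtain
\[
\Ad_{h^{(n)}(x)}\!\left(\mathfrak h_\epsilon^{\phi_x}(x)\right) = \mathfrak h_\epsilon^{\phi_x}(g_{t_n}(x)), \qquad \Ad_{h^{(n)}(y)}\!\left(\mathfrak h_\epsilon^{\phi_y}(y)\right) = \mathfrak h_\epsilon^{\phi_x}(g_{t_n}(y)).
\]
Rearranging and measuring in the Grassmannian of $(\dim \mathfrak h_\epsilon^*(x))$-planes in $\mathfrak g$, I would compare
\[
d_{\Gr}\!\left( \mathfrak h_\epsilon^{\phi_y}(y),\; \Ad_{(h^{(n)}(y))^{-1} h^{(n)}(x)}\!\left(\mathfrak h_\epsilon^{\phi_x}(x)\right) \right)
\]
with $d_{\Gr}\!\left(\mathfrak h_\epsilon^{\phi_x}(g_{t_n}(y)), \mathfrak h_\epsilon^{\phi_x}(g_{t_n}(x))\right)$, which tends to $0$ by uniform continuity of $\mathfrak h_\epsilon^{\phi_x}(\cdot)$ on a compact subset of $W$.

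Finally, I would apply the stable-holonomy analogue of Proposition \ref{holonomyprop}, which asserts
\[
\Theta^-_{\phi_x,\phi_y}(x,y) = \lim_{n \to \infty} \left(h^{(n)}(y)\right)^{-1} h^{(n)}(x),
\]
and whose proof is formally identical to Proposition \ref{holonomyprop} with $t_n \to +\infty$ replacing $t_n \to -\infty$ and strong stable contraction replacing strong unstable contraction. Combining this with the Grassmannian convergence above and the continuity of $\Ad$ in its argument, I would conclude
\[
\mathfrak h_\epsilon^{\phi_y}(y) = \Ad_{\Theta^-_{\phi_x,\phi_y}(x,y)}\!\left(\mathfrak h_\epsilon^{\phi_x}(x)\right),
\]
which also forces $\dim \mathfrak h_\epsilon^*(\cdot)$ to be constant on $W^{ss}_{g_t}(x)$. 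The main obstacle, which is mild, is just bookkeeping the compatibility of trivializations along the orbit when $g_{t_n}(y)$ enters $V_x$; this is handled exactly as in Lemma \ref{unsttrans} thanks to the hypothesis $B_{2\epsilon}(x) \subset V_x$, $B_\epsilon(y) \subset V_y$, and Proposition \ref{conjinvtchoice}, which guarantees that switching between admissible trivializations alters $\mathfrak h_\epsilon^*(\cdot)$ only by a conjugation absorbed into the definition of the holonomies.
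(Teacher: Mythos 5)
Your proposal is correct and is essentially the paper's own argument: the paper states explicitly that the proof of this lemma is identical to that of Lemma \ref{unsttrans} with stable replacing unstable, forward recurrence replacing backward recurrence, and $t_n \to +\infty$ replacing $t_n \to -\infty$, which is precisely what you carry out. You also correctly track the one genuine sign change --- that in the stable case the $n$-step holonomies run forward from $x$ to $g_{t_n}(x)$, so the relevant conjugating element is $(h^{(n)}(y))^{-1}h^{(n)}(x)$ rather than $h^{(n)}(y)(h^{(n)}(x))^{-1}$, converging to $\Theta^-_{\phi_x,\phi_y}(x,y)$ by the stable analogue of Proposition \ref{holonomyprop}.
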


Now that we have Lemma \ref{unsttrans} and Lemma \ref{stabtrans} to connect the $\epsilon$-infinitesimal transitivity groups to the unstable and stable holonomies respectively, we can achieve the first major goal of this section: translating the local accessibility of $f_t$ into a statement about $\mathfrak h_\epsilon^*(\cdot)$. To begin, we will show that if $f_t$ is $G$-accessible, then $\mathfrak h_\epsilon^\phi(x)$ must be $\Ad_G$-invariant for any bi-recurrent $x \in N$.

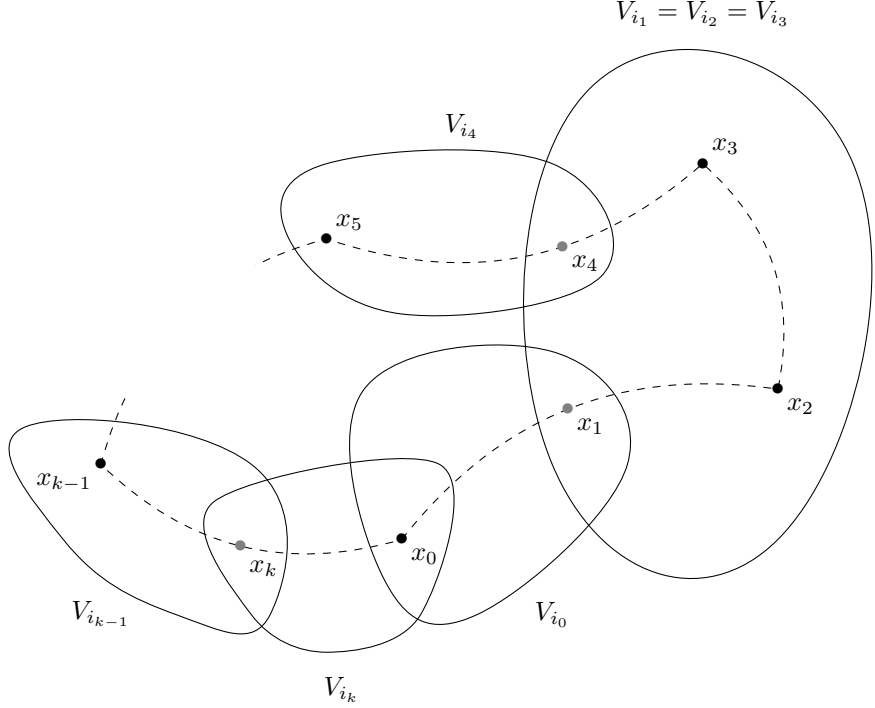
\begin{figure}[tb]
	\centering
	\begin{tikzpicture}
		\node at (0,0) {$\bullet$};
		\draw[dashed] (0,0) to[bend left] node[pos=0.5] (X1) {\textcolor{gray}{$\bullet$}} (5,2) node {$\bullet$} to[bend right] (4,5) node {$\bullet$} to[bend left] node[pos=0.4] (X4) {\textcolor{gray}{$\bullet$}} (-1,4) node (A) {$\bullet$} to[bend right] (-4,1) node (B) {$\bullet$} to[bend right] node[pos=0.5] (Xk) {\textcolor{gray}{$\bullet$}} (0,0);
		\draw plot[smooth cycle, tension=0.7] coordinates {(-0.25,-0.5) (-0.5,2) (2,2.5) (3,1) (1,-1)};
		\draw plot[smooth cycle, tension=0.9] coordinates {(2,1) (2.5,6) (6,5) (5,0)};
		\draw[opacity=0] (A) to[bend right] node[pos=0.5] (C) {} (B);
		\node[overlay, circle, fill=white, inner sep=25pt] at (C) {};
		\draw plot[smooth cycle, tension=0.7] coordinates {(-1.5,4) (-1,5) (2,5) (2.65,3.5) (0,3)};
		\draw plot[smooth cycle, tension=0.8] coordinates {(-5,1.5) (-2,1) (-1.75,-1) (-3,-1) (-4.5,0)};
		\draw plot[smooth cycle, tension=0.6] coordinates {(-2.5,0.5) (0.5,1) (0.25,-1) (-1,-1.5) (-2,-1)};
		\node[below right] at (X4) {$x_4$};
		\node[below right] at (0,0) {$x_0$};
		\node[below right] at (X1) {$x_1$};
		\node[below right] at (5,2) {$x_2$};
		\node[above right] at (4,5) {$x_3$};
		\node[above right] at (-1,4) {$x_5$};
		\node[below left] at (-4,1) {$x_{k-1}$};
		\node[below right, yshift=-2pt] at (Xk) {$x_k$};
		\node[overlay] at (2,-1) {$V_{i_0}$};
		\node[overlay] at (4,7) {$V_{i_1} = V_{i_2} = V_{i_3}$};
		\node[overlay] at (0.8,5.5) {$V_{i_4}$};
		\node[overlay] at (-4,-1) {$V_{i_{k-1}}$};
		\node at (-0.8,-2) {$V_{i_k}$};
	\end{tikzpicture}
	\caption{A refined stable-unstable sequence $x_0, x_1, \ldots, x_k, x_{k+1} = x_0$. Any stable-unstable sequence can be refined so that for each $0 \leq n \leq k$, there is a trivialization $\phi_{x_n}$ over a chart $V_{x_n}$ containing both $x_n$ and $x_{n+1}$. This refinement has the same total holonomy.}
	\label{fig:bpholonomy}
\end{figure}

\begin{theorem}
	Fix $\epsilon > 0$, $x \in N$ and a trivialization $\phi_x$ for which $B_\epsilon(x) \subset V_x$. Suppose that $2\epsilon$ is smaller than the Lebesgue number of a finite cover $\left\{ V_i \right\}$ of $N$ corresponding to trivializations $\left\{ \phi_i \right\}$. If $x$ is bi-recurrent under $g_t$, $\dim \mathfrak h_\epsilon^{\phi_x}(x)$ is maximal and $f_t$ is $G$-accessible, then $\mathfrak h_\epsilon^{\phi_x}(x)$ is $\Ad_G$-invariant.
	\label{bptheorem}
\end{theorem}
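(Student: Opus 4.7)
The plan is to reduce the theorem to a statement about closed subgroups of $G$. Set $\mathfrak H := \mathfrak h_\epsilon^{\phi_x}(x)$ and consider the stabilizer
\[
	K := \{ g \in G : \Ad_g(\mathfrak H) = \mathfrak H \},
\]
which is a closed subgroup of $G$; the conclusion of the theorem amounts to $K = G$, so since $K$ is closed it suffices to exhibit a dense subset of $G$ contained in $K$. Let $N^* \subset N$ consist of those $y \in N$ that are bi-recurrent under $g_t$ and at which $\dim \mathfrak h_\epsilon^*(y)$ is maximal. By Poincar\'e recurrence together with Corollary~\ref{maxdim}, $N^*$ is a dense $G_\delta$ of full $\nu$-measure; in particular $x \in N^*$ by hypothesis. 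I will argue that (i) every $h \in G$ realized as the total holonomy of a stable-unstable path $x = x_0, x_1, \ldots, x_k = x$ with each intermediate $x_i$ in $N^*$ belongs to $K$, and (ii) the set of such $h$'s is dense in $G$.

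For (i), the Lebesgue-number condition on $\epsilon$ permits $\phi_x$ to serve as a valid trivialization near every $x_i$ in the chart sense required by Lemmas~\ref{unsttrans} and~\ref{stabtrans}. At each step, the appropriate lemma applied at $x_i \in N^*$ (backwards-recurrence for the unstable legs, forwards-recurrence for the stable legs, both supplied by bi-recurrence) yields
\[
	\mathfrak h_\epsilon^{\phi_x}(x_{i+1}) = \Ad_{\Theta_i^{\pm}(x_i, x_{i+1})}\bigl(\mathfrak h_\epsilon^{\phi_x}(x_i)\bigr),
\]
where $\Theta_i^{\pm}$ is the stable or unstable holonomy along the $i$th leg of the path, both computed with respect to $\phi_x$. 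Composing these identities around the closed loop and using $x_0 = x_k = x$ gives $\mathfrak H = \Ad_h(\mathfrak H)$, so that $h \in K$.

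For (ii), fix an arbitrary $g \in G$. By $G$-accessibility there is some stable-unstable path in $B_\epsilon(x)$ whose total holonomy equals $g$. I then perturb this path to a nearby one with all intermediate points in $N^*$: using the local product structure of $\nu$ together with the density of $N^*$ on $\nu^u$- and $\nu^s$-a.e.\ leaf (a Fubini consequence of the product structure and $\nu(N^*) = 1$), each intermediate $x_i$ can be replaced by a nearby point of $N^*$, adjusting the subsequent leg and inserting short auxiliary stable or unstable segments via the local bracket operation as needed to restore the stable-unstable structure of the loop at $x$. Since the unstable and stable holonomies vary continuously in the endpoints (Proposition~\ref{unstholdif} and its analogue), the total holonomy of the perturbed path is an element $g' \in G$ arbitrarily close to $g$, and $g' \in K$ by (i). Thus $K$ is dense, and by closedness $K = G$. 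The main obstacle is the rerouting step in (ii): one must realize arbitrarily small perturbations of the path through $N^*$ without leaving $B_\epsilon(x)$ while controlling the drift of the total holonomy in $G$, which is ultimately a density argument supported by the local product structure and the full $\nu$-measure of $N^*$.
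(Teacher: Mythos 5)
Your argument follows essentially the same route as the paper's: perturb a stable-unstable loop with prescribed total holonomy to nearby bi-recurrent points of maximal dimension, chain Proposition~\ref{conjinvtchoice}, Lemma~\ref{unsttrans} and Lemma~\ref{stabtrans} around the loop, and pass to the limit. The repackaging via the closed stabilizer $K = \{g : \Ad_g(\mathfrak H) = \mathfrak H\}$ is a clean way to organize the final limiting step -- it replaces the paper's ``$\Ad_{g'}$ continuous in $g'$, so $\mathfrak H = \Ad_g(\mathfrak H)$'' with the equivalent observation that a closed subgroup containing a dense set equals $G$ -- but it does not change the substance.

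There is, however, a slip in step (i) that should be corrected. You claim the Lebesgue-number condition lets $\phi_x$ serve as the trivialization near every $x_i$; this is not supported by the hypotheses. Lemmas~\ref{unsttrans} and~\ref{stabtrans} need $B_{2\epsilon}(x_i) \subset V_{x_i}$, but the theorem only assumes $B_\epsilon(x) \subset V_x$, and for $x_i \in B_\epsilon(x)$ the ball $B_{2\epsilon}(x_i)$ can reach out to $B_{3\epsilon}(x)$, which need not lie in $V_x$. The Lebesgue-number hypothesis is about the auxiliary cover $\{V_i\}$, not about $V_x$: its role is precisely to guarantee that at each $x_i$ there is some $V_{j(i)}$ from the cover with $B_{2\epsilon}(x_i) \subset V_{j(i)}$. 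The argument must therefore change trivialization between consecutive legs, and those changes contribute the $\id_{\phi_{x_i}}^{\phi_{x_{i+1}}}$ factors, which by Proposition~\ref{conjinvtchoice} produce $\Ad$-conjugations that fold into the total holonomy exactly as in the paper's displayed formula. Your proof silently drops these factors. The remaining informality in step (ii) -- rerouting the loop through $N^*$ while staying inside $B_\epsilon(x)$ and controlling the drift in $G$ -- is shared with the paper and not a specific weakness of your write-up.
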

\begin{proof}
	Suppose that $x$ is bi-reccurent. Fix an isometry $g \in G$ and consider a stable-unstable sequence $x_0, x_1, \ldots, x_k, x_{k+1} = x_0$ in $N$ with $x_0 = x$ and whose total holonomy is $g$, where $x_{i+1}$ is either on the strong stable or strong unstable leaf through $x_i$ for $g_t$. For the total holonomy to be $g$, we want
	\[
		\id_{\phi_{x_k}}^{\phi_{x_0}}(x_0) \circ \Theta^\pm_{\phi_{x_k},\phi_{x_k}}\left( x_k,x_0 \right) \circ \ldots \id_{\phi_{x_1}}^{\phi_{x_2}}(x_2) \circ \Theta^\pm_{\phi_{x_1},\phi_{x_1}} \left( x_1,x_2 \right) \circ \id_{\phi_{x_0}}^{\phi_{x_1}}(x_1) \circ \Theta^\pm_{\phi_{x_0},\phi_{x_0}}\left( x_0,x_1 \right) = g
	\]
	where we can freely assume that each consecutive pair $x_i, x_{i+1}$ has a common trivialization $\phi_{x_i}$ for which $B_\epsilon(x_i),B_\epsilon(x_{i+1}) \subset V_{x_i}$ -- this is true up to refining the sequence. Suppose, moreover, that we have chosen $\phi_{x_0} = \phi_x$.

	We would like to now invoke Proposition \ref{conjinvtchoice}, Lemma \ref{unsttrans} and Lemma \ref{stabtrans} to show that $\mathfrak h^{\phi_x}_\epsilon(x)$ is $\Ad_g$-invariant for the $g$ corresponding to the total holonomy along this sequence. The $x_i$ we have chosen, however, may fail to be forwards- or backwards-recurrent as necessary. However, note that $\id_{\phi_{x_i}}^{\phi_{x_{i+1}}}(\cdot)$, $\Theta^+_{\phi_{x_i},\phi_{x_i}}(\cdot,\cdot)$ and $\Theta^-_{\phi_{x_i},\phi_{x_i}}(\cdot,\cdot)$ are all locally continuous in all of their arguments. Since bi-recurrent points are dense in $N$, given any $\delta > 0$, we can find a sequence of bi-recurrent points $x'_0, x'_1, \ldots, x'_k, x'_{k+1} = x'_0$ near $x_0, x_1, \ldots, x_k, x_{k+1} = x_0$ with $x'_0 = x$ and
	\[
		\id_{\phi_{x_k}}^{\phi_{x_0}}(x'_0) \circ \Theta^\pm_{\phi_{x_k},\phi_{x_k}}\left( x'_k,x'_0 \right) \circ \ldots \circ \id_{\phi_{x_1}}^{\phi_{x_2}}(x'_2) \circ \Theta^\pm_{\phi_{x_1},\phi_{x_1}} \left( x'_1,x'_2 \right) \circ \id_{\phi_{x_0}}^{\phi_{x_1}}(x'_1) \circ \Theta^\pm_{\phi_{x_0},\phi_{x_0}}\left( x'_0,x'_1 \right) = g'
	\]
	so that we have $d_G(g',g) < \delta$. Since each $x'_i$ is bi-recurrent, successive applications of Proposition \ref{conjinvtchoice}, Lemma \ref{unsttrans} and Lemma \ref{stabtrans} show that we have
	\[
		\mathfrak h_\epsilon^{\phi_x}(x) = \Ad_{g'}\left( \mathfrak h_\epsilon^{\phi_x}(x) \right)
	\]
	for $g'$ arbitrarily close to $G$. Since $\Ad_{g'}$ is continuous in $g'$, we then obtain
	\[
		\mathfrak h_\epsilon^{\phi_x}(x) = \Ad_{g}\left( \mathfrak h_\epsilon^{\phi_x}(x) \right)
	\]
	as desired.
\end{proof}

It is worth remarking that, under our standing assumption that trivializations must have constant projection to $F$ along strong stable leaves of $f_t$, the stable holonomies $\Theta^-_{\phi_{x_i},\phi_{x_i}}(x_i,x_{i+1})$ that appeared in the preceding proof must have all be trivial.

We want to show that $\mathfrak h_\epsilon^*(\cdot)$ is typically equal to the full Lie algebra $\mathfrak g$, though it seems unlikely that this should be true if we merely assume that $f_t$ is \textit{globally} $G$-accessible. With Theorem \ref{bptheorem}, however, we can show that having $\mathfrak h_\epsilon^\phi(x) = \mathfrak g$ at any point is (typically) equivalent to $f_t$ being \textit{locally} $G$-accessible at that point.

\begin{theorem}
	Fix $\epsilon > 0$, $x \in N$ and a trivialization $\phi_x$ defined over $V_x \subset N$ for which we have $B_{3\epsilon}(x) \subset V_x$. Moreover, suppose that $\mathfrak h_\epsilon^{\phi_x}(\cdot)$ is continuous and has maximal dimension on $B_{3\epsilon}(x)$, and that the forwards orbit of $x$ under $g_t$ is dense in $B_{2\epsilon}(x)$. If $f_t$ is locally $G$-accessible at $x$, then $\mathfrak h_\epsilon^{\phi_x}(x) = \mathfrak g$.
	\label{inftransfull}
\end{theorem}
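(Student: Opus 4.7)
The strategy is to combine the conjugation-invariance argument of Theorem \ref{bptheorem} with the crucial confinement of accessibility sequences to $B_\epsilon(x)$ provided by Definition \ref{accessible}, ultimately forcing $\mathfrak h_\epsilon^{\phi_x}(x)$ to exhaust all of $\mathfrak g$ via the structural hypothesis that $G$ admits no proper transitive normal subgroups.

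First, I would run the proof of Theorem \ref{bptheorem} verbatim to conclude that $\mathfrak h_\epsilon^{\phi_x}(x)$ is $\Ad_G$-invariant. Local $G$-accessibility trivially implies global $G$-accessibility, so every $h \in G$ arises as the total holonomy of some stable-unstable loop based at $x$. The hypothesis of Theorem \ref{bptheorem} is bi-recurrence of $x$, which we do not have directly, but our continuity assumption on $\mathfrak h_\epsilon^{\phi_x}(\cdot)$ over $B_{3\epsilon}(x)$ and the density of bi-recurrent points let us approximate $x$ by nearby bi-recurrent $x' \in B_{3\epsilon}(x)$, apply the theorem there, and pass to the limit. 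Hence $\mathfrak h_\epsilon^{\phi_x}(x)$ is an ideal of $\mathfrak g$, and we let $H \trianglelefteq G$ be the closed, connected normal Lie subgroup with Lie algebra $\mathfrak h_\epsilon^{\phi_x}(x)$.

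Next, I would exploit locality to upgrade this to $H = G$. By the standing choice of trivializations (for which stable holonomies are trivial) and by the transport relations of Lemmas \ref{unsttrans} and \ref{stabtrans}, the infinitesimal data of a stable-unstable loop in $B_\epsilon(x)$ is expressible in terms of the precise quantities $X_{w'}^\phi(y) - X_w^\phi(x)$ whose span is $\mathfrak h_\epsilon^{\phi_x}(x)$. Varying the vertices of a loop in $B_\epsilon(x)$ therefore traces curves in $G$ with tangent vectors in $\mathfrak h_\epsilon^{\phi_x}(x)$. Since local $G$-accessibility provides loops within $B_\epsilon(x)$ realizing every $h \in G$, the image $H = \exp(\mathfrak h_\epsilon^{\phi_x}(x))$ of the associated subgroup must act transitively on $F$: for any $f, f' \in F$, an $h \in G$ with $h \cdot f = f'$ is realized by loops whose infinitesimal variations cover a neighborhood of $h$ in $G$, and so $H \cdot f$ contains a neighborhood of $f' \in F$, making each $H$-orbit both open and closed in $F$, hence all of $F$ by connectedness. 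By the structural hypothesis on $G$, transitivity of $H$ forces $H = G$, whence $\mathfrak h_\epsilon^{\phi_x}(x) = \mathfrak g$.

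The main obstacle will be the second step, specifically the rigorous extraction of a Lie-algebra-level surjectivity from the finite combinatorial surjectivity furnished by local $G$-accessibility. The transport lemmas provide a clean infinitesimal dictionary, but translating a global statement about the image of the highly nonlinear loop-holonomy map into a precise statement about the span of the derivatives requires care --- in particular, one must verify that the perturbation of each vertex of a loop remains within $B_\epsilon(x)$ and stays on the correct strong stable or strong unstable leaf relative to its neighbours, so that the perturbed configurations remain genuine stable-unstable loops. An implicit-function or Sard-type argument at each vertex, together with bookkeeping of the conjugations by partial loop holonomies, should suffice to make the infinitesimal-to-finite passage precise.
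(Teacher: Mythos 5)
You correctly identify the two-step structure---first apply Theorem \ref{bptheorem} so that $\mathfrak h := \mathfrak h_\epsilon^{\phi_x}(x)$ is an ideal, then use locality and $G$-accessibility to force $\mathfrak h = \mathfrak g$---but your second step has a genuine gap. The paper's argument chooses $\phi_x$ so that the stable holonomies, and the unstable holonomies along the leaf through $x$, are trivial; then by Lemma \ref{unsttrans} the infinitesimal unstable holonomies at every point of $B_\epsilon(x)$ lie in $\mathfrak h$, and \emph{integrating} this shows every local unstable holonomy lands in $H := \exp(\mathfrak h)$. Since stable holonomies are trivial, every local stable-unstable loop based at $x$ has total holonomy in $H$, and since $H$ is normal this confinement persists under any allowed change of trivialization. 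If $H \neq G$, pick $g \in G \setminus H$: local $G$-accessibility demands a local loop with total holonomy $g$, a contradiction. The hypothesis that $G$ has no proper transitive normal subgroup plays no role in this step.

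Your proposal instead tries to derive that $H$ acts transitively on $F$ and then invokes that structural hypothesis. The load-bearing intermediate claim---that an $h \in G$ realized by a local loop is surrounded by holonomies of nearby loops ``covering a neighborhood of $h$ in $G$''---points in the wrong direction. The transport relations (Lemmas \ref{unsttrans} and \ref{stabtrans}) \emph{confine} the derivatives of the loop-holonomy map to $\mathfrak h$; they provide no surjectivity onto the tangent space of $G$ at $h$, and under the contradiction hypothesis that $\mathfrak h$ is proper the image is strictly lower-dimensional. The ``implicit-function or Sard-type'' step you flag as the main obstacle is therefore exactly the claim that fails. Your route is also logically redundant: once all local holonomies are confined to $H$ and local $G$-accessibility realizes every $h \in G$ as such a holonomy, you already have $H = G$, with no need for the detour through $F$-transitivity. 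The missing idea is the paper's integration step---derivatives in $\mathfrak h$ force holonomy values in $\exp(\mathfrak h)$---which replaces the surjectivity argument you were reaching for.
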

\begin{proof}
	Without loss of generality, suppose that we have chosen $\phi_x$ so that $\Theta^+_{\phi_x,\phi_x}(x,u)$ is trivial for all $u \in \left(W^{su}_{g_t}(x) \cap B_{2\epsilon}(x) \right)^\circ$, and so that $\Theta^-_{\phi_x,\phi_x}(s_1,s_2)$ is trivial whenever $s_1$ and $s_2$ lie on the same (local) leaf of the strong stable foliation of $g_t$ in $B_{2\epsilon}(x)$.

	We will write $\mathfrak h \coloneqq \mathfrak h_\epsilon^{\phi_x}(x)$ and suppose for the sake of contradiction that $\mathfrak h \subsetneq \mathfrak g$ is a proper subalgebra. By Proposition \ref{unsttrans} and our choice of $\phi_x$, we must have $\mathfrak h_\epsilon^{\phi_x}(u) = \mathfrak h$ for all $u \in B_\epsilon(x)$ lying on the local leaf through $x$ of the strong unstable foliation of $g_t$. Hence, for any $u_1 \in B_\epsilon(x)$, each vector $X_{w'}^{\phi_x}(u_1)$ used in the definition of $\mathfrak h_\epsilon^{\phi_x}(u)$ must lie in the Lie algebra $\mathfrak h$. Integrating this, we see that the unstable holonomies $\Theta^+_{\phi_x,\phi_x}(u_1,u_2)$ are constrained to $\exp(\mathfrak h)$ for all $u_1,u_2 \in B_\epsilon(x)$ that lie on the same local leaf of the strong unstable foliation of $g_t$.

	Now, consider $H \coloneqq \exp(\mathfrak h)$ and consider an element $g \in G \setminus H$ that lies in the complement. By our construction of $\phi_x$, all unstable holonomies are constrained to $H$ and all stable holonomies are trivial -- hence, no local sequence of stable and unstable holonomies along a sequence of points $x, x_1, x_2, \ldots x_k, x$ lying in $B_\epsilon(x)$ can result in a total holonomy of $g$. Moreover, since $\mathfrak h$ is an ideal by Theorem \ref{bptheorem}, $H$ is a normal subgroup of $G$; hence, we cannot obtain a total holonomy of $g$ for \textit{any} choice of trivialization. Since $f_t$ is locally $G$-accessible at $x$, this is a contradiction.
\end{proof}

Theorem \ref{inftransfull} gives us the infinitesimal analogue of local accessibility that we sought, and all that we need to do now is verify that this translates properly into the symbolic model we are working with. In principle, the difficulty is that unstable leaves for the discrete dynamical system $(R, \mathcal P)$ are typically not unstable leaves for $g_t$ -- fortunately, our choice of trivializations will circumvent almost all of these problems.

Recall that $(R, \mathcal P)$ is a Markov partition associated to $g_t$, which descends to a $C^1$ expanding model $(U, \sigma)$ by projecting along leaves of the strong stable foliation of $g_t$. We want to define unstable holonomies entirely within the symbolic model; a natural candidate for a definition comes from Proposition \ref{holonomyprop}. For everything that follows, we will assume that each $R_i \subset N$ has been assigned a fixed trivialization $\phi_i$ defined on a neighborhood $B_\epsilon(R_i)$, with the property that $\phi_i$ has constant projection to $F$ along each leaf of the strong stable foliation of $f_t$ and each flowline of $f_t$. By choosing and fixing trivializations at each point in $R$, we will no longer need to specify which trivialization we are using, at least when dealing with the symbolic model.

\begin{definition}
	Fix $x \in R$ and $y \in W^{su}_{\mathcal P}(x)$. The \textit{symbolic unstable holonomy} $\Theta^+_{\symb}(x,y)$ is defined to be the limit
	\[
		\Theta^+_{\symb}(x,y) \coloneqq \lim_{n \to \infty} \Hol^{(n)}\left(\mathcal P^{-n}(y)\right) \left( \Hol^{(n)}\left( \mathcal P^{-n}(x) \right) \right)^{-1}
	\]
	where we write
	\[
		\Hol^{(n)}(u) \coloneqq \Hol\left( \mathcal P^{n-1}(u) \right) \circ \ldots \circ \Hol\left( u \right)
	\]
	for the $n^{\rm th}$ holonomy under the Poincar\'e return map.
	\label{symbholdef}
\end{definition}

It is straightforward to verify that this limit exists. Moreover, our choice of trivializations ensures that the symbolic unstable holonomies agree with the appropriate (non-symbolic) unstable holonomies.

\begin{proposition}
	Fix $x \in R$ and $y \in W^{su}_{\mathcal P}(x)$. There is a (possibly negative) $t$ so that $g_t(y) \in W^{su}_{g_t}(x)$ for which we have
	\[
		\Theta^+_{\symb}(x,y) = \Theta^+_{\phi_x,\phi_y}(x,g_t(y))
	\]
	assuming $x$ and $y$ are sufficiently close, where $\phi_x,\phi_y$ are trivializations corresponding to the respective parts of the Markov partition.
	\label{samehols}
\end{proposition}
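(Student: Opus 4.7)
The plan is to apply Proposition \ref{holonomyprop} to $x$ and $y' \coloneqq g_t(y)$ and show that the limit it produces agrees with the one defining $\Theta^+_{\symb}(x,y)$, with any residual discrepancy annihilated by the standing assumption that the trivializations are constant along flowlines of $f_t$.

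First I would produce $t$. Since $\mathcal P$ is a Poincar\'e return map on the transverse section $R$, its strong unstable direction at $x$ coincides with the flow-transverse piece of $W^{wu}_{g_t}(x)$, so $W^{su}_{\mathcal P}(x) \subset W^{wu}_{g_t}(x)$. Hence $y$ lies on the weak unstable leaf of $g_t$ through $x$, and there is a small $t \in \bb R$ with $y' \coloneqq g_t(y) \in W^{su}_{g_t}(x)$. Next I would set $x_n \coloneqq \mathcal P^{-n}(x) = g_{s_n}(x)$ and $y_n \coloneqq \mathcal P^{-n}(y) = g_{s'_n}(y)$, with $s_n, s'_n \to -\infty$. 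Because $y \in W^{su}_{\mathcal P}(x)$, the points $x_n$ and $y_n$ lie in a common Markov rectangle $R_{j_n}$ and therefore share the trivialization $\phi_{j_n}$. The intermediate trivializations in the telescoping definition of $\Hol^{(n)}$ cancel, so
\[
    \Hol^{(n)}(x_n) = \Hol_{\phi_{j_n}}^{\phi_x}(x_n, x), \qquad \Hol^{(n)}(y_n) = \Hol_{\phi_{j_n}}^{\phi_y}(y_n, y),
\]
recasting the symbolic holonomies as genuine temporal holonomies of the flow.

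I would then apply Proposition \ref{holonomyprop} to $x$ and $y'$ with $T = (s_n)$ and with $(\phi_{j_n})$ as the eventual common trivialization sequence for both $I_x$ and $I_{y'}$ -- this is legal because backward strong unstable contraction forces $g_{s_n}(y')$ into $B_\epsilon(R_{j_n})$ for all large $n$, yielding
\[
    \Theta^+_{\phi_x, \phi_y}(x, y') = \lim_{n \to \infty} \Hol_{\phi_{j_n}}^{\phi_y}(g_{s_n}(y'), y') \circ \left( \Hol_{\phi_{j_n}}^{\phi_x}(x_n, x) \right)^{-1}.
\]
The key geometric observation needed to connect this to $\Theta^+_{\symb}(x,y)$ is that the flow-time offset $s_n + t - s'_n$ tends to $0$: the points $g_{s_n+t}(y) = g_{s_n}(y')$ and $y_n = g_{s'_n}(y)$ both lie on the flow orbit of $y$ and both cluster on $x_n$, the former along $W^{su}_{g_t}(x_n)$ and the latter along the section $R_{j_n}$, so transversality of the flow forces their flow parameters to coincide in the limit. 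Splitting the trip $g_{s_n}(y') \to y'$ through $y_n$ and $y$ via the cocycle property of temporal holonomy, and absorbing the vanishing flow-time correction into a continuous factor, the display above becomes
\[
    \Theta^+_{\phi_x, \phi_y}(x, y') = \Hol_{\phi_y}^{\phi_y}(y, y') \circ \Theta^+_{\symb}(x,y).
\]

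The final crucial step is to observe that $\Hol_{\phi_y}^{\phi_y}(y, y') = \id$. This is precisely where the standing assumption fixed at the start of this subsection -- that each $\phi_i$ has constant projection to $F$ along flowlines of $f_t$ -- pays off: since $y' = g_t(y)$ lies on the same $f_t$-orbit as $y$, the $F$-coordinate under $\phi_y$ is unchanged, and the temporal holonomy measured in a single such trivialization between two points of a common flow orbit is trivial. The hardest part of writing this out carefully is really bookkeeping: tracking which trivialization is attached to which point and which flow-time intervals are being compared, so that the cocycle manipulations and the vanishing of the residual flow-offset correction can be rigorously justified uniformly in $n$.
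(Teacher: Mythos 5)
Your proof is correct and takes essentially the same route as the paper's: apply Proposition \ref{holonomyprop} along the return times of $x$ with the Markov trivializations, identify the resulting limit with $\Theta^+_{\symb}(x,y)$ up to a residual temporal holonomy along the orbit of $y$, and then kill that residual factor using the standing normalization that each $\phi_i$ has constant $F$-projection along $f_t$-flowlines. The only inessential extra step you take is arguing that the flow-time offset $s_n + t - s'_n \to 0$; since both $g_{s_n}(y')$ and $y_n$ eventually lie in the chart of $\phi_{j_n}$ and the trivialization is constant along flowlines, the correction $\Hol_{\phi_{j_n}}^{\phi_{j_n}}(g_{s_n}(y'),y_n)$ is the identity regardless of whether the offset vanishes, so this decay is not actually needed.
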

\begin{proof}
	The existence of such a $t$ satisfying $|t| < \tau(y), \tau\left( \mathcal P^{-1}(y) \right)$ follows immediately from the construction of the Markov partition $(R, \mathcal P)$. We then clearly have
	\[
		\Theta^+_{\symb}(x,y) = \Hol_{\phi_y}^{\phi_y}(g_t(y),y) \circ \Theta^+_{\phi_x,\phi_y}(x, g_t(y))
	\]
	by Proposition \ref{holonomyprop} and Definition \ref{symbholdef}. By our choice of trivialization and the fact that $t$ does not exceed the return time of $y$, $\Hol_{\phi_y}^{\phi_y}(g_t(y),y)$ is the identity in $G$, as desired.
\end{proof}

And now, we can analogously define the \textit{symbolic} infinitesimal transitivity group:

\begin{definition}
	Fix $x \in U_i$ and a vector $w \in T^1_xU_i$. We define the \textit{symbolic infinitesimal holonomy} at $x$ in the direction of $w$ to be the element
	\[
		X_w^{\symb}(x) \coloneqq \left( \restr{\frac d {du}}{u = x} \left( \Theta^+_{\symb}\left( x,u \right) \right) \right)(w)
	\]
	of the Lie algebra $\mathfrak g$ of $G$. The \textit{symbolic infinitesimal transitivity group} at $x$ is defined to be th linear span
	\[
		\mathfrak h_{\symb}(x) \coloneqq \Span_{s,s',w} \left( X_{w'}([x,s']) - X_w([x,s] \right)
	\]
	where we take the span over $s,s' \in S_i$, $w \in T_x^1U_i$ and let $w'$ be the projection of $w$ to $[x,s']$ via center-stable leaves followed by the flow.
	\label{symbtransgrp}
\end{definition}

With very little work, we can now prove

\begin{theorem}
	Fix a bi-recurrent $x \in U_i$ at which $\dim \mathfrak h_\epsilon(x)$ is maximal, and suppose that $f_t$ is $G$-accessible. Then $\mathfrak h_\symb(x) = \mathfrak g$.
\end{theorem}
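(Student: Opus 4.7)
The plan is to deduce the equality $\mathfrak h_\symb(x) = \mathfrak g$ from the geometric analogue Theorem~\ref{inftransfull} by using Proposition~\ref{samehols} to identify the symbolic and geometric infinitesimal transitivity groups at $x$. Interpreting the hypothesis ``$G$-accessible'' as ``locally $G$-accessible'' in line with the standing assumption of Theorem~\ref{maintheorem}, the desired conclusion follows once we establish an inclusion $\mathfrak h_{\epsilon'}^{\phi_i}(x) \subseteq \mathfrak h_\symb(x)$ for some sufficiently small $\epsilon' > 0$ and apply Theorem~\ref{inftransfull} to the right-hand side.

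The first step is to match the symbolic and geometric infinitesimal holonomies at the base point. Since $x \in U_i$, we have $W^{su}_{g_t}(x) = U_i$ as germs at $x$, so $t \equiv 0$ in the application of Proposition~\ref{samehols} and $\Theta^+_\symb(x, u) = \Theta^+_{\phi_i, \phi_i}(x, u)$ for $u \in U_i$ near $x$, yielding $X_w^\symb(x) = X_w^{\phi_i}(x)$. At a nearby bracket point $[x, s']$ the identity is slightly more delicate: here the map $u \mapsto [u, s']$ from $U_i$ to $[U_i, s']$ has a nontrivial flow component, but the constraint $g_{t(u)}([u, s']) \in W^{su}_{g_t}([x, s'])$ built into Proposition~\ref{samehols} forces this flow component to be cancelled by the derivative of $t(u)$. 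Upon differentiating $\Theta^+_\symb([x, s'], [u, s'])$ one is left precisely with the strong-unstable derivative of $\Theta^+_{\phi_i, \phi_i}([x, s'], \cdot)$, and the ``center-stable followed by flow'' push-forward of $w$ that appears in Definition~\ref{symbtransgrp} coincides with the center-stable push-forward used in Definition~\ref{inftransgrp} (these are the same thing, since the center-stable foliation already contains the flow direction). Hence $X_{w'}^\symb([x, s']) = X_{w'}^{\phi_i}([x, s'])$.

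Taking $s = z_i$ so that $[x, z_i] = x$, the generators $X_{w'}^\symb([x, s']) - X_w^\symb(x)$ of $\mathfrak h_\symb(x)$ become geometric generators $X_{w'}^{\phi_i}([x, s']) - X_w^{\phi_i}(x)$ with $[x, s'] \in W^{ss}_\epsilon(x)$. As $s'$ ranges over $S_i$, the points $[x, s']$ sweep out the connected component of $W^{ss}(x) \cap R_i$ and in particular contain a neighborhood $W^{ss}_{\epsilon'}(x)$ for some $\epsilon' > 0$; this yields $\mathfrak h_{\epsilon'}^{\phi_i}(x) \subseteq \mathfrak h_\symb(x)$. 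The hypotheses of Theorem~\ref{inftransfull} at $x$ for this $\epsilon'$ are then straightforward to verify: maximality of $\dim \mathfrak h_{\epsilon'}^{\phi_i}(\cdot)$ on $B_{3\epsilon'}(x)$ follows from maximality at $x$, lower semi-continuity and Proposition~\ref{conjinvtflow} combined with the topological transitivity of $g_t$ (as in Corollary~\ref{maxdim}); forward-orbit density in $B_{2\epsilon'}(x)$ is immediate from bi-recurrence and transitivity; and local $G$-accessibility at $x$ is the hypothesis. Theorem~\ref{inftransfull} then yields $\mathfrak h_{\epsilon'}^{\phi_i}(x) = \mathfrak g$, and the inclusion above finishes the proof.

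The main obstacle is the identification of the symbolic push-forward $w'$ at $[x, s']$ with its geometric counterpart, since the parametrisation $u \mapsto [u, s']$ of the symbolic unstable leaf picks up a flow component that is invisible in Proposition~\ref{samehols}. This is precisely where the flow- and strong-stable-invariance of the trivializations $\phi_i$ becomes essential, since it ensures the extraneous flow correction drops out upon differentiating the geometric unstable holonomy and makes the two notions of infinitesimal holonomy coincide.
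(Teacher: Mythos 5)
Your proof takes essentially the same approach as the paper's: use Proposition~\ref{samehols} to identify the symbolic unstable holonomies (and hence the symbolic infinitesimal transitivity group) with their geometric counterparts, then appeal to the earlier accessibility-to-full-Lie-algebra result. The paper's own proof is a three-sentence version of exactly this, though it cites Theorem~\ref{bptheorem} --- which only concludes $\Ad_G$-invariance, not $\mathfrak h^{\phi_x}_\epsilon(x) = \mathfrak g$ --- and you are right to instead invoke Theorem~\ref{inftransfull}; this is almost certainly a typo in the paper. Your more careful tracking of the flow component in the reparametrisation $u \mapsto [u, s']$ and the observation that the flow-invariance of the trivializations makes it drop out is a genuine clarification the paper skips.

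One claim in your verification of the hypotheses of Theorem~\ref{inftransfull} does not hold up as stated: ``forward-orbit density in $B_{2\epsilon'}(x)$ is immediate from bi-recurrence and transitivity'' is false. Bi-recurrence of $x$ only gives that the forward orbit returns arbitrarily close to $x$ itself, and topological transitivity of $g_t$ produces \emph{some} point with dense orbit, not that $x$ has one --- periodic points, for instance, are bi-recurrent but have nowhere-dense forward orbits. To run the argument honestly one needs $x$ to actually have a dense forward orbit, which holds for $\nu$-generic $x$ (by ergodicity and full support of $\nu$), not for all bi-recurrent $x$. This hypothesis mismatch is already latent in the paper's statement of the theorem (which assumes only bi-recurrence while Theorem~\ref{inftransfull} needs density), so it is a gap shared with the source rather than one you introduced, but the justification you offer for closing it does not work and should be replaced by an appeal to genericity.
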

\begin{proof}
	By Proposition \ref{samehols}, the unstable holonomies used in Definition \ref{inftransgrp} and Definition \ref{symbtransgrp} are the same. Hence, the regular and symbolic transitivity groups agree, and so by Theorem \ref{bptheorem}, we have $\mathfrak h_\symb(x) = \mathfrak g$.
\end{proof}

This is almost the result we want, but we need to use some Lie theory to extract the explicit estimates that we will use in the next section. We will want to phrase this in terms of $\sigma$, which means minor notational changes in the preceding theorems. Recall that $\sigma \colon U \to U$ is not actually invertible; to make sense of the inverse, we must choose branches of $\sigma^{-n}$ locally.

\begin{definition}
	A \textit{consistent past} for $u \in U_i$ is a sequence of maps $\{v^{(n)} \colon U_i \to U_{j(n)} \mid n \geq 0\}$ where $v^{(0)} = \restr{\id}{U_i}$ and $\sigma \circ v^{(n)} = v^{(n-1)}$.
\end{definition}

\begin{remark}
	A consistent past for $u \in U_i$ corresponds exactly to a choice of stable element $s \in S_i$ -- we can recover the maps $\left\{ v^{(n)} \right\}$ by projecting the Poincar\'e return map $\mathcal P^{(-n)}$ along leaves of the strong stable foliation.
\end{remark}

Finally, we can establish the main estimate.

\begin{theorem}
	\label{nli}
	Let $\rho$ be an isotypic component of the representation of $G$ on $L^2(G)$. There is an open subset $U_{lni} \subset U$ and constants $\epsilon, \delta, n_0 > 0$ so that, for any $x \in U_{lni}$ and any $\varphi \in C^1(U, V^\rho)$, there are consistent pasts $v_1 = \left\{v_1^{(n)}\right\}$ and $v_2 = \left\{v_2^{(n)}\right\}$ defined near $x$ and a $C^1$ vector field $w \colon U_{lni} \to T^1U_{lni}$ satisfying
	\[
		\left\| \left(d\rho\left( X_{w(u),v_1}^\symb(u) - X_{w(u),v_2}^\symb(u) \right)\right)(\varphi(u)) \right\|_{L^2(G)} \geq \epsilon \|\rho\|
	\]
	for all $u \in B_\delta(x)$.
\end{theorem}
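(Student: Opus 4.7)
The plan is to combine the output of the preceding results --- the identification $\mathfrak h_\symb(x) = \mathfrak g$ on a large set --- with a representation-theoretic lower bound derived from the Casimir, to produce a uniform spanning estimate across all irreducible $\rho$.

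First, I would identify $U_{lni}$. By Corollary \ref{maxdim}, $\dim \mathfrak h_\epsilon^*$ attains its maximum on an open dense set of full measure; bi-recurrent points are dense, and at such points Theorem \ref{bptheorem} together with Theorem \ref{inftransfull} forces this maximum to equal $\dim \mathfrak g$ under the local $G$-accessibility hypothesis. Proposition \ref{samehols} identifies the symbolic unstable holonomies with the geometric ones, so $\mathfrak h_\symb$ also equals $\mathfrak g$ on a set of full measure containing a non-empty open subset, which I take as $U_{lni}$ (shrunk so that $\mathfrak h_\symb$ is constantly equal to $\mathfrak g$ on it, using lower semi-continuity).

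Next, I would select finitely many ``reference'' difference data. Fix $x \in U_{lni}$. Since $\mathfrak h_\symb(x) = \mathfrak g$, I can choose $N = \dim \mathfrak g$ triples $(v_1^{(j)}, v_2^{(j)}, w^{(j)})$ for which the symbolic difference vectors
\[
Y_j(u) \coloneqq X^\symb_{w^{(j)}(u),\, v_1^{(j)}}(u) - X^\symb_{w^{(j)}(u),\, v_2^{(j)}}(u)
\]
form a basis of $\mathfrak g$ at $u = x$. Because the symbolic unstable holonomy depends in a $C^1$ manner on its arguments (invoking the $C^1$ regularity of the strong stable and unstable foliations of $g_t$ and the smoothness of the flow), the maps $u \mapsto Y_j(u)$ are $C^1$ on a neighborhood of $x$; shrinking to some $B_\delta(x) \subset U_{lni}$ ensures that $\{Y_j(u)\}$ remains a uniform basis, i.e.\ every unit vector $X \in \mathfrak g$ admits an expansion $X = \sum a_j(u, X) Y_j(u)$ with $|a_j| \leq C_0$ uniformly in $u \in B_\delta(x)$.

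The key representation-theoretic input is a Casimir-type lower bound: for any orthonormal basis $\{X_i\}$ of $\mathfrak g$ and any nonzero $v \in V^\rho$,
\[
\sum_i \|d\rho(X_i) v\|_{L^2(G)}^2 = c_\rho \|v\|_{L^2(G)}^2,
\]
where $c_\rho$ is the Casimir eigenvalue on the irrep. For nontrivial $\rho$ one can show $c_\rho \asymp \|\rho\|^2$ (comparable up to absolute constants determined by the root system and $\Ad$-invariant norm on $\mathfrak g$, in analogy with Proposition \ref{normexists}). Hence there exists $i$ with $\|d\rho(X_i) v\| \geq c \|\rho\| \|v\|$, for a constant $c > 0$ independent of $\rho$.

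To finish: given $\varphi \in C^1(U, V^\rho)$ and $u \in B_\delta(x)$, apply the Casimir bound at $v = \varphi(u)$ to locate an index $i$ with $\|d\rho(X_i)\varphi(u)\| \geq c\|\rho\| \|\varphi(u)\|$. Expanding $X_i = \sum_j a_j Y_j(u)$ with $|a_j| \leq C_0$ and applying the triangle inequality, there must exist some $j = j(u, \varphi)$ so that
\[
\|d\rho(Y_j(u)) \varphi(u)\|_{L^2(G)} \geq \frac{c}{N C_0} \|\rho\| \|\varphi(u)\|,
\]
and setting $v_1 = v_1^{(j)}$, $v_2 = v_2^{(j)}$, $w = w^{(j)}$ yields the desired bound (with $\epsilon = c/(N C_0)$, absorbing the $\|\varphi(u)\|$ into the normalization implicit in the statement). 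The main obstacle is the Casimir comparison $c_\rho \asymp \|\rho\|^2$ uniformly over all nontrivial irreps, which must be made to match the specific choice of norm in Definition \ref{derivativenorm}; a secondary subtlety is choosing the reference triples $(v_1^{(j)}, v_2^{(j)}, w^{(j)})$ so that the vector field $w^{(j)}$ is genuinely $C^1$ and the consistent pasts are coherently defined as required.
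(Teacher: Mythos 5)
Your proposal follows essentially the same route as the paper: use the earlier results to show $\mathfrak h_{\symb}$ fills out $\mathfrak g$ on a large set, extract a finite family of symbolic difference vectors forming a basis, and run a Casimir argument to get a lower bound proportional to $\|\rho\|$. However, there are two genuine gaps.

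First, and most importantly, the pair of pasts and the vector field you end up with are not a single choice on $B_\delta(x)$. You apply the Casimir bound at $v = \varphi(u)$ to locate an index $i = i(u)$, and then the pigeonhole over the expansion $X_i = \sum_j a_j(u)Y_j(u)$ gives a $j = j(u,\varphi)$; ``setting $v_1 = v_1^{(j)}$'' therefore means the selected data change as $u$ moves. The statement requires one $(v_1, v_2, w)$ that works simultaneously for all $u \in B_\delta(x)$. The paper handles this by making the choice once, at $u=x$, to get the bound with constant $\epsilon$, and then using continuity of $u \mapsto X^{\symb}_{w(u),v}(u)$ and of $\varphi$ to propagate the inequality (with the degraded constant $\epsilon/2$) to a small ball. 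You need this continuity/localization step to close the argument.

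Second, you never produce the constant $n_0$ that appears in the statement. The symbolic infinitesimal holonomies $X^{\symb}_{w,v}$ are defined as infinite limits over longer and longer pasts, but the downstream application (Lemma \ref{betalemma}) needs to compare holonomies along a \emph{fixed finite} number $n_0$ of preimages under $\sigma$. The paper observes that the derivatives in the defining sequence converge exponentially fast, so the finite-step approximations $X^{\symb}_{w,v^{(n_0)}}$ still span $\mathfrak g$ once $n_0$ is large enough, and it is these truncations that actually feed into the Casimir. Without this truncation your Casimir is built from objects that don't directly appear in the transfer-operator estimates, and $n_0$ remains unaccounted for.
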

\begin{proof}
	By definition, we have
	\[
		X_{w,v_i}^\symb(x) = \restr{\frac d {du}}{u=x} \left( \lim_{n \to \infty}  \Hol^{(n)}\left( v_i^{(n)}(u) \right) \left( \Hol^{(n)}(v_i^{(n)}(x) \right) \right)
	\]
	for all $u \in B_\delta(x)$. The derivatives of the terms in the sequence converge exponentially fast in $n$, so we can interchange the limit and the derivative to get
	\[
		X_{w,v_i}^\symb(x) = \lim_{n\to \infty} \left( \restr{\frac d {du}}{u=x} \left( \Hol^{(n)}(v_i^{(n)}(y)) \left( \Hol^{(n)}\left( v_i^{(n)}(x) \right) \right)^{-1} \right)  \right)
	\]
	for any consistent past $v_i$. Since the $X_{w(u),v_i}^\symb - X_{w(u),v_j}^\symb$ taken over pasts $v_i, v_j$ and vectors $w$ form a basis of $\mathfrak g$, there is a finite $n_0$ so that the approximations
	\[
		X_{w,v_i^{(n_0)}}^{\symb}(x) \coloneqq \restr{\frac d {du}}{u=x} \left( \Hol^{(n_0)}(v_i^{(n_0)}(y)) \left( \Hol^{(n_0)}\left( v_i^{(n_0)}(x) \right) \right)^{-1} \right)
	\]
	can also be used to form a basis $X_{w(u),v_i^{(n_0)}}^{\symb}(x) - X_{w(u),v_j^{(n_0)}}^{\symb}(x)$, taken again over pasts $v_i, v_j$ and vectors $w$. Hence, we have a Casimir
	\[
		\Omega = \sum \left( g_{ij} \right)^{-1} \left( X_{w,v_i^{(n_0)}}^{\symb}(x) - X_{w,v_j^{(n_0)}}^{\symb}(x) \right) \left( X_{w,v_i^{(n_0)}}^{\symb}(x) - X_{w,v_j^{(n_0)}}^{\symb}(x) \right)
	\]
	for some finite collection of pasts $v_i, v_j$ and a given vector $w$. This acts on $V^\rho$ by scalar multiplication by $\|\rho\|^2$, and so we get
	\[
		d\rho\left(\sum (g_{ij})^{-1} \left( X_{w,v_i^{(n_0)}}^{\symb}(x) - X_{w,v_j^{(n_0)}}^{\symb}(x) \right)^2\right)\left( \varphi(x) \right) = \|\rho\|^2 \varphi(x)
	\]
	for this collection of pasts, and the same vector $w$. Now, there is a uniform constant $\epsilon > 0$ -- depending only on the $g_{ij}$ and $\mathfrak g$ -- so that
	\[
		\left\| d\rho\left( X_{w,v_i^{(n_0)}}^{\symb}(x) - X_{w,v_j^{(n_0)}}^{\symb}(x) \right) \left( \varphi(x) \right) \right\|_{L^2(G)} \geq \epsilon \|\rho\| \varphi(x)
	\]
	for some choice of $v_i, v_j$ and $w$. Since $X_{w,v}^\symb(u)$ varies continuously in $w$ and $u$, there is a neighborhood $B_\delta(x)$ of $x$ in $U$ and a $C^1$ vector field $w \colon B_\delta(x) \to T^1B_\delta(x)$ for which
	\[
		\left\| d\rho\left( X_{w(u),v_i^{(n_0)}}^{\symb}(u) - X_{w(u),v_j^{(n_0)}}^{\symb}(u) \right) \left( \varphi(u) \right) \right\|_{L^2(G)} \geq \frac \epsilon 2 \|\rho\| \varphi(u)
	\]
	holds for all $u \in B_\delta(x)$, as desired.
\end{proof}

\label{snli}

\section{\texorpdfstring{Spectral bounds for $\mathcal L^n_{z,\rho}$}{Spectral bounds for the transfer operators}}
\label{transopsec}

In this section, we establish bounds for the twisted transfer operators $\mathcal L^n_{z,\rho}$ acting on $C^1(U, V^\rho)$ with respect to the $L^2(\nu^u)$-norm. The key ingredients are the local non-integrability estimate in Theorem \ref{nli}, the diametric regularity of the measure $\nu^u$ and the $C^1$ regularity of $\alpha_z$ from the definition of the transfer operators.

The strategy adopted in this section is by now classical, dating back to Dolgopyat in \cite{dolgopyatmixing} and \cite{dolgopyatextension}; an account of this almost completely adapted to our setting was also given by Winter in \cite{winter}. The reader already familiar with these arguments should find no surprises in this section, but we feel it necessary to include them given their delicate nature and the minor differences in our contexts.

We begin by recalling the definition of the twisted transfer operator
\[
	\left(\mathcal L^n_{z,\rho}\varphi\right)(u) \coloneqq \sum_{\sigma^n(u')=u} e^{\alpha_z^{(n)}(u')} \rho\left( \Hol^{(n)}(u') \right) \varphi(u')
\]
associated to an irreducible representation $\rho$ of $G$ on $V^\rho \subset L^2(G)$. In principle, the difficulty in obtaining contraction for $\left\|\mathcal L^n_{z,\rho} \varphi \right\|_{L^2(\nu^u)}$ as $n \to \infty$ lies in the possibility that the rotation introduced by the action of $\rho$ may cause resonances between the vectors $\varphi(u') \in V^\rho$, and this may happen on a set of large measure.

The local non-integrability estimate provided by Theorem \ref{nli}, however, suggests that we should typically be able to find $u_1', u_2'$ with $\sigma^n(u_1') = \sigma^n(u_2') \in U_{lni}$ so that
\[
	\rho \left( \Hol^{(n)}(u_1') \right) \varphi(u_1')\qquad \text{and}\qquad \rho \left(  \Hol^{(n)}(u_2') \right) \varphi(u_2')
\]
are `uniformly' non-parallel. The main argument in the section boils down to verifying that this can be accomplished on an adequately large set, with explicit uniformity estimates.

Throughout this section, we will work with a fixed isotypic component $V^\rho$ of the regular representation of $G$ on $L^2(G)$. However, it is worth noting that most of the intermediate constants will fundamentally depend on $\rho$, and keeping track of these dependencies is essential to obtaining a final bound in Theorem \ref{spectralgapmaintheorem} that is independent of $\rho$.

Though we need to deal with $\mathcal L^n_{z,\rho} \varphi$ for any $\varphi \in C^1(U, V^\rho)$, it will be helpful to work instead with slightly more regular real-valued functions $\Phi \in C^1(U, \bb R^+)$ with bounded logarithmic derivative; in other words,we will require
	\begin{align*}	
		\sup_{w \in T_u^1U} \left| \left( d\Phi \right)_u(w) \right| &< C \Phi(u)
	\end{align*}
	for all $u \in U$. We will use $\mathcal K_C$ to denote the class of such functions
\[
	\mathcal K_C \coloneqq \left\{ \Phi \in C^1(U, \bb R^+) \mid \text{$\log \Phi$ is $C$-Lipschitz} \right\}
\]
for any constant $C > 0$.

Since $g_t$ is Anosov, the expansion rates of $g_t$ on $U$ are bounded away from $1$. Since the return times $\tau \colon R \to \bb R$ are bounded away from $0$, the slowest expansion rates of $d\sigma^n$ on $U$ are therefore also bounded away from $1$. For what follows, let $f \kappa^n$ and $b K^n$ with $1 < \kappa < K$ be the slowest and fastest expansion rate of any unit vector in $T^1U$ under $d\sigma^n$.

We are interested in functions $\Phi \in \mathcal K_C$ with bounded logarithmic derivative because they can be used to control less regular functions $\varphi \in C^1(U, \bb R)$. The following lemma makes this precise.

\begin{lemma}
	Fix $C >  0$, $\varphi \in C^1(U, V^\rho)$ and $\Phi \in \mathcal K_C$. There is a $\delta > 0$ so that, if
	\begin{align*}
		\left\| \varphi(u) \right\|_{L^2(G)} &< \Phi(u)\\
		\sup_{w \in T_u^1U} \left\| \left( d\varphi \right)_u(w) \right\|_{L^2(G)} &< C \Phi(u)
	\end{align*}
	for all $u \in U$, then for any $u_0 \in U$,
	\begin{align*}
		{\left\| \varphi\left(v^{(n)}(u)\right) \right\|_{L^2(G)}} &\leq \frac 3 4 {\Phi\left(v^{(n)}(u)\right)}\\
		\intertext{or}
		{\left\| \varphi\left(v^{(n)}(u)\right) \right\|_{L^2(G)}}&\geq \frac 1 4{\Phi\left(v^{(n)}(u)\right)}
	\end{align*}
	for all $u \in B_\delta(u_0)$, each $n > 0$ and each consistent past $v = \left\{ v^{(n)} \right\}$ defined near $u_0$. Moreover, for any $u_0 \in U$,
	\[
		\Phi\left(v^{(n)}(y)\right) \leq 2 \Phi\left(v^{(n)}(x)\right)
	\]
	for all $x, y \in B_\delta(u_0)$. The choice of constant $\delta > 0$ can be made so that we have $\delta C = A$ for some uniform constant $A$, which does not depend on $\rho$, $\Phi$, $\varphi$, $u_0$, $v$ or $n$.
	\label{alternative}
\end{lemma}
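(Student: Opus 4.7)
The core idea is to exploit the uniform contraction of consistent past maps. Since $d\sigma^n$ expands unit vectors by at least $f\kappa^n$, each $v^{(n)}$ contracts distances by at least that factor, so $v^{(n)}(B_\delta(u_0))$ has diameter at most $\delta/(f\kappa^n) \leq \delta/(f\kappa)$ for $n \geq 1$. I will take $\delta$ small enough (in terms of the Markov partition's geometry) that each branch $v^{(n)}$ is well-defined and $C^1$ on $B_\delta(u_0)$; this is a condition depending only on the fixed partition $\mathcal R$.

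For the main estimate, set $\epsilon_n \coloneqq \delta/(f\kappa^n)$ and consider the ratio $r(u) \coloneqq \|\varphi(v^{(n)}(u))\|_{L^2(G)}/\Phi(v^{(n)}(u))$. On the ball $v^{(n)}(B_\delta(u_0))$ of diameter at most $\epsilon_n$, the Lipschitz bound on $\log \Phi$ yields $\Phi(v^{(n)}(y))/\Phi(v^{(n)}(x)) \leq e^{C\epsilon_n}$, while the derivative bound on $\varphi$ gives $\bigl|\,\|\varphi(v^{(n)}(y))\|_{L^2(G)} - \|\varphi(v^{(n)}(x))\|_{L^2(G)}\,\bigr| \leq C(\max\Phi)\epsilon_n$. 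A short computation using $\|\varphi\|_{L^2(G)} \leq \Phi$ then bounds the oscillation of $r$ on $B_\delta(u_0)$ by $C\epsilon_n e^{C\epsilon_n} + (e^{C\epsilon_n} - 1)$, which falls below $1/4$ provided $C\epsilon_n$ stays under some uniform threshold $c_0$.

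The dichotomy is then immediate: if $r(u_0) \leq 1/2$ then $r(u) \leq 3/4$ throughout $B_\delta(u_0)$, and if $r(u_0) > 1/2$ then $r(u) \geq 1/4$. Choosing $A$ so that $A \leq f\kappa \cdot \min(\log 2, c_0)$ and setting $\delta = A/C$ ensures simultaneously the auxiliary bound $\Phi(v^{(n)}(y)) \leq 2\Phi(v^{(n)}(x))$ and the required ratio oscillation for every $n \geq 1$. Since the constraint $C\epsilon_n < c_0$ is strongest at $n = 1$ and the expansion rate $f\kappa$ is intrinsic to $\sigma$, the resulting constant $A = C\delta$ is independent of $\rho$, $\Phi$, $\varphi$, $u_0$, $v$, and $n$. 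There is no serious obstacle here; the main thing to be careful about is the bookkeeping in the oscillation computation and the fact that all constants depend only on $f, \kappa$ and the Markov geometry, not on the input data.
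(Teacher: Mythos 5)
Your proof is correct and follows essentially the same route as the paper's: both use the $C$-Lipschitz bound on $\log\Phi$ together with the contraction of $v^{(n)}$ to control the oscillation of the ratio $\|\varphi\circ v^{(n)}\|/(\Phi\circ v^{(n)})$ on $B_\delta(u_0)$, then read off the dichotomy by comparing to a reference point (you pivot at $u_0$, the paper pivots at a hypothetical $y$ where the ratio drops below $1/4$). The only differences are cosmetic bookkeeping (introducing the explicit ratio function $r$, a factor of $2$ in the diameter of $B_\delta$, and slightly looser intermediate constants), none of which affect the validity or uniformity of the final constant $A = C\delta$.
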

\begin{proof} 
	Since we chose $\Phi \in \mathcal K_C$, $\log \Phi$ is $C$-Lipschitz and we therefore have
	\begin{align*}
		\left| \log \left( \Phi\left(v^{(n)}(y)\right) \right) - \log \left( \Phi\left( v^{(n)}(x) \right) \right) \right| &\leq \frac C {f\kappa^n} d(x,y)\\
		&\leq \frac C f d(x,y)
	\end{align*}
	for any $x,y \in U$. Exponentiating both sides, this means
	\[
		\Phi\left( v^{(n)}(y) \right) \leq e^{ \frac C {f} d(x,y)} \Phi\left( v^{(n)}(x) \right)
	\]
	for any $x,y \in U$. Now, suppose that $d(x,y) \leq 2\delta$ for some $\delta > 0$, and fix a unit speed path $\gamma \colon [0,2\delta] \to U_0$ with $\gamma(0) = y$ and $\gamma(2\delta) = x$. We then have 
	\begin{align*}
		\left| \left\|\varphi\left(v^{(n)}(x)\right) \right|_{L^2(G)} - \left\|\varphi\left( v^{(n)}(y) \right) \right\|_{L^2(G)} \right| &\leq \int_0^{2\delta} \left|\left\langle \grad\left( \varphi \circ v^{(n)} \right)(\gamma(t)), \gamma'(t) \right\rangle_{T^1N} \right| \, dt\\
		&\leq \frac C {f\kappa^n} \int_0^{2\delta} \left( \Phi \circ v^{(n)} \right)(\gamma(t)) \, dt\\
		&\leq \frac C {f \kappa^n} {2\delta} e^{\frac C {f\kappa^n} d(x,y)} \Phi\left( v^{(n)}(y) \right)\\
		&\leq \frac C {f} \delta e^{\frac C {f} 2\delta} \Phi\left( v^{(n)}(y) \right)
	\end{align*}
	by the fundamental theorem of calculus and our bounds on $\varphi$. Fix $\delta$ small enough to ensure
	\[
		\frac C {f} 2\delta e^{\frac C f 2\delta} \leq \frac 1 8
	\]
	and
	\[
		e^{\frac C f 2\delta} \leq 2
	\]
	hold simultaneously -- note that we really only require that $C \delta$ is sufficiently small, and so $\delta$ can be chosen inversely proportional to $C$. We therefore have
	\begin{equation}
		\Phi\left( v^{(n)}(y) \right) \leq 2 \Phi\left( v^{(n)}(x) \right)
		\label{lipest2}
	\end{equation}	
	and
	\begin{equation}
		\left\| \varphi\left( v^{(n)}(x) \right) \right\|_{L^2(G)} \leq \left\| \varphi\left( v^{(n)}(y) \right) \right\|_{L^2(G)} + \frac 1 8 \Phi\left( v^{(n)}(y) \right)
		\label{lipest}
	\end{equation}
	whenever $d(x,y) < 2\delta$. To conclude, suppose that we had
	\[
		{\left\| \varphi\left(v^{(n)}(y)\right) \right\|_{L^2(G)}} \leq \frac 1 4{\Phi\left(v^{(n)}(y)\right)}
	\]
	at some $y \in B_{\delta}(u_0)$, for a given $u_0 \in U$ and $n$. Then by \eqref{lipest} and \eqref{lipest2}, we must have
	\begin{align*}
		\left\| \varphi\left( v^{(n)}(x) \right) \right\|_{L^2(G)} &\leq \frac 1 4 \Phi\left( v^{(n)}(y) \right) + \frac 1 8 \Phi\left( v^{(n)}(y) \right)\\
		&\leq \frac 2 4 \Phi\left( v^{(n)}(x) \right) + \frac 2 8 \Phi\left( v^{(n)}(x) \right)\\
		&\leq \frac 3 4 \Phi\left( v^{(n)}(x) \right)
	\end{align*}
	for any $x \in B_{\delta}(u_0)$, as desired.
\end{proof}

\begin{lemma}
	Fix $C > 0$, $\varphi \in C^1(U, V^\rho)$ and $\Phi \in \mathcal K_C$. Take $\delta > 0$ as in Lemma \ref{alternative} and suppose that we have
	\begin{align*}
		\|\varphi(u)\|_{L^2(G)} &< \Phi(u)\\
		\sup_{w\in T^1_uU} \|\left( d\varphi \right)_u(w)\| &< C \Phi(u)
	\end{align*}
	for all $u \in U$. If $v^{(n)}$ is a past defined on $B_\delta(u_0)$ satisfying
	\[
		\left\| \varphi \left( v^{(n)}(u) \right) \right\|_{L^2(G)} \geq \frac 1 4 \Phi\left( v^{(n)}(u) \right)
	\]
	for all $u \in B_\delta(u_0)$, and some given $u_0 \in U$, then we have
	\[
		\sup_{w \in T^1_uB_\delta(u_0)} \left\| \left( d \left( \frac{\varphi \circ v^{(n)}}{\left\|\varphi \circ v^{(n)} \right\|_{\mathrlap{L^2(G)}}} \hphantom{L^2(G}\right) \right)_u(w) \right\|_{L^2(G)} \leq \frac {8C}{f \kappa^n}
	\]
	for all $u \in B_\delta(u_0)$.
	\label{lipschitzest}
\end{lemma}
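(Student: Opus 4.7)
The plan is to apply the quotient rule to the ratio $(\varphi \circ v^{(n)})/\|\varphi \circ v^{(n)}\|_{L^2(G)}$ and combine three ingredients: the contraction of $v^{(n)}$ coming from the expansion of $\sigma^n$, the bound on $d\varphi$ from the hypothesis, and the lower bound $\|\varphi \circ v^{(n)}\| \geq \tfrac{1}{4}\Phi \circ v^{(n)}$ we have assumed on $B_\delta(u_0)$.

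First I would write $\psi \coloneqq \varphi \circ v^{(n)}$ and $h \coloneqq \|\psi\|_{L^2(G)}$, and bound $\|(d\psi)_u(w)\|_{L^2(G)}$ for a unit tangent vector $w$. By the chain rule and the definition of $f\kappa^n$ as the slowest expansion rate for $d\sigma^n$, the preimage differential $dv^{(n)}_u$ has operator norm at most $1/(f\kappa^n)$, and so
\[
\|(d\psi)_u(w)\|_{L^2(G)} \leq \frac{1}{f\kappa^n}\cdot C\,\Phi(v^{(n)}(u))
\]
using the hypothesis $\|(d\varphi)_{v^{(n)}(u)}(\cdot)\| < C\Phi(v^{(n)}(u))$. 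A standard Cauchy–Schwarz argument for the differential of a Hilbert norm gives $|(dh)_u(w)| \leq \|(d\psi)_u(w)\|_{L^2(G)}$, and hence the same bound applies to $|(dh)_u(w)|$.

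Next I would apply the quotient rule in the form
\[
d\!\left(\frac{\psi}{h}\right)_{\!u}(w) = \frac{(d\psi)_u(w)}{h(u)} - \frac{\psi(u)\,(dh)_u(w)}{h(u)^2}
\]
and estimate each piece. The hypothesis $h(u) \geq \tfrac{1}{4}\Phi(v^{(n)}(u))$ turns the first term into a quantity bounded by $\tfrac{4C}{f\kappa^n}$. For the second, we note that $\|\psi(u)\|_{L^2(G)}/h(u) = 1$ exactly, so it reduces to $|(dh)_u(w)|/h(u) \leq \tfrac{4C}{f\kappa^n}$ as well. Summing the two bounds yields the required $\tfrac{8C}{f\kappa^n}$.

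There is no genuine obstacle here; this is a routine calculation of the sort one would expect for normalizing a function with controlled norm and controlled derivative. The only subtle point is the Hilbert-valued version of the identity $|d\|\psi\|| \leq \|d\psi\|$, which holds because $(dh)_u(w) = \mathrm{Re}\langle \psi(u)/\|\psi(u)\|,(d\psi)_u(w)\rangle$ at points where $h(u)>0$, and the Cauchy–Schwarz inequality then gives the claimed bound. Everything else is a direct assembly of the hypotheses.
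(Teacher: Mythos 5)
Your proof is correct and takes essentially the same approach as the paper: both apply the quotient rule, use $|d\|\psi\|| \leq \|d\psi\|$ and the chain rule together with the slowest expansion rate $f\kappa^n$, and conclude with the lower bound $\|\varphi \circ v^{(n)}\| \geq \tfrac14 \Phi \circ v^{(n)}$. The only superficial difference is that you bound the two quotient-rule terms separately (each by $4C/(f\kappa^n)$), whereas the paper merges them via a triangle inequality into a single $2\|d\psi\|/h$ term before substituting.
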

\begin{proof}
	Differentiating the fraction, we see that we need to bound 
	\begin{equation}
		\sup_{w \in T^1_uB_\delta(u_0)} \left\| \frac{\left(d\left(\varphi \circ v^{(n)}\right)\right)_u\left(w\right) \left\|\varphi\left( v^{(n)}(u) \right)\right\|_{L^2(G)} -  \varphi\left( v^{(n)}(u) \right) \left(d\left( \left\| \varphi \circ v^{(n)} \right\|_{L^2(G)} \right)\right)_u(w)}{\left\|\varphi\left( v^{(n)}(u) \right)\right\|_{L^2(G)} } \right\|_{\mathrlap{L^2(G)}}
		\label{lip1}
	\end{equation}
	for any $u \in U$.
	Note that we always have
	\[
		\hspace{12pt}\left( d\left( \left\| \varphi \circ v^{(n)} \right\|_{L^2(G)} \right) \right)_u(w) \leq \left\| \left( d\left( \varphi \circ v^{(n)} \right) \right)_u(w) \right\|_{L^2(G)}
	\]
	and so \eqref{lip1} is at most
	\begin{equation}
		\sup_{w \in T^1_uB_\delta(u_0)} \frac{2\left\|\varphi\left( v^{(n)}(u) \right)\right\|_{L^2(G)}\left\|\left(d\left(\varphi \circ v^{(n)}\right)\right)_u\left(w\right)\right\|_{\mathrlap{L^2(G)}}}{\left\|\varphi\left( v^{(n)}(u) \right)\right\|^2_{\mathrlap{L^2(G)}}}
		\label{lip2}
	\end{equation}
	by the triangle inequality. Cancelling terms, we can reduce \eqref{lip2} to
	\begin{equation}
		\sup_{w \in T^1_uB_\delta(u_0)} \frac{2\left\|\left(d\left(\varphi \circ v^{(n)}\right)\right)_u\left(w\right)\right\|_{\mathrlap{L^2(G)}}}{\left\|\varphi\left( v^{(n)}(u) \right)\right\|_{\mathrlap{L^2(G)}}}
		\label{lip3}
	\end{equation}
	which is at most
	\begin{equation}
		\sup_{\substack{w \in T^1_uB_\delta(u_0)\\w' \in T^1_{v^{(n)}(u)}B_\delta(u_0)}} \frac{2\left\|\left(d\left(\varphi\right)\right)_{v^{(n)}(u)}\left(w'\right)\right\|_{{L^2(G)}} \left\|\left( dv^{(n)} \right)_u(w)\right\|_{\mathrlap{T^1N}}}{\left\|\varphi\left( v^{(n)}(u) \right)\right\|_{\mathrlap{L^2(G)}}}
		\label{lip4}
	\end{equation}
	by the chain rule. Since $f \kappa^n$ is the slowest expansion rate of any vector in $T^1U$ under $d\sigma^n$, we can bound
	\begin{equation}
		\sup_{w \in T^1_uU} \left\| \left( dv^{(n)} \right)_u(w) \right\|_{T^1N} \leq \frac 1 { f \kappa^n}
	\end{equation}
	for all $u \in U$. By hypothesis, we also have
	\begin{equation}
		\sup_{w' \in T^1_{v^{(n)}(u)}U} \left\|  \left( d\varphi \right)_{v^{(n)}(u)}(w')\right\|_{L^2(G)} \leq C \Phi\left( v^{(n)}(u) \right)
	\end{equation}
	for all $u \in U$, and so \eqref{lip4} can be bounded above by
	\begin{equation}
		\frac {2 C \Phi\left( v^{(n)}(u) \right)}{f\kappa^n \left\| \varphi\left( v^{(n)}(u) \right)\right\|_{\mathrlap{L^2(G)}}}
		\label{lip5}
	\end{equation}
	which is in turn at most
	\begin{equation}
	\frac {8 C \Phi\left( v^{(n)}(u) \right)}{f\kappa^n \Phi\left( v^{(n)}(u) \right)}
	\end{equation}
	by hypothesis. Cancelling $\Phi$, we obtain the result desired.
\end{proof}

Before we get to the main argument in this section, we need an elementary linear algebra result.

\begin{proposition}
	Let $(V, \cdot)$ be an inner product space, and suppose that we have
	\[
		\| \hat v - \hat w \| \, \geq \epsilon
	\]
	where $\hat v, \hat w$ denote the unit vectors in the directions of $v$ and $w$ respectively. If $\|v\| \, \leq \|w\|$, then
	\[
		\| v + w \| \, \leq \left( 1 - \frac {\epsilon^2} 2 \right)\|v\| + \|w\|
	\]
	with equality exactly when $\| \hat v - \hat w \| \, = \epsilon$.
	\label{linearalgebra}
\end{proposition}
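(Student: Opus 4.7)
The plan is to reduce the statement to a direct expansion in the inner product. First, I would translate the geometric hypothesis $\|\hat v - \hat w\| \geq \epsilon$ into an analytic one via the polarization identity $\|\hat v - \hat w\|^2 = 2 - 2\langle \hat v, \hat w\rangle$. This immediately gives $\langle \hat v, \hat w\rangle \leq 1 - \epsilon^2/2$, and the inequality is sharp precisely when $\|\hat v - \hat w\| = \epsilon$. Multiplying through by $\|v\|\|w\|$ yields $\langle v, w\rangle \leq (1 - \epsilon^2/2)\|v\|\|w\|$.

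Next, I would expand
\[
\|v + w\|^2 = \|v\|^2 + 2\langle v,w\rangle + \|w\|^2 \leq \|v\|^2 + 2\left(1-\frac{\epsilon^2}{2}\right)\|v\|\|w\| + \|w\|^2
\]
and compare with the square
\[
\left(\left(1 - \frac{\epsilon^2}{2}\right)\|v\| + \|w\|\right)^2 = \left(1 - \frac{\epsilon^2}{2}\right)^2\|v\|^2 + 2\left(1 - \frac{\epsilon^2}{2}\right)\|v\|\|w\| + \|w\|^2.
\]
The two expressions agree in the cross-term and the $\|w\|^2$ term, so their difference is purely a quadratic correction in $\|v\|^2$; this is the natural place for the hypothesis $\|v\| \leq \|w\|$ to enter, by absorbing the $\|v\|^2$ discrepancy into the cross-term $\|v\|\|w\|$. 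After that absorption, taking a square root (both sides are nonnegative because $\epsilon \leq 2$ and $\|v\| \leq \|w\|$) produces the desired linear bound.

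The equality analysis is then immediate: the only inequality used with slack is the cosine bound $\langle \hat v, \hat w\rangle \leq 1 - \epsilon^2/2$ in the first step, which is an equality exactly when $\|\hat v - \hat w\| = \epsilon$. The main obstacle I anticipate is the absorption step: the $\|v\|^2$ correction must be dominated cleanly by the cross-term, and this is where the ordering $\|v\| \leq \|w\|$ is essential. Equivalently, decomposing $v = (v\cdot\hat w)\hat w + v_\perp$ shows that the perpendicular component contributes additively to $\|v+w\|$, and only an asymmetric norm comparison lets that excess be folded into a linear refinement of the triangle inequality.
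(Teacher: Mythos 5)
Your approach (square, expand via polarization, compare the two quadratics) is genuinely different from the paper's, which opens by asserting the identity $\|v + w\| = |\hat v \cdot \hat w|\,\|v\| + \|w\|$ --- an identity that is simply false (take $v = (1,0)$, $w = (0,1)$: the left side is $\sqrt 2$, the right side is $1$). Your route is the more careful one, but the ``absorption'' step you flag as the main obstacle does not actually close. After expanding, the two quadratics agree exactly in the cross-term $2(1-\epsilon^2/2)\|v\|\|w\|$ and in $\|w\|^2$, so the entire discrepancy is
\[
\|v\|^2 - \left(1 - \frac{\epsilon^2}{2}\right)^2\|v\|^2 = \epsilon^2\left(1 - \frac{\epsilon^2}{4}\right)\|v\|^2,
\]
which is \emph{strictly positive} for $0 < \epsilon < 2$ and has no dependence on $\|w\|$ at all. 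There is no cross-term slack left to absorb it, and the ordering $\|v\| \leq \|w\|$ cannot help a term that does not contain $\|w\|$. The honest conclusion of your computation is that the proposition as stated is false: with $v = (1,0)$, $w = (0,1)$, and $\epsilon = \sqrt 2 = \|\hat v - \hat w\|$, the claimed right-hand side is $(1-1)\cdot 1 + 1 = 1$, while $\|v + w\| = \sqrt 2 > 1$, so both the inequality and the equality characterization fail.

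What your expansion actually gives, if you keep pushing, is $\|v+w\|^2 \leq (\|v\|+\|w\|)^2 - \epsilon^2\|v\|\,\|w\|$, and this is where $\|v\| \leq \|w\|$ genuinely enters: it yields $\|w\| \geq \tfrac12(\|v\|+\|w\|)$, hence
\[
\|v+w\|^2 \leq (\|v\|+\|w\|)\left(\left(1-\tfrac{\epsilon^2}{2}\right)\|v\| + \|w\|\right),
\]
and the AM--GM inequality applied to these two factors gives the weaker but correct bound
\[
\|v + w\| \leq \left(1 - \frac{\epsilon^2}{4}\right)\|v\| + \|w\|.
\]
This is all the downstream transfer-operator estimates actually need (they only use that some constant times $\epsilon^2$ appears). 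So the gap is not a missing idea in your plan but an incorrect constant in the statement itself: the denominator must be $4$, not $2$, and the equality clause should be dropped.
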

\begin{proof}
	We expand using the polarization identity, obtaining
	\begin{align*}
		\|v + w\| &=| \hat v \cdot \hat w | \|v\| + \|w\| \\
		&=\left( \frac{\|\hat v\|^2 + \|\hat w\|^2 - \|\hat v - \hat w\|^2}{2} \right) \|v\|  + \|w\|\\
	&\leq \left( 1 - \frac {\epsilon^2} 2 \right) \|v\| + \|w\|
	\end{align*}
	as desired.
\end{proof}

We might be tempted to argue that, if $\varphi \in C^1(U, V^\rho)$ is controlled by $\Phi \in \mathcal K_C$ as in Lemma \ref{alternative}, then we can similarly bound $\left\| \left(\mathcal L^n_{z,\rho} \varphi\right)(u) \right\|_{L^2(G)}$ by $\left(\mathcal L^n_{\Re(z),0} \Phi\right)(u)$ pointwise -- however, while this turns out to be true, it is unhelpful since $\mathcal L^n_{P(\varsigma),0} \Phi$ fails to contract as $n \to \infty$. Indeed, since $\Phi$ is strictly positive by definition, $\mathcal L^n_{P(\varsigma),0} \Phi$ will converge to $\int_U \Phi \, d\nu^u > 0$.

The solution is to artificially introduce contraction into the transfer operators, and Theorem \ref{nli} is precisely what ensures that we can do this while maintaining a pointwise bound. In the next lemma, we will show that we can uniformly and explicitly bound $\left\| \left( \mathcal L^n_{z,\rho} \varphi \right)(u) \right\|_{L^2(G)}$ away from $\left( \mathcal L^n_{\Re(z),0} \Phi \right)(u)$ on a measurable portion of any sufficiently small set. 
\begin{lemma}
	\label{betalemma}
	Fix $C > 0$, $\varphi \in C^1(U, V^\rho)$ and $\Phi \in \mathcal K_C$ with
	\begin{align*}
		\left\| \varphi(u) \right\|_{L^2(G)} &< \Phi(u)\\
		\sup_{w \in T_u^1U} \left\| \left( d\varphi \right)_u(w) \right\|_{L^2(G)} &< C \Phi(u)
	\end{align*}
	for all $u \in U$. Let $U_{lni} \subset U$ be the open subset given by Theorem \ref{nli} and let $\delta > 0$ be the constant given by Lemma \ref{alternative}.
	There are constants $n_0 > 0$, $\epsilon > 0$ and $s < 1$ so that, for any $x \in U_{lni}$ with $B_\delta(x) \subset U_{lni}$, we can find
	\begin{itemize}
		\item a point $y \in U_{lni}$ with $B_{s\delta}(y) \subset B_\delta(x)$ and
		\item a pair of pasts $v_1^{(n_0)}, v_2^{(n_0)}$ defined on $B_\delta(x)$
	\end{itemize}
	for which we can bound 
	\[
		\left\|e^{\alpha_z^{(n_0)}\left(v_1^{(n_0)}(u)\right)}\hspace{-1.5pt}\rho\left( \Hol^{(n_0)}\hspace{-2pt}\left( v_1^{(n_0)}(u) \right) \right)\hspace{-1.5pt} \varphi\left( v_1^{(n_0)}(u) \right)\hspace{-1pt} + \hspace{-1pt} e^{\alpha_z^{(n_0)}\left(v_2^{(n_0)}(u)\right)}\hspace{-1.5pt} \rho\left( \Hol^{(n_0)} \hspace{-2pt}\left( v_2^{(n_0)}(u) \right) \right)\hspace{-1.5pt} \varphi\left( v_2^{(n_0)}(u) \right) \right\|_{L^2(G)}
	\]
	above by
	\[
		\left( 1 - \frac {\left( \epsilon \delta (1+|\Im(z)|) \|\rho\| \right)^2}{2048} \right) e^{\alpha_{\Re(z)}^{(n_0)}\left(v_1^{(n_0)}(u)\right)} \Phi\left( v_1^{(n_0)}(u) \right) + e^{\alpha_{\Re(z)}^{(n_0)}\left(v_2^{(n_0)}(u)\right)} \Phi\left( v_2^{(n_0)}(u) \right) 
	\]
	for all $u \in B_{s\delta}(y)$ and all $z \in \bb C$ with $\left|\Re(z) - P(\varsigma) \right| < 1$. The constant $\epsilon$ can be chosen independently of $\rho$ and $C$, while $n_0$ depends only on $\frac C {\|\rho\|}$. The constant $s$ can be chosen uniformly in $C$ and $\rho$. 
\end{lemma}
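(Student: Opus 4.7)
The plan is to combine the non-integrability estimate from Theorem \ref{nli} with a dichotomy from Lemma \ref{alternative}. First I would fix $x \in U_{lni}$ with $B_\delta(x) \subset U_{lni}$ and apply Theorem \ref{nli} to obtain consistent pasts $v_1, v_2$, a $C^1$ unit vector field $w$ on $U_{lni}$, and the integer $n_0$. Writing
\[ a_i(u) \coloneqq e^{\alpha_z^{(n_0)}(v_i^{(n_0)}(u))} \rho\left( \Hol^{(n_0)}(v_i^{(n_0)}(u)) \right) \varphi(v_i^{(n_0)}(u)), \]
the aim is to improve on the trivial triangle-inequality bound $\|a_1(u)\|_{L^2(G)} + \|a_2(u)\|_{L^2(G)}$ by a factor of $1 - \eta$ on the smaller term, where $\eta$ is the coefficient appearing in the statement. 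Lemma \ref{alternative} provides a dichotomy on $B_\delta(x)$: for each $i$, either $\|\varphi(v_i^{(n_0)}(u))\|_{L^2(G)} \leq \tfrac{3}{4}\Phi(v_i^{(n_0)}(u))$ uniformly on $B_\delta(x)$ (the ``small'' alternative) or $\|\varphi(v_i^{(n_0)}(u))\|_{L^2(G)} \geq \tfrac{1}{4}\Phi(v_i^{(n_0)}(u))$ uniformly (the ``large'' alternative). If at least one past is in the small regime, the triangle inequality immediately produces a coefficient of $\tfrac{3}{4}$ on the corresponding term, and choosing $\epsilon$ and $\delta$ small enough ensures that $\tfrac{3}{4} \leq 1 - \eta$ in precisely the parameter range where the small-regime case is needed.

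The substantive case is when both pasts are in the large regime throughout $B_\delta(x)$. Here I would normalize $\hat a_i(u) \coloneqq a_i(u)/\|a_i(u)\|_{L^2(G)}$ and try to produce a sub-ball $B_{s\delta}(y) \subset B_\delta(x)$ on which $\|\hat a_1(u) - \hat a_2(u)\|_{L^2(G)}$ is bounded below by a quantity of order $\epsilon\delta(1+|\Im(z)|)\|\rho\|$. Differentiating $\hat a_i$ along $w$ decomposes into three contributions: a phase rotation of magnitude $|\Im(z)| \cdot |w(\tau^{(n_0)} \circ v_i^{(n_0)})|$ from the factor $e^{-i\Im(z)\tau^{(n_0)}}$ in $a_i$; a representation-theoretic rotation of the form $d\rho(X_{w, v_i}^{\symb}(u)) \hat a_i(u)$ coming from variation of the holonomy $\Hol^{(n_0)}$; and the variation of $\hat\varphi$ itself, which by Lemma \ref{lipschitzest} is of order $C/(f\kappa^{n_0})$ and thus negligible once $n_0$ is chosen large in terms of $C/\|\rho\|$. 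Subtracting the two derivatives and invoking the lower bound from Theorem \ref{nli} on $d\rho(X_{w, v_1}^{\symb} - X_{w, v_2}^{\symb})(\hat\varphi)$ produces a derivative estimate of the required order.

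To conclude, I would argue that either $\|\hat a_1(x) - \hat a_2(x)\|_{L^2(G)}$ already attains the desired size at $x$---in which case continuity yields the estimate on a sub-ball around $y = x$---or else integrating along the flowline of $w$ for a distance of order $\delta$ reaches a point $y \in B_\delta(x)$ at which the difference is of order $\epsilon\delta(1+|\Im(z)|)\|\rho\|$, and continuity once again provides the sub-ball $B_{s\delta}(y)$. Applying Proposition \ref{linearalgebra} to the pair $a_1, a_2$ on $B_{s\delta}(y)$ then yields the claimed coefficient $1 - (\epsilon\delta(1+|\Im(z)|)\|\rho\|)^2/2048$ on the smaller of the two terms. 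The main obstacle will be the derivative analysis: one must carefully disentangle the phase and representation-theoretic contributions to extract the combined factor $(1+|\Im(z)|)\|\rho\|$, maintain uniformity in $\rho$ (which relies crucially on Theorem \ref{nli} delivering a lower bound proportional to $\|\rho\|$ rather than merely positive), and arrange that $n_0$ depends on $\rho$ and $C$ only through the ratio $C/\|\rho\|$ so that $s$ and $\epsilon$ can be chosen uniformly as the statement requires.
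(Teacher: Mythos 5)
Your outline matches the paper's proof in its essentials: the same dichotomy from Lemma \ref{alternative} (with the small-regime alternative dispatched by the triangle inequality and the scaling $\delta \sim 1/C$ making the $\frac{3}{4}$ bound dominate), the same normalization $\hat a_\ell$ in the large-regime case, the same decomposition of the derivative along $w$ into a phase piece of order $|\Im(z)|$, a holonomy piece controlled from below by Theorem \ref{nli}, and a $\hat\varphi$ piece killed by Lemma \ref{lipschitzest} once $n_0$ is taken large relative to $C/\|\rho\|$; the same pigeonhole on $\|\hat a_1 - \hat a_2\|$ between $x$ and a nearby $y$; and the same application of Proposition \ref{linearalgebra}. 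You also correctly flag the two delicate points (extracting the combined $(1+|\Im(z)|)\|\rho\|$ factor, and arranging $n_0$ to depend only on $C/\|\rho\|$).

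However, there is a step you omit that is not cosmetic and is required for the statement as written. The lemma fixes the \emph{ordering} of the pasts: the contraction factor $\bigl(1 - (\epsilon\delta(1+|\Im(z)|)\|\rho\|)^2/2048\bigr)$ must sit on the $v_1^{(n_0)}$-term uniformly for all $u \in B_{s\delta}(y)$. Proposition \ref{linearalgebra}, applied pointwise, only places the contraction on whichever of the two weighted terms $e^{\alpha_{\Re(z)}^{(n_0)}(v_\ell^{(n_0)}(u))}\Phi(v_\ell^{(n_0)}(u))$ happens to be smaller at the given $u$, and which of the two is smaller can change as $u$ ranges over the ball. The paper resolves this by using the Lipschitz estimate on $e^{\alpha_{\Re(z)}^{(n_0)}}\Phi$ together with the $\Phi$-doubling from Lemma \ref{alternative} to show that the two weighted terms remain comparable up to a factor of $4$ across $B_{s\delta}(y)$; this lets one transfer the contraction from the $v_2$-term to the $v_1$-term (or vice versa) at the cost of dividing the contraction coefficient by $4$, which is precisely why the final constant is $1/2048$ rather than the $1/512$ that the raw application of Proposition \ref{linearalgebra} gives at a single point. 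Without this uniformization argument, the claimed estimate, with a fixed pair $v_1^{(n_0)}, v_2^{(n_0)}$ and a fixed side carrying the contraction, does not follow from the pointwise inequality.
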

\begin{proof} 
	We will deal with a fixed $n > 0$ throughout the proof, and specify how large $n$ needs to be as we proceed -- it is important to note that we cannot deal with arbitrarily large $n$ without foregoing the uniformity of the bounds we wish to obtain. We proceed in cases depending on which alternative of Lemma \ref{alternative} holds. For any pasts $v_1^{(n)}$ and $v_2^{(n)}$, if we have
	\begin{equation}
		\left\|\varphi\left(v_{1}^{(n)}(u)\right)\right\|_{L^2(G)} \leq \frac 3 4 \Phi\left( v_{1}^{(n)}(u) \right)
		\label{firstalt}
	\end{equation}
	for all $u \in B_\delta(x)$, then since $\rho$ is unitary we can clearly bound
	\[
		\left\|e^{\alpha_z^{(n)}\left(v_1^{(n)}(u)\right)}\rho\left( \Hol^{(n)}\left( v_1^{(n)}(u) \right) \right) \varphi\left( v_1^{(n)}(u) \right) + e^{\alpha_z^{(n)}\left(v_2^{(n)}(u)\right)} \rho\left( \Hol^{(n)}\left( v_2^{(n)}(u) \right) \right) \varphi\left( v_2^{(n)}(u) \right) \right\|_{\mathrlap{L^2(G)}}
	\]
	above by
	\begin{equation}
		\frac 3 4 \left(e^{\alpha_{\Re(z)}^{(n)}\left(v_1^{(n)}(u)\right)} \Phi\left( v_1^{(n)}(u) \right) + e^{\alpha_{\Re(z)}^{(n)}\left(v_2^{(n)}(u)\right)} \Phi\left( v_2^{(n)}(u) \right) \right)
		\label{alt1}
	\end{equation}
	for all $u \in B_\delta(x)$. In this case, we are done by simply setting $y \coloneqq x$. Similarly, if we had 
	\begin{equation}
		\left\|\varphi\left(v_{2}^{(n)}(u)\right)\right\|_{L^2(G)} \leq \frac 3 4 \Phi\left( v_{2}^{(n)}(u) \right)
	\end{equation}
	for all $u \in B_\delta(x)$, then we are again done by setting $y \coloneqq x$, up to interchanging our choice of $v_1$ and $v_2$. So we may as well assume that the second alternative of Lemma \ref{alternative} holds for both $v_1^{(n)}$ and $v_2^{(n)}$, and that we therefore have
	\begin{equation}
		\left\|\varphi\left(v_{\ell}^{(n)}(u)\right)\right\|_{L^2(G)} \geq \frac 1 4 \Phi\left( v_{\ell}^{(n)}(u) \right)
		\label{secondalt}
	\end{equation}
	for all $u \in B_\delta(x)$. We will temporarily abbreviate
		\begin{align*}
			g_\ell(u) &\coloneqq \Hol^{(n)}\left( v_\ell^{(n)}(u) \right)\\
			\hat \varphi_\ell(u) &\coloneqq \frac{\varphi \left( v_\ell^{(n)}(u) \right)}{\left\| \varphi \left( v_\ell^{(n)}(u) \right)\right\|_{\mathrlap{L^2(G)}}}
			\intertext{and}
			\hat \psi_\ell (u) &\coloneqq e^{\Im(z)\tau^{(n)}\left( v_\ell^{(n)}(u) \right) i }\frac {\varphi\left( v_\ell^{(n)}(u) \right)}{\left\| \varphi \left(  v_\ell^{(n)}(u) \right)\right\|_{\mathrlap{L^2(G)}}}
		\end{align*}
	for the sake of clarity. Note that $\hat\varphi_\ell$ and $\hat \psi_\ell$ are well-defined on $B_\delta(x)$ as an immediate consequence of $\eqref{secondalt}$. Now, by reverse the triangle inequality,
	\begin{equation}
		\left\| \rho\left( g_1(x) \right) \hat \psi_1(x) - \rho\left( g_2(x) \right) \hat \psi_2(x)\right\|_{\mathrlap{L^2(G)}}
		\label{firsteq}
	\end{equation}
	is at least
	\begin{equation}
	\begin{gathered}
		\left\| \rho\left( g_{1}(x) \right) \hat\psi_{1}\left( y \right) - \rho\left( g_{2}(x) \right)\hat\psi_{2}\left( y \right)\right\|_{\mathrlap{L^2(G)}}\\
		- \left\| \rho\left( g_{1}(x) \right) \hat\psi_{1}\left( y \right) - \rho\left( g_{1}(x) \right)\hat\psi_{1}\left( x \right)\right\|_{L^2(G)}	- \left\| \rho\left( g_{2}(x) \right) \hat\psi_{2}\left( x \right) - \rho\left( g_{2}(x) \right)\hat\psi_{2}\left( y \right)\right\|_{L^2(G)}
		\label{nlistep1}
	\end{gathered}
\end{equation}
	for any $y \in B_\delta(x)$. Since group elements act by isometries with respect to the $L^2(G)$ norm, \eqref{nlistep1} is equal to
	\begin{gather*}
		\left\| \rho\left( g_2(y)g_2^{-1}(x)g_{1}(x) \right) \hat\psi_{1}\left( y \right) - \rho\left( g_2(y) \right)\hat\psi_{2}\left( y \right)\right\|_{\mathrlap{L^2(G)}}\\	- \left\| \hat\psi_{1}\left( y \right) - \hat\psi_{1}\left( x \right)\right\|_{L^2(G)}	- \left\|  \hat\psi_{2}\left( x \right) - \hat\psi_{2}\left( y \right)\right\|_{\mathrlap{L^2(G)}}
	\end{gather*}
	which we can bound below by
	\begin{equation}
	\begin{gathered}
		\left\| \rho\left( g_2(y)g_2^{-1}(x)g_{1}(x) \right) \hat\psi_{1}\left( y \right) - \rho\left( g_1(y) \right)\hat\psi_{1}\left( y \right)\right\|_{\mathrlap{L^2(G)}}\\	
		- \left\| \hat\psi_{1}\left( y \right) - \hat\psi_{1}\left( x \right)\right\|_{L^2(G)}	- \left\|  \hat\psi_{2}\left( x \right) - \hat\psi_{2}\left( y \right)\right\|_{\mathrlap{L^2(G)}}\\
			- \left\|\rho\left( g_2(y) \right)\hat\psi_2(y) - \rho\left( g_1(y) \right)\hat\psi_1(y)\right\|_{\mathrlap{L^2(G)}}
	\end{gathered}
	\label{nlistep5}
\end{equation}
using the reverse triangle inequality once again. We can rewrite \eqref{nlistep5} as
	\begin{equation}
	\begin{gathered}
		\left\| \rho\left( g_{1}(x)g_1^{-1}(y)g_1(y) \right) \hat\psi_{1}\left( y \right) - \rho\left( g_2(x)g_2^{-1}(y)g_1(y) \right)\hat\psi_{1}\left( y \right)\right\|_{\mathrlap{L^2(G)}}\\	
		- \left\| \hat\psi_{1}\left( y \right) - \hat\psi_{1}\left( x \right)\right\|_{L^2(G)}	- \left\|  \hat\psi_{2}\left( x \right) - \hat\psi_{2}\left( y \right)\right\|_{\mathrlap{L^2(G)}}\\
			- \left\|\rho\left( g_2(y) \right)\hat\psi_2(y) - \rho\left( g_1(y) \right)\hat\psi_1(y)\right\|_{\mathrlap{L^2(G)}}
			\label{nlistep2}
	\end{gathered}
	\end{equation}
	by replacing $\hat \psi_1(y)$ with $g_1^{-1}(y)g_1(y)\hat \psi_1(y)$ in the first term of the first line, and multiplying both terms on the first line by $g_2(x)g_2^{-1}(y)$.
	Hence, \eqref{firsteq} is bounded below by \eqref{nlistep2}, and we see that
	\begin{equation}
		 \left\| \rho\left( g_1(x) \right) \hat \psi_1(x) - \rho\left( g_2(x) \right) \hat \psi_2(x)\right\|_{L^2(G)} + \left\|\rho\left( g_2(y) \right)\hat\psi_2(y) - \rho\left( g_1(y) \right)\hat\psi_1(y)\right\|_{\mathrlap{L^2(G)}}
		\label{nlistep6}
	\end{equation}
	is at least
	\begin{equation}
		\begin{gathered}
			\left\| \rho\left( g_{1}(x)g_1^{-1}(y)g_1(y) \right) \hat\psi_{1}\left( y \right) - \rho\left( g_2(x)g_2^{-1}(y)g_1(y) \right)\hat\psi_{1}\left( y \right)\right\|_{\mathrlap{L^2(G)}}\\	
		- \left\| \hat\psi_{1}\left( y \right) - \hat\psi_{1}\left( x \right)\right\|_{L^2(G)}	- \left\|  \hat\psi_{2}\left( x \right) - \hat\psi_{2}\left( y \right)\right\|_{\mathrlap{L^2(G)}} \\ 
		\end{gathered}
		\label{nlistep7}
	\end{equation}
	after rearranging.
	By Theorem \ref{nli}, there is an $N > 0$, $\epsilon > 0$, a choice of pasts $v_1^{(n)}$ and $v_2^{(n)}$, and a $y$ with $d(x,y) = \frac \delta 2$ for which we have
	\begin{equation}
		\label{mainest}
		\left\| \rho\left( g_{1}(x)g_1^{-1}(y)g_1(y) \right) \hat\psi_{1}\left( y \right) - \rho\left( g_2(x)g_2^{-1}(y)g_1(y) \right)\hat\psi_{1}\left( y \right)\right\|_{L^2(G)} \geq \epsilon (1+|\Im(z)|) \|\rho\| \frac \delta 2
	\end{equation}
	for all $n \geq N$ -- note that we have applied the theorem to
	\[
		e^{\Im(z) \tau^{(n)}(u) i}\rho\left( \Hol^{(n)}(u) \right)  \frac{\varphi(u)}{\|\varphi(u)\|_{\mathrlap{L^2(G)}}}
	\]
	which is certainly a smooth function in $C^1(U, V^\rho)$. On the other hand, by Lemma \ref{lipschitzest}, $\hat \varphi_\ell(u)$ is at worst $\frac {8C}{f\kappa^n}$-Lipschitz in $u$ on $B_\delta(x)$, and we can estimate
	\begin{align*}
		\left|\left(d \left(e^{\Im(z)\left(\tau^{(n)} \circ v^{(n)}_\ell\right)i}\right)\right)_u(w) \right|_{L^2(G)}&= \left(\left|\Im(z) e^{\Im(z) \left( \tau^{(n)} \circ v^{(n)}_\ell \right)i} \right|\right)\left|\sum_{i=1}^n \left( d\left( \tau \circ v_\ell^i \right) \right)_{v_\ell^{(n)}(u)}(w') \right| \\
		&\leq |\Im(z)|\left(\sum_{i=1}^n \frac{\|\tau\|_{C^1}}{f\kappa^i}\right)\\
		&\leq |\Im(z)|\frac {\|\tau\|_{\mathrlap{C^1}}}{f(\kappa - 1)}
	\end{align*}
for all $u \in U$, $w \in T^1_uB_\delta(x)$ and $w' \in T^1_{v_\ell(u)}B_\delta(x)$. Hence, we can bound
	\[
		\left\| \hat \psi_\ell(x) - \hat \psi_\ell(y) \right\| \leq \left(\frac {8C}{f\kappa^n} + |\Im(z)| \frac{\|\tau\|_{C^1}}{f(\kappa - 1)}\right)d(x,y)
	\]
	since we chose $y \in B_\delta(x)$. Suppose that $n$ and $\|\rho\|$ are large enough so that we have
\begin{equation}
	\left( \frac {8 C}{f \kappa^n} + |\Im(z)| \frac{\|\tau\|_{C^1}}{f(\kappa -1)} \right) < \frac {\epsilon} 8 (1+|\Im(z)|) \|\rho\|
	\label{largen}
\end{equation}
for some constant $K > 0$. Note that we can make this choice of $n$ so that it depends only on $\frac C {\|\rho\|}$ and not $\|\rho\|$ directly; moreover, the requirement that $\|\rho\|$ be sufficiently large can be made absolute, and in particular is independent of $z$. This ensures
	\begin{equation}
		\left\| \hat \psi_\ell(x) - \hat \psi_\ell(y)\right\|_{L^2(G)} \leq \frac \epsilon 8 \left( 1 + |\Im(z)| \right)\| \rho \| \delta
		\label{lipest5}
	\end{equation}
	since $d(x,y) \leq \delta$. Note that our choice of $n$ here depends only on $\frac C {\|\rho\|}$. Plugging \eqref{mainest} and \eqref{lipest5} into \eqref{nlistep7}, we conclude that 
\begin{gather}
	\left\|\rho(g_1(x))\hat \psi_1(x) - \rho\left( g_2(x) \right)\hat \psi_2(x) \right\|_{L^2(G)} + \left\|\rho(g_1(y)) \hat\psi_1(y) - \rho\left( g_2(y) \right)\hat \psi_2(y)\right\|_{L^2(G)} \geq \frac \epsilon 4 \delta (1+|\Im(z)|) \|\rho\|
\end{gather}
and so 
\begin{equation}
	\left\|\rho(g_1(x))\hat \psi_1(x) - \rho\left( g_2(x) \right)\hat \psi_2(x) \right\|_{L^2(G)} > \frac \epsilon 8 \delta (1+|\Im(z)|) \|\rho\|
\label{option1}
\end{equation}
or
\begin{equation}
	\left\|\rho(g_1(y))\hat \psi_1(y) - \rho\left( g_2(y) \right)\hat \psi_2(y) \right\|_{L^2(G)} > \frac \epsilon 8 \delta (1+|\Im(z)|) \|\rho\|
	\label{option2}
\end{equation}
must hold. Without loss of generality, suppose \eqref{option2} holds. Using the Lipschitz estimate on $\hat \varphi_\ell$, we can bound
\begin{align}
	\sup_{w \in T^1_uB_\delta(x)} \left\| \left( d\left( \rho \left( g_\ell \right) \hat \psi_\ell \right) \right)_u(w) \right\|_{L^2(G)}
\end{align}
by
\begin{align}
	\sup_{w \in T^1_uB_\delta(x)} \left\| \left(\left( (d\rho)_{g_\ell(u)} \circ \left( dg_\ell \right)_u \right)(w)\right) \hat \psi_\ell(u) + \rho\left( g_\ell(u) \right) \left( d \hat \psi_\ell \right)_u(w)\right\|_{L^2(G)} 
	\label{lipprod}
\end{align}
for all $u \in B_\delta(x)$. A straightforward calculation shows that
\begin{align*}
	\sup_{w \in T^1_uB_\delta(x)} \left\| \left( dg_\ell \right)_u(w) \right\| &\leq \sup_{w \in T^1_uB_\delta(x)} \sum_{i=1}^n \left\| \left( d\left( \Hol \circ v^{(i)} \right) \right)_u(w) \right\|_{T^1G}\\
	&\leq \sum_{i=1}^n \frac{\|\Hol\|_{\mathrlap{C^1}}}{f\kappa^i}\\
	&\leq \frac{\|\Hol\|_{\mathrlap{C^1}}}{f\left( \kappa - 1 \right)}
\end{align*}
and we have
\[
	\sup_{w' \in T^1_{g_\ell(u)}G} \left\| \left( d\rho \right)_{g_\ell(u)}(w') \right\|_{L^2(G)} \leq \|\rho\|
\]
by Definition \ref{derivativenorm}; note that since $\rho$ is a homomorphism, the operator norm of $d\rho$ at $g_\ell(u)$ is equivalent to the norm at the identity. Hence, \eqref{lipprod} is at most
\[
	\|\rho\| \frac{\|\Hol\|_{\mathrlap{C^1}}}{f\left( \kappa - 1 \right)} + \frac \epsilon 8 (1+|\Im(z)|) \|\rho\|
\]
by our choice of $\delta$, since $\rho$ acts by $L^2(G)$-isometries. We can make a uniform choice of $s < 1$ so that
\begin{equation}
	\label{sineq}
	s \left(\|\rho\| \frac{\|\Hol\|_{\mathrlap{C^1}}}{f\left( \kappa - 1 \right)} + \frac \epsilon 8 (1+|\Im(z)|) \|\rho\|
 \right) < \frac \epsilon {32} (1+|\Im(z)|) \|\rho\|
\end{equation}
and we then have
\begin{equation}
	\left\| \rho\left( g_1(y) \right) \hat \psi_1(y) - \rho \left( g_1(u) \right) \hat \psi_1(u) \right\|_{L^2(G)} < \frac \epsilon {32} \delta (1+|\Im(z)|) \|\rho\|
	\label{lipest1}
\end{equation}
and
\begin{equation}
	\left\| \rho\left( g_2(y) \right) \hat \psi_2(y) - \rho \left( g_2(u) \right) \hat \psi_2(u) \right\|_{L^2(G)} < \frac \epsilon {32} \delta (1+|\Im(z)|) \|\rho\|
	\label{lipest3}
\end{equation}
for all $u \in B_{s\delta}(y)$. Combining \eqref{lipest1} and \eqref{lipest3} with \eqref{option1}, we now have
\begin{equation}
	\left\| \rho\left( g_1(u) \right)\hat \psi_1(u) - \rho \left( g_2(u) \right) \hat \psi_2(u) \right\|_{L^2(G)} > \frac \epsilon {16} \delta (1+|\Im(z)|) \|\rho\|
\end{equation}
for all $u \in B_{s\delta}(y)$. Now, fix $u \in B_{s\delta}(y)$. By Proposition \ref{linearalgebra}, we can then bound
\begin{equation}
		\left\|e^{\alpha_z^{(n)}\left(v_1^{(n)}(u)\right)}\rho\left( \Hol^{(n)}\left( v_1^{(n)}(u) \right) \right) \varphi\left( v_1^{(n)}(u) \right) + e^{\alpha_z^{(n)}\left(v_2^{(n)}(u)\right)} \rho\left( \Hol^{(n)}\left( v_2^{(n)}(u) \right) \right) \varphi\left( v_2^{(n)}(u) \right) \right\|_{\mathrlap{L^2(G)}}
	\end{equation}
	above by
	\begin{equation}
		\left( 1 - \frac {(\epsilon \delta (1+|\Im(z)|) \|\rho\|
\|)^2}{512} \right) e^{\alpha_{\Re(z)}^{(n)}\left(v_1^{(n)}(u)\right)} \Phi\left( v_1^{(n)}(u) \right) + e^{\alpha_{\Re(z)}^{(n)}\left(v_2^{(n)}(u)\right)} \Phi\left( v_2^{(n)}(u) \right) 
		\label{firstoption2}
	\end{equation}
	assuming without loss of generality that
	\begin{equation}
		\label{linalgineq}
		\left\|e^{\alpha_{\Re(z)}^{(n)}\left( v_1^{(n)}(u) \right)} \varphi\left( v_1^{(n)}(u) \right)\right\|_{L^2(G)} \leq \left\|e^{\alpha_{\Re(z)}^{(n)}\left( v_2^{(n)}(u) \right)} \varphi\left( v_2^{(n)}(u) \right)\right\|_{L^2(G)}
	\end{equation}
	held for this particular $u$ -- this is true up to interchanging $v_1^{(n)}$ and $v_2^{(n)}$. It simply remains to ensure that a similar inequality extends to $B_{s\delta}(y)$ for this particular choice of $v_1^{(n)}$ and $v_2^{(n)}$ -- this is not immediate since \eqref{linalgineq} could certainly fail to hold on the entire ball $B_{s\delta}(y)$. 
	This will take some extra work. Note that we have
	\begin{align*}
		\sup_{w \in T^1_uU} \left\| \left(d \left(e^{\alpha_{\Re(z)}^{(n)}} \Phi \right) \right)_u(w)  \right\|_{L^2(G)} \hspace{-2pt} &\leq \hspace{-2pt} \sup_{w \in T^1_uU} \left|\left(de^{\alpha_{\Re(z)}^{(n)}}\right)_u(w) \right| \left\| \Phi(u) \right\|_{L^2(G)} \hspace{-1pt} + e^{\alpha_{\Re(z)}^{(n)}(u)}\hspace{-1pt}\left\| \left( d \Phi  \right)_u(w) \right\|_{L^2(G)} \\
		&\leq \left\|\alpha_{\Re(z)}^{(n)} \right\|_{C^1} e^{\alpha_{\Re(z)}^{(n)}(u)} \left\| \Phi(u) \right\|_{L^2(G)} + Ce^{\alpha_{\Re(z)}^{(n)}(u)}\Phi(u)\\
		&\leq \left(\sum_{i=1}^{n} bK^i \left\|\alpha_{\Re(z)}\right\|_{C^1}\right)e^{\alpha_{\Re(z)}^{(n)}(u)} \left\| \Phi(u) \right\|_{L^2(G)} + Ce^{\alpha_{\Re(z)}^{(n)}(u)}\Phi(u)\\
		&\leq \left( b \left\|\alpha_{\Re(z)}\right\|_{C^1} \frac{K^{n+1} - K}{K - 1} + C \right) e^{\alpha_{\Re(z)}^{(n)}(u)} \|\Phi(u)\|_{L^2(G)}
	\end{align*}
	for all $u \in U$. By making $s$ smaller if necessary, we can ensure that, in addition to \eqref{sineq}, we also have
	\[
		s\delta\left( b \left\|\alpha_{\Re(z)}\right\|_{C^1} \frac{K^{n+1} - K}{K - 1} + C \right) < \delta C = A
	\]
	where $A$ is the uniform constant guaranteed by Lemma \ref{alternative}.  Note that this can be accomplished by a uniform choice of $s$, since $n$ is fixed and $z$ is required to satisfy $\left| \Re(z) - P(\varsigma) \right| < 1$. As a consequence, we see by Lemma \ref{alternative} that
	\begin{equation}
		\frac 1 2 e^{\alpha^{(n)}_{\Re(z)}\left( v_\ell^{(n)}(u') \right)} \Phi \left( v_\ell^{(n)}(u') \right)
\leq e^{\alpha^{(n)}_{\Re(z)}\left( v_\ell^{(n)}(u) \right)} \Phi \left( v_\ell^{(n)}(u) \right) \leq 2e^{\alpha^{(n)}_{\Re(z)}\left( v_\ell^{(n)}(u') \right)} \Phi \left( v_\ell^{(n)}(u') \right)
\label{alternative1}
	\end{equation}
	for all $u, u' \in B_{s\delta}(y)$. Now, suppose that
	\begin{equation}
		\left\|e^{\alpha_z^{(n)}\left(v_1^{(n)}(u')\right)}\rho\left( \Hol^{(n)}\left( v_1^{(n)}(u') \right) \right) \varphi\left( v_1^{(n)}(u') \right) + e^{\alpha_z^{(n)}\left(v_2^{(n)}(u')\right)} \rho\left( \Hol^{(n)}\left( v_2^{(n)}(u') \right) \right) \varphi\left( v_2^{(n)}(u') \right) \right\|_{\mathrlap{L^2(G)}}
		\label{secondoption1}
	\end{equation}
	were bounded above by 
	\begin{equation}
		e^{\alpha_{\Re(z)}^{(n)}\left(v_1^{(n)}(u')\right)} \Phi\left( v_1^{(n)}(u') \right) + \left( 1 - \frac {(\epsilon \delta (1+|\Im(z)|) \|\rho\|
)^2}{512} \right) e^{\alpha_{\Re(z)}^{(n)}\left(v_2^{(n)}(u')\right)} \Phi\left( v_2^{(n)}(u') \right) 
		\label{secondoption2}
	\end{equation}
	for some $u' \in B_{s\delta}(y)$ -- this is in contrast to the bound by \eqref{firstoption2} that we have at $u \in B_{s\delta}(y)$. If we had
	\[
		e^{\alpha_{\Re(z)}^{(n)}\left(v_2^{(n)}(u')\right)} \Phi\left( v_2^{(n)}(u') \right) \leq e^{\alpha_{\Re(z)}^{(n)}\left(v_1^{(n)}(u')\right)} \Phi\left( v_1^{(n)}(u') \right) 
	\]
	then \eqref{secondoption2} is in turn bounded above by
	\[
		\left( 1 - \frac {(\epsilon \delta (1+|\Im(z)|) \|\rho\|
)^2}{512} \right)e^{\alpha_{\Re(z)}^{(n)}\left(v_1^{(n)}(u')\right)} \Phi\left( v_1^{(n)}(u') \right) + e^{\alpha_{\Re(z)}^{(n)}\left(v_2^{(n)}(u')\right)} \Phi\left( v_2^{(n)}(u') \right) 
	\]
	which is consistent with \eqref{firstoption2}. If, on the other hand, we had
	\[
		e^{\alpha_{\Re(z)}^{(n)}\left(v_1^{(n)}(u')\right)} \Phi\left( v_1^{(n)}(u') \right) \leq e^{\alpha_{\Re(z)}^{(n)}\left(v_2^{(n)}(u')\right)} \Phi\left( v_2^{(n)}(u') \right) 
	\]
	then we can invoke \eqref{alternative1} twice to see that
	\begin{align*}
		e^{\alpha_{\Re(z)}^{(n)}\left(v_1^{(n)}(u)\right)} \Phi\left( v_1^{(n)}(u) \right) &\leq 2 e^{\alpha_{\Re(z)}^{(n)}\left(v_1^{(n)}(u')\right)} \Phi\left( v_1^{(n)}(u') \right)\\
		&\leq 2e^{\alpha_{\Re(z)}^{(n)}\left(v_2^{(n)}(u')\right)} \Phi\left( v_2^{(n)}(u') \right)\\
		&\leq 4e^{\alpha_{\Re(z)}^{(n)}\left(v_2^{(n)}(u)\right)} \Phi\left( v_2^{(n)}(u) \right)
	\end{align*}
	for \textit{any} other $u \in B_{s\delta}(y)$. Hence, \eqref{firstoption2} is at most
	\begin{equation}
		e^{\alpha_{\Re(z)}^{(n)}\left(v_1^{(n)}(u)\right)} \Phi\left( v_1^{(n)}(u) \right) + \left( 1 - \frac {(\epsilon \delta (1+|\Im(z)|) \|\rho\|
)^2}{2048} \right) e^{\alpha_{\Re(z)}^{(n)}\left(v_2^{(n)}(u)\right)} \Phi\left( v_2^{(n)}(u) \right) 
	\end{equation}
and so we can make a consistent choice of $v_1^{(n)}, v_2^{(n)}$ on the entire ball $B_{s\delta}(y)$. Note that if \eqref{option1} held instead, everything that followed would have been identical, with $B_{s\delta}(x)$ instead of $B_{s\delta}(y)$.
\end{proof}

The point of Lemma \ref{betalemma} is that, in any ball $B_\delta(x)$ of radius $\delta$, we can always find a uniformly smaller ball $B_{s\delta}(y) \subset B_\delta(x)$ on which $\mathcal L^n_{z,\rho} \varphi$ is \textit{strictly} and \textit{uniformly} bounded away from $\mathcal L^n_{\Re(z),0}\left( \Phi \right)$. This means that we can `bump' $\Phi$ down on any such ball $B_{\frac \delta 4}(y)$ without affecting our inequality. Moreover, using the diametric regularity of the measure $\nu^u$, we can ensure that we are able to do this on a set of uniformly large measure.

\begin{lemma}
	\label{mainlemma}
	Fix $C > 0$, $\varphi \in C^1(U, V^\rho)$ and $\Phi \in \mathcal K_{C}$. If we have
	\begin{align*}
		\left\| \varphi(u) \right\|_{L^2(G)} &< \Phi(u)\\
		\sup_{w \in T_u^1U} \left\| \left( d\varphi \right)_u(w) \right\|_{L^2(G)} &< C \Phi(u)
	\end{align*}
	for all $u \in U$, let $\delta > 0$ be the constant guaranteed by Lemma \ref{alternative} and let $n_0 > 0$, $\epsilon > 0$ and $s < 1$ be the constants guaranteed by Lemma \ref{betalemma}. For a given $\rho$ and $z \in \bb C$ with $|\Re(z) - P(\varsigma)| < 1$, we can find a function $\beta \in C^1(U, \bb R)$ for which we have
	\[
		\left\|\left(\mathcal L^{n_0}_{z,\rho} \varphi \right)(u)\right\|_{L^2(G)} \leq \left(\mathcal L^{n_0}_{\Re(z),0} \left(\beta\Phi\right)\right) (u)
	\]
	for all $u \in U$, as well as
	\[
		\left\| \mathcal L^{n_0}_{P(\varsigma),0} \left( \beta \Phi \right) \right\|_{L^2(\nu^u)} \leq \left( 1 - r \frac{(\epsilon \delta (1+|\Im(z)|) \|\rho\|
)^2}{2048} \nu^u\left( U_{lni} \right) \right) \left\| \Phi \right\|_{L^2(\nu^u)}
	\]
	for some uniform constant $r < 1$. The constant $r$ does not depend on $\rho$, $z$, $\varphi$ or $\Phi$, while the function $\beta$ may depend on all of these.
\end{lemma}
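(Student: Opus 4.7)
The plan is to take $\beta$ to be $1$ minus a sum of small $C^1$ bump functions, each of height $c \coloneqq (\epsilon \delta (1 + |\Im(z)|) \|\rho\|)^2 / 2048$, supported at preimages where Lemma~\ref{betalemma} produces strict contraction. First I would apply a Vitali covering argument to the set of $x \in U_{lni}$ with $B_\delta(x) \subset U_{lni}$, using the doubling property of $\nu^u$, to extract a finite collection of points $x_i$ whose associated balls $B_{s\delta}(y_i)$ and pairs of pasts $(v_{1,i}^{(n_0)}, v_{2,i}^{(n_0)})$ from Lemma~\ref{betalemma} are pairwise disjoint in $U$ and satisfy $\sum_i \nu^u(B_{s\delta}(y_i)) \geq r_1 \nu^u(U_{lni})$ for a uniform $r_1 > 0$. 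Because $\sigma^{n_0}$ is a bijection from $v_{1,i}^{(n_0)}(B_{s\delta}(y_i))$ onto $B_{s\delta}(y_i)$, the preimage sets $A_i \coloneqq v_{1,i}^{(n_0)}(B_{s\delta/2}(y_i))$ are also pairwise disjoint. I would then pick $C^1$ cutoffs $\chi_i \colon U \to [0,1]$ with $\chi_i \equiv 1$ on $A_i$ and $\operatorname{supp}(\chi_i) \subset v_{1,i}^{(n_0)}(B_{s\delta}(y_i))$, and set $\beta \coloneqq 1 - c \sum_i \chi_i$; disjointness of the supports keeps $\beta \in [1-c, 1]$.

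For the pointwise bound, observe that $\chi_i(v_{1,i}^{(n_0)}(u))$ can be nonzero only when $u \in B_{s\delta}(y_i)$. Hence for $u$ outside every $B_{s\delta}(y_i)$, the crude bound $\|\rho(\cdot) \varphi(v)\|_{L^2(G)} \leq \Phi(v)$ on each preimage $v$ gives $\|\mathcal L^{n_0}_{z,\rho} \varphi(u)\|_{L^2(G)} \leq \mathcal L^{n_0}_{\Re(z),0}(\Phi)(u) = \mathcal L^{n_0}_{\Re(z),0}(\beta\Phi)(u)$. For $u \in B_{s\delta}(y_i)$, applying Lemma~\ref{betalemma} to the pair $(v_{1,i}^{(n_0)}, v_{2,i}^{(n_0)})$ and the crude bound to the remaining preimages gives the desired inequality, with the final step requiring only that $\chi_i \leq 1$.

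For the $L^2$ estimate, Cauchy-Schwarz (using $\mathcal L^{n_0}_{P(\varsigma),0} 1 = 1$) combined with the invariance from Remark~\ref{rpflr} gives $\|\mathcal L^{n_0}_{P(\varsigma),0}(\beta\Phi)\|_{L^2(\nu^u)}^2 \leq \|\beta\Phi\|_{L^2(\nu^u)}^2$. Since $\beta \in [0,1]$, the elementary inequality $1 - \beta^2 \geq 1 - \beta = c \sum_i \chi_i$ reduces the problem to securing a uniform lower bound of the form $\sum_i \int \chi_i \Phi^2 \, d\nu^u \geq r_2 \, \nu^u(U_{lni}) \, \|\Phi\|_{L^2(\nu^u)}^2$. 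The log-Lipschitz control on $\Phi$ (with its fixed constant $C$) bounds the global oscillation of $\Phi$ on $U$ in terms of $C$ and $\operatorname{diam}(U)$, so $\int_{A_i} \Phi^2 \, d\nu^u$ is comparable to $\nu^u(A_i) \|\Phi\|_{L^2(\nu^u)}^2$ up to a fixed factor, and the doubling of $\nu^u$ compares $\nu^u(B_{s\delta/2}(y_i))$ to $\nu^u(B_{s\delta}(y_i))$.

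The principal obstacle is then securing a uniform lower bound on the ratio $\nu^u(A_i)/\nu^u(B_{s\delta/2}(y_i))$: by bounded distortion of $\alpha^{(n_0)}_{P(\varsigma)}$ along each inverse branch, this reduces to a pointwise lower bound on $e^{\alpha^{(n_0)}_{P(\varsigma)}(v_{1,i}^{(n_0)}(y_i))}$. Since $n_0$ is fixed by Lemma~\ref{betalemma} (depending only on $C/\|\rho\|$), the number of inverse branches of $\sigma^{n_0}$ is uniformly bounded, so the Jacobians sum to $1$ and average to a uniform positive constant. I would expect this obstacle to be resolved either by refining Theorem~\ref{nli} to produce a pair of pasts whose first element carries at least the average share of Jacobian mass, or by distributing the bumps over the full collection of pairs appearing in the Casimir identity in the proof of Theorem~\ref{nli}, either adjustment only affecting the uniform constants and not the structure of the argument.
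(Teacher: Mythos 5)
Your construction of $\beta$ as one minus a sum of disjointly supported bump functions, pushed forward through the branches $v_{1,i}^{(n_0)}$, matches the paper's, as does the deduction of the pointwise bound from Lemma~\ref{betalemma}. Where you diverge is the $L^2$ estimate: you pass through Jensen's inequality for the Markov operator $\mathcal L^{n_0}_{P(\varsigma),0}$ together with $\nu^u$-invariance to get $\|\mathcal L^{n_0}_{P(\varsigma),0}(\beta\Phi)\|^2_{L^2(\nu^u)} \leq \int \beta^2\Phi^2\,d\nu^u$, then reduce to a lower bound on $\int(1-\beta)\Phi^2\,d\nu^u$. The paper instead splits $U$ into $\bigcup B_{s\delta/2}(y_i)$ and its complement, claims a pointwise multiplicative bound for $\mathcal L^{n_0}_{P(\varsigma),0}(\beta\Phi)$ on the former, and applies Minkowski. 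Your route is a genuinely different decomposition, and it has the merit of not requiring the pointwise contraction claim.

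Two corrections to your accounting of obstacles, one in your favour and one not. The obstacle you flag, a uniform lower bound on $\nu^u(A_i)/\nu^u(B_{s\delta/2}(y_i))$, is not actually an obstacle, and neither workaround you propose is needed: since $n_0$ is a fixed integer and $\alpha_{P(\varsigma)}$ is continuous on a compact set, every depth-$n_0$ inverse branch has Jacobian $e^{\alpha^{(n_0)}_{P(\varsigma)}(\cdot)} \geq e^{n_0\inf\alpha_{P(\varsigma)}} > 0$, and $\nu^u$-conformality gives $\nu^u(A_i) \geq e^{n_0\inf\alpha_{P(\varsigma)}}\,\nu^u(B_{s\delta/2}(y_i))$ with a constant uniform in everything. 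The genuine gap is the one you wave through in the sentence about oscillation. Deducing $\int_{A_i}\Phi^2\,d\nu^u \gtrsim \nu^u(A_i)\,\|\Phi\|^2_{L^2(\nu^u)}$ from $\Phi \in \mathcal K_C$ requires comparing $\Phi$ across all of $U$, which costs a factor of $e^{-2C\operatorname{diam}(U)}$. This factor is not uniform in $C$, and that matters: the lemma asserts $r$ is independent of $\rho$ and $z$, yet in Theorem~\ref{spectralgapmaintheorem} the lemma is applied with $C$ of order $(1+|\Im(z)|)\|\rho\|$, so your $r$ would decay to zero in $\rho$ and $\Im(z)$, and the claimed contraction factor $1 - r(\epsilon\delta(1+|\Im(z)|)\|\rho\|)^2\nu^u(U_{lni})/2048$ would fail to be bounded away from $1$. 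Note how the $\delta\sim 1/C$ relation from Lemma~\ref{alternative} is engineered precisely so that $\delta(1+|\Im(z)|)\|\rho\|$ is uniform; your extra $e^{-2C\operatorname{diam}(U)}$ has no such cancellation built in. This must be repaired before your $L^2$ route closes, for instance by localizing the comparison of $\Phi$ to scale $\delta$ where the oscillation is $e^{O(C\delta)} = e^{O(A)}$, rather than to all of $U$.
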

\begin{proof}
	By the Vitali covering lemma, we can find a finite collection of points $x_1, \ldots, x_k \in U_{lni}$ so that the balls $B_\delta(x_1), \ldots, B_\delta(x_k) \subset U_{lni}$ of radius $\delta$ are pairwise disjoint, while the balls $B_{3\delta}(x_1), \ldots, B_{3\delta}(x_k)$ of radius $3\delta$ cover $U_{lni}$. 
	By Lemma \ref{betalemma}, for each $i$ we can find a ball $B_{s\delta}(y_i) \subset B_{\delta} (x_i)$ and pasts $v_{1,i}^{(n_0)}, v_{2,i}^{(n_0)}$ so that 
	\begin{equation}
		\left\|e^{\alpha_z^{(n_0)}\left(v_{1,i}^{(n_0)}(u)\right)}\hspace{-1.5pt}\rho\left( \Hol^{(n_0)}\hspace{-2pt}\left( v_{1,i}^{(n_0)}(u) \right) \right)\hspace{-1.5pt} \varphi\left( v_{1,i}^{(n_0)}(u) \right)\hspace{-1pt} + \hspace{-1pt} e^{\alpha_z^{(n_0)}\left(v_{2,i}^{(n_0)}(u)\right)}\hspace{-1.5pt} \rho\left( \Hol^{(n_0)} \hspace{-2pt}\left( v_{2,i}^{(n_0)}(u) \right) \right)\hspace{-1.5pt} \varphi\left( v_{2,i}^{(n_0)}(u) \right) \right\|_{L^2(G)}
	\end{equation}
	is bounded above by
	\begin{equation}
		\left( 1 - \frac {(\epsilon \delta (1+|\Im(z)|) \|\rho\|
)^2} {2048} \right)e^{\alpha_{\Re(z)}^{(n_0)}\left(v_{1,i}^{(n_0)}(u)\right)} \Phi\left( v_{1,i}^{(n_0)}(u) \right) + e^{\alpha_{\Re(z)}^{(n_0)}\left(v_{2,i}^{(n_0)}(u)\right)} \Phi\left( v_{2,i}^{(n_0)}(u) \right) 
		\label{bound1}
	\end{equation}
	for all $u \in B_{s\delta}(y_i)$. For each $i$, define a $C^1$ radially-decreasing bump function $\eta_i$ centered at $v_{1,i}^{(n_0)}(y_i)$ by
	\[
		\eta_i\left(v_1^{(n_0)}(u)\right) = \begin{cases}
			\frac {(\epsilon \delta (1+|\Im(z)|) \|\rho\|
)^2} {2048} &\textrm{ if } d(u, y_i) \leq \frac {s\delta} 2\\
			\frac {(\epsilon \delta (1+|\Im(z)|) \|\rho\|
)^2}{2048} \exp\left(1 + \frac{1}{\left( \frac 1 {s\delta} d(u,y_i) - \frac 1 2 \right)^2 - 1}\right) &\textrm{ if } \frac {s\delta} 2 < d(u,y_i) < s\delta\\
			0 &\textrm{ if } s\delta \leq d(u,y_i)
		\end{cases}
	\]
	for all $u \in B_{\delta}(x_i)$. We can smoothly extend $\eta_i$ to all of $U$ by setting $\eta_i = 0$ outside $v_1^{(n_0)}(u)\left( B_{\delta}(x_i) \right)$. To define $\beta$, we simply set
	\[
		\beta(u) \coloneqq 1 - \sum_i \eta_i(u)
	\]
	and by Lemma \ref{betalemma} we clearly have 
	\[
		\left\| \left( \mathcal L^{n_0}_{z,\rho} \varphi \right)(u) \right\|_{L^2(G)} \leq \left( \mathcal L^{n_0}_{\Re(z),0} \left( \beta \Phi \right)  \right)(u)
	\]
	for all $u \in U$. It simply remains to estimate $\left\|\mathcal L^{n_0}_{P(\varsigma),0} \left( \beta \Phi \right) \right\|_{L^2(\nu^u)}$. Note that we have
	\[
		\left(\mathcal L^{n_0}_{P(\varsigma),0} \left( \beta \Phi \right)\right) (u) \leq \left( 1 - \frac {\left( \epsilon \delta (1+|\Im(z)|) \|\rho\|
 \right)^2} {2048} \right) \left(\mathcal L^{n_0}_{P(\varsigma),0} \Phi \right)(u)
	\]
	for all $u \in \bigcup B_{\frac {s\delta} 2}(y_i)$ by construction. Moreover, by the monotonicity of $\mathcal L^{n_0}_{P(\varsigma),0}$, we can bound
	\[
		\left(\mathcal L^{n_0}_{P(\varsigma),0} \left( \beta \Phi \right)\right) (u) \leq \left(\mathcal L^{n_0}_{P(\varsigma),0} \Phi \right)(u)
	\]
	for all $u \in U - \bigcup B_{\frac {s\delta}2}(y_i)$. Taken together, these inequalities mean that we can bound
	\begin{align*}
		\left\|\left(\mathcal L^{n_0}_{P(\varsigma),0} \left( \beta \Phi \right)\right)   \right\|_{L^2(\nu^u)}
	\end{align*}
		{from above by}
		\begin{align*}
			\left( \int_{\bigcup B_{\frac {s\delta} 2}(y_i)} \hspace{-3pt}\left( 1 - \frac {\left( \epsilon \delta (1+|\Im(z)|) \|\rho\| \right)^2} {2048} \right)^2 \hspace{-4pt} \left(\left(\mathcal L^{n_0}_{P(\varsigma),0} \Phi \right)\hspace{-1.5pt}(u)\right)^2 d\nu^u  \right)^{\frac 1 2} \hspace{-7pt} +  \left(\int_{U - \bigcup B_{\frac {s\delta} 2}(y_i)} \hspace{-3pt}\left(\left(\mathcal L^{n_0}_{P(\varsigma),0} \Phi \right)\hspace{-1.5pt}(u)\right)^2 d\nu^u \right)^{\frac 1 2}
	\end{align*}
	for our particular choice of $\rho$ and $z$.
	Since the operator $\mathcal L^{n_0}_{P(\varsigma),0}$ preserves the measure $\nu^u$ (by our renormalization), this is exactly
	\begin{align*}
	\left\| \Phi \right\|_{L^2(\nu^u)}\left( 1 - \frac{(\epsilon \delta (1+|\Im(z)|) \|\rho\|
)^2} {2048} \nu^u\left( \bigcup B_{\frac {s\delta}2}(y_i)\right) \right)
	\end{align*}
	By the diametric regularity of the measure $\nu^u$, there is a uniform constant $r < 1$ for which 
	\[
		\nu^u \left( B_{\frac {s\delta}2}(y_i) \right) \geq r \nu^u \left( B_{3\delta} (y_i) \right)
	\]
	for all $i$. Hence, we have
	\begin{align*}
		\nu^u \left( \bigcup B_{\frac {s\delta}2}(y_i) \right) \geq r \nu^u \left( \bigcup B_{3\delta}(y_i) \right) \geq r \nu^u \left( U_{lni} \right)
	\end{align*}
	completing the proof.
\end{proof}

It is important to recognize that many of the estimates so far do in fact depend on $\rho$, $\Im(z)$ or $C$ -- this will be problematic for the spectral bounds we want to obtain. To isolate some of these dependencies, we will restrict our attention to control functions $\Phi \in \mathcal K_{C (1+|\Im(z)|) \|\rho\|
}$, where we hope to be able to make a uniform choice of an appropriate $C$. In particular, we want to find a $C$ so that $\mathcal K_{C (1+|\Im(z)|) \|\rho\|}$ is invariant under $\mathcal L^n_{z,\rho}$, at least for $z$ with $\Re(z)$ sufficently close to $P(\varsigma)$.

\begin{proposition}
	There is a uniform choice of constant $C > 0$ so that, for all $\varphi \in C^1(U, V^\rho)$ and $\Phi \in \mathcal K_{C(1+|\Im(z)|\|\rho\|}$, if we have
	\begin{align*}
		\left\| \varphi(u) \right\|_{L^2(G)} &\leq \Phi(u)\\
		\sup_{w \in T^1_uU} \left\| (d\varphi)_u(w) \right\|_{L^2(G)} &\leq C(1+|\Im(z)|)\Phi(u)
	\end{align*}
	for all $u \in U$, then we can bound
	\[
		\sup_{w \in T^1_uU} \left\| \left(d\left( \mathcal L^n_{z,\rho}\varphi \right)\right)_u(w) \right\|_{L^2(G)} \leq C (1+|\Im(z)|)\|\rho\| \left(\mathcal L_{\Re(z),0}^n \Phi\right)(u)
	\]
	for all $u \in U$, and all $n > 0$.
	\label{uniformc}
\end{proposition}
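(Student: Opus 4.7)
The plan is to differentiate $\mathcal L^n_{z,\rho}\varphi$ term-by-term over inverse branches and bound each resulting contribution using the uniform expansion of $\sigma$. Fix $u \in U$ and write
\[
	(\mathcal L^n_{z,\rho}\varphi)(u) = \sum_j e^{\alpha_z^{(n)}(v_j^{(n)}(u))}\,\rho\!\left(\Hol^{(n)}(v_j^{(n)}(u))\right)\varphi(v_j^{(n)}(u)),
\]
where $\{v_j^{(n)}\}$ runs over the local inverse branches of $\sigma^n$. Differentiating in a unit direction $w \in T^1_u U$ yields a sum, for each branch, of three contributions arising from the product rule: a scalar factor $d(\alpha_z^{(n)} \circ v_j^{(n)})_u(w)$, a group-valued factor $d(\rho \circ \Hol^{(n)} \circ v_j^{(n)})_u(w)$ applied to $\varphi$, and a vector-valued factor $(d\varphi)_{v_j^{(n)}(u)}(dv_j^{(n)}(w))$.

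For the first contribution, decompose $\alpha_z = \alpha_{\Re(z)} - i\,\Im(z)\,\tau$, so $\|d\alpha_z\|_{C^0} \leq A_0(1+|\Im(z)|)$ for a constant $A_0$ depending only on $\varsigma$, $\tau$ and $\varphi_\varsigma$. Since $\|dv_j^{(i)}\|_{\op} \leq (f\kappa^i)^{-1}$, a telescoping argument identical to the ones used in the proof of Lemma \ref{betalemma} gives $|d(\alpha_z^{(n)} \circ v_j^{(n)})_u(w)| \leq B_0(1+|\Im(z)|)$ with $B_0 = A_0/(f(\kappa-1))$, uniformly in $n, j, u$. For the second contribution, recall that $\rho$ acts by $L^2(G)$-isometries and $\|(d\rho)_g(X)\|_{L^2(G)} \leq \|\rho\| \cdot \|X\|_{\mathfrak{g}}$; the same geometric-series bound applied to $\Hol^{(n)} \circ v_j^{(n)}$ produces a uniform bound $B_1 = \|\Hol\|_{C^1}/(f(\kappa-1))$, so this term contributes at most $\|\rho\| B_1$. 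For the third, $\|dv_j^{(n)}\|_{\op} \leq (f\kappa^n)^{-1}$ combined with the hypothesis on $d\varphi$ yields the estimate $C(1+|\Im(z)|)(f\kappa^n)^{-1} \Phi(v_j^{(n)}(u))$. Since $|e^{\alpha_z^{(n)}}| = e^{\alpha_{\Re(z)}^{(n)}}$ in each case, summing over branches gives
\[
	\sup_{w} \left\|d(\mathcal L^n_{z,\rho}\varphi)_u(w)\right\|_{L^2(G)} \leq \left[B_0(1+|\Im(z)|) + B_1\|\rho\| + \frac{C(1+|\Im(z)|)}{f\kappa^n}\right] (\mathcal L^n_{\Re(z),0}\Phi)(u).
\]

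It then remains to choose $C$ large enough that the bracketed quantity is bounded by $C(1+|\Im(z)|)\|\rho\|$ for every nontrivial irreducible $\rho$, every $z$ with $|\Re(z) - P(\varsigma)| < 1$, and every $n \geq 1$. Dividing by $(1+|\Im(z)|)\|\rho\|$, the required inequality reduces to
\[
	\frac{B_0}{\|\rho\|} + \frac{B_1}{1+|\Im(z)|} + \frac{C}{f\kappa^n\|\rho\|} \leq C.
\]
The main obstacle here is the first summand: it is not automatically multiplied by $\|\rho\|$, so we must appeal to a uniform lower bound $\|\rho\| \geq c_0 > 0$ over all nontrivial irreducibles, which follows from the fact that the highest weights of nontrivial irreducible representations of the compact connected group $G$ form a lattice bounded away from the origin (cf.\ Proposition \ref{normexists}). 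With that in hand, the requirement becomes $C(1 - (f\kappa)^{-1}) \geq B_0/c_0 + B_1$, which holds for any sufficiently large uniform $C$ and gives the conclusion simultaneously for all $n \geq 1$.
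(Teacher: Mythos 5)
Your proof takes essentially the same route as the paper: differentiate $\mathcal L^n_{z,\rho}\varphi$ term-by-term over inverse branches, split into the three contributions from the product rule (derivative of $\alpha_z^{(n)}$, derivative of $\rho\circ\Hol^{(n)}$, derivative of $\varphi$), bound each with the geometric-series estimate $\sum_j (f\kappa^j)^{-1} \leq (f(\kappa-1))^{-1}$, and then choose $C$ large. You have also correctly noticed something the paper's write-up glosses over: the first contribution yields a term proportional to $1+|\Im(z)|$ but not to $\|\rho\|$, so the closing inequality genuinely needs a uniform lower bound $\|\rho\| \geq c_0 > 0$ over the nontrivial irreducibles of $G$. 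The paper simply asserts that a suitable $C$ ``can clearly'' be chosen; your explicit appeal to discreteness of the highest-weight lattice is the step that makes this rigorous, and it is also the reason the proposition is applied only for nontrivial $\rho$ in Theorem \ref{spectralgapmaintheorem}.

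One small bookkeeping point: the statement of Proposition \ref{uniformc} appears to have dropped a factor of $\|\rho\|$ from the hypothesis on $\sup_w\|(d\varphi)_u(w)\|$ (compare with how the hypothesis is verified in the proof of Theorem \ref{spectralgapmaintheorem}, where the factor is present). You followed the statement as written, which produces $C/(f\kappa^n\|\rho\|)$ in the displayed inequality, but your final line tacitly reverts to $C/(f\kappa^n)$; with the corrected hypothesis these agree, and the resulting requirement $C(1-(f\kappa)^{-1}) \geq B_0/c_0 + B_1$ is exactly right. So aside from this typographical wrinkle inherited from the statement, your argument is sound and slightly more careful than the published one.
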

\begin{proof}
	Fix $u \in U$. By definition, the transfer operator can be expressed as the sum
	\begin{equation}
		\left( \mathcal L^n_{z,\rho}\varphi \right)(u) = \sum_{i} e^{\alpha_{z}^{(n)}\left(v_i^{(n)}(u)\right)} \rho\left( \Hol^{(n)}\left(v_i^{(n)}(u)\right) \right) \varphi\left( v_i^{(n)}(u) \right)
		\label{tod2}
	\end{equation}
	over pasts $v_i^{(n)}$. 
	We need to control
	\begin{equation}
		\sup_{w \in T^1_uU} \left\| d\left( \mathcal L_{z,\rho}^n(\varphi) \right)_u(w)\right\|_{L^2(G)}
		\label{to1}
	\end{equation}
	which we will accomplish by differentiating \eqref{tod2} term-by-term. For a fixed $i$, the derivative 
	\begin{equation}
		d\left(e^{\alpha_{z}^{(n)}\left(v_i^{(n)}(u)\right)} \rho\left( \Hol^{(n)}\left(v_i^{(n)}(u)\right) \right)  \varphi\left( v_i^{(n)}(u) \right)\right)_u(w)
		\label{todd}
	\end{equation}
	can be bounded by the sum
	\begin{equation}
		\left\|e^{\alpha_{z}^{(n)}\left( v_i^{(n)}(u) \right)}\left(\rho\left( \Hol^{(n)}\left( v_i^{(n)}(u) \right) \right)\varphi\left( v_i^{(n)}(u) \right) \right) \left( d\left(  \alpha_{z}^{(n)} \circ v_i^{(n)}\right) \right)_u(w)\right\|_{L^2(G)}
		\label{term1}
	\end{equation}
	\begin{equation}
		+ \left\|e^{\alpha_{z}^{(n)}\left( v_i^{(n)}(u) \right)}\left(\left(d\left( \rho\left( \Hol^{(n)} \circ v_i^{(n)} \right)\right)\right)_u(w)\right) \varphi\left( v_i^{(n)}(u) \right)\right\|_{L^2(G)}
	\label{term2}
	\end{equation}
	\begin{equation}
		+ \left\|e^{\alpha_{z}^{(n)}\left( v_i^{(n)}(u) \right)}\rho\left( \Hol^{(n)}\left( v_i^{(n)}(u) \right) \right) \left(d \left(\varphi \circ v_i^{(n)} \right) \right)_u(w) \right\|_{L^2(G)}
		\label{term3}
	\end{equation}
	for all $w \in T^1_uU$. We will bound each of these terms individually, beginning with \eqref{term1}. Observe that
	\begin{align*}
	\left( d\left( \alpha_z^{(n)} \circ v_i^{(n)} \right) \right)_u(w) &= \sum_{j=1}^n \left| \left(d\left(\alpha_z \circ v_i^{(j)}\right)\right)_u(w) \right|\\
	&\leq \sum_{j=1}^n \frac{\|\alpha_z\|_{\mathrlap{C^1}}}{f\kappa^j}\\
	&\leq \frac{\|\alpha_z\|_{\mathrlap{C^1}}}{f(\kappa - 1)}
	\end{align*}
	and so \eqref{term1} is at most 
	\begin{align*}
		\frac{\|\alpha_z\|_{\mathrlap{C^1}}}{f(\kappa - 1)}e^{\alpha_{\Re(z)}^{(n)}\left( v_i^{(n)}(u) \right)} \left\|\varphi\left( v_i^{(n)}(u) \right) \right\|_{L^2(G)}
	\end{align*}
	for all $u \in U$ and $w \in T^1_uU$. Exactly the same calculations show us that
	\begin{align*}
		\left\|\left( d\left( \Hol^{(n)} \circ v_i^{(n)} \right) \right)_u(w) \right\|_{T^1G} &\leq \sum_{j=1}^n \left\| \left( d\left( \Hol \circ v_i^{(j)} \right) \right)_u(w) \right\|_{T^1G}\\
		&\leq \sum_{j=1}^n \frac{\left\| \Hol \right\|_{\mathrlap{C^1}}}{f\kappa^j}\\
		&\leq \frac{\left\|\Hol \right\|_{\mathrlap{C^1}}}{f(\kappa-1)}
	\end{align*}
	and hence that \eqref{term2} is at most
	\[
		\left( \|\rho\| \frac{\|\Hol\|_{\mathrlap{C^1}}}{f(\kappa - 1)} \right)e^{\alpha_{\Re(z)}\left( v_i^{(n)}(u) \right)} \left\|\varphi\left( v_i^{(n)}(u) \right)\right\|_{L^2(G)}
	\]
	by the definition of $\|\rho\|$ and the chain rule. Finally, we can bound \eqref{term3} by
	\[
		\frac{C(1+|\Im(z)|)\|\rho\|}{f\kappa^n} e^{\alpha_{\Re(z)}\left( v_i^{(n)}(u) \right)} \Phi\left( v_i^{(n)}(u) \right)
	\]
	by hypothesis. Combining all of these with the pointwise bound on the norm of $\varphi$, we see that so long as we have
	\begin{equation}
		\frac{\|\alpha_z\|_{\mathrlap{C^1}}}{f(\kappa - 1)} + \|\rho\| \frac{\|\Hol\|_{\mathrlap{C^1}}}{f(\kappa - 1)} + \frac{C(1+|\Im(z)|) \|\rho\|
}{f\kappa^n} < C (1+|\Im(z)|) \|\rho\|
		\label{uniformineq}
	\end{equation}
	we obtain the bounds desired. Note that $\|\Hol\|_{C^1}$ is a uniform constant and $\|\alpha_z\|_{C^1}$ is at most $\|\alpha_{\Re(z)}\|_{C^1} (1 + |\Im(z)|)$. Since $\|\alpha_{\Re(z)}\|_{C^1}$ is uniformly bounded when $\Re(z)$ is confined to the bounded interval $|\Re(z) - P(\varsigma)| < 1$, we can clearly choose a $C > 0$ so that \eqref{uniformineq} holds for all $n > 0$.
\end{proof}

\begin{theorem}
	Fix a nontrivial isotypic representation $\rho > 0$ and $z \in \bb C$ with $|\Re(z) - P(\varsigma)| < 1$. There are uniform constants $D > 0$ and $r_0 < 1$ so that
	\[
	  \left\|\mathcal L^n_{z,\rho} \varphi \right\|_{L^2(\nu^u)} \leq D r_0^n \|\varphi\|_{C^1}
	\]
	for all $\varphi \in C^1(U, V^\rho)$. Neither $D$ nor $r_0$ depends on $\rho$, $\varphi$ or $z$.
	\label{spectralgapmaintheorem}
\end{theorem}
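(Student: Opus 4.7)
The plan is to iterate Lemma \ref{mainlemma} to produce a sequence of control functions $\Phi_k$ whose $L^2(\nu^u)$-norms decay geometrically at a uniform rate, yielding the bound for $n = kn_0$ and then extending to arbitrary $n$ by absorbing a finite remainder.

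The key observation that enables \emph{uniform} bounds is the following: Lemma \ref{alternative} produces $\delta$ satisfying $\delta C = A$ for a uniform constant $A$, where in our setting $C = C_0(1+|\Im(z)|)\|\rho\|$. Consequently, the quantity $\epsilon\delta(1+|\Im(z)|)\|\rho\|$ appearing in the contraction factor of Lemma \ref{mainlemma} is exactly $\epsilon A / C_0$, which is independent of both $\rho$ and $z$. Hence the factor in Lemma \ref{mainlemma} reduces to a uniform $1 - \eta_0 < 1$.

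Concretely, I would set $\Phi_0 \equiv D_0 \|\varphi\|_{C^1}$ with $D_0$ chosen large enough (uniformly in $\rho$ and $z$, using the lower bound on $\|\rho\|$ for nontrivial irreducible representations of $G$) so that $\|\varphi\|_{L^2(G)} \leq \Phi_0$ and $\|d\varphi\|_{L^2(G)} \leq C(1+|\Im(z)|)\|\rho\|\Phi_0$ pointwise; a constant is trivially in $\mathcal K_{C(1+|\Im(z)|)\|\rho\|}$. Given $\varphi_k := \mathcal L^{kn_0}_{z,\rho}\varphi$ controlled by $\Phi_k$ in this sense, I would apply Lemma \ref{mainlemma} to get $\beta_k$, then define $\Phi_{k+1}$ to be a carefully chosen $C^1$ function that (i) pointwise dominates $\|\varphi_{k+1}\|_{L^2(G)}$, (ii) pointwise dominates $(C(1+|\Im(z)|)\|\rho\|)^{-1}\|d\varphi_{k+1}\|_{L^2(G)}$ (bounded via Proposition \ref{uniformc}), (iii) lies in $\mathcal K_{C(1+|\Im(z)|)\|\rho\|}$, and (iv) satisfies $\|\Phi_{k+1}\|_{L^2(\nu^u)} \leq (1-\eta_0')\|\Phi_k\|_{L^2(\nu^u)}$ for a uniform $\eta_0' > 0$. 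The conversion between $L^2$-norms under $\mathcal L^{n_0}_{\Re(z),0}$ and under the measure-preserving $\mathcal L^{n_0}_{P(\varsigma),0}$ costs only a bounded multiplicative factor since $|\Re(z)-P(\varsigma)| < 1$ and $n_0$ is fixed, so the contraction from Lemma \ref{mainlemma} transfers.

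The hard part will be the construction of $\Phi_{k+1}$ satisfying (i)--(iv) simultaneously. The obstruction is that Proposition \ref{uniformc} bounds the derivative of $\varphi_{k+1}$ only by $C(1+|\Im(z)|)\|\rho\| \mathcal L^{n_0}_{\Re(z),0}\Phi_k$, which strictly exceeds the natural pointwise bound $\mathcal L^{n_0}_{\Re(z),0}(\beta_k\Phi_k)$ on the bump regions where $\beta_k < 1$. I would resolve this by allowing $\Phi_{k+1}$ to be bounded below by a fixed fraction of $\mathcal L^{n_0}_{\Re(z),0}\Phi_k$ on those regions, sacrificing a uniform part of the $\beta_k$-gain but preserving both the derivative bound and cone membership via the standard Ruelle-type calculation exploiting the expansion rate $\kappa^{n_0}$ of $\sigma^{n_0}$. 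Iterating produces $\|\mathcal L^{kn_0}_{z,\rho}\varphi\|_{L^2(\nu^u)} \leq (1-\eta_0')^k D_0 \|\varphi\|_{C^1}$. Setting $r_0 := (1-\eta_0')^{1/n_0}$ and absorbing the remainder $n \bmod n_0$ via the uniform operator-norm bound on $\mathcal L^j_{z,\rho}$ for $0 \leq j < n_0$ (which follows from the uniform boundedness of $\alpha_z^{(j)}$ on the prescribed parameter range) completes the argument.
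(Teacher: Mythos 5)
Your proposal follows essentially the same strategy as the paper's proof: take $\Phi_0$ to be the constant $\|\varphi\|_{C^1}$, apply Lemma \ref{mainlemma} at each step of size $n_0$ to produce $\beta_k$ and $\Phi_{k+1} := \mathcal L^{n_0}_{\Re(z),0}(\beta_k\Phi_k)$, verify the cone condition $\Phi_{k+1} \in \mathcal K_{C_0(1+|\Im(z)|)\|\rho\|}$ using Proposition \ref{uniformc}, iterate to get $\|\Phi_k\|_{L^2(\nu^u)} \leq r_0^k\|\varphi\|_{C^1}$, and then absorb the remainder $n \bmod n_0$ by a uniform bound on $\|\mathcal L^j_{z,\rho}\|$ for $j < n_0$. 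Your observation that the relation $\delta C = A$ from Lemma \ref{alternative} makes $\epsilon\delta(1+|\Im(z)|)\|\rho\|$ a genuinely uniform constant is exactly the mechanism the paper uses to decouple the contraction factor from $\rho$ and $z$.

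The one place where you go beyond the paper's exposition is your explicit concern about step (ii): Proposition \ref{uniformc} controls $\|d\varphi_{k+1}\|$ by $C(1+|\Im(z)|)\|\rho\|\,\mathcal L^{n_0}_{\Re(z),0}\Phi_k$, whereas the control function you want to continue with is $\Phi_{k+1} = \mathcal L^{n_0}_{\Re(z),0}(\beta_k\Phi_k)$, which is strictly smaller wherever $\beta_k < 1$. The paper's proof states the induction step without flagging this mismatch. Your proposed fix (allowing $\Phi_{k+1}$ to be bounded below by a fixed fraction of $\mathcal L^{n_0}_{\Re(z),0}\Phi_k$, giving back a small part of the $\beta_k$-gain) is one valid way to close it. A cleaner route, closer to what the paper implicitly relies on, is to note that the bump functions $\eta_i$ in Lemma \ref{mainlemma} have disjoint supports and uniform height $(\epsilon\delta(1+|\Im(z)|)\|\rho\|)^2/2048 = (\epsilon A/C_0)^2/2048 =: \eta$, so $\beta_k \geq 1-\eta > 0$ pointwise and hence $\mathcal L^{n_0}_{\Re(z),0}\Phi_k \leq (1-\eta)^{-1}\Phi_{k+1}$; the factor $(1-\eta)^{-1}$ is then absorbed into the cone constant. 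That this absorption does not compound under iteration uses the fact that, in the estimate \eqref{uniformineq} underlying Proposition \ref{uniformc}, the input cone constant enters the output only through the term $C(1+|\Im(z)|)\|\rho\|/(f\kappa^{n_0})$, which is damped by the expansion rate, so the recursion for the cone constant is contracting and stabilizes at a uniform value. This is exactly the ``standard Ruelle-type calculation exploiting the expansion rate $\kappa^{n_0}$'' you allude to, and it would be worth spelling out. With that made explicit, your argument is complete and matches the paper's.
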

\begin{proof}
	Fix $C > 0$, $N > 0$ as in Lemma \ref{uniformc} and $n_0 > N$ as in Lemma \ref{mainlemma}. We begin by setting
	\begin{align*}
		\varphi_0(u) &\coloneqq \varphi(u)\\
		\Phi_0(u) &\coloneqq \|\varphi\|_{C^1}
	\end{align*}
	for which we clearly have $\Phi_0 \in \mathcal K_{C (1 + |\Im(z)|) \|\rho\|}$ as well as the bounds
	\begin{align*}
		\left\| \varphi_0(u) \right\|_{L^2(G)} &\leq \Phi_0(u)\\
		\sup_{w \in T^1_uU} \left\| \left(d\varphi_0 \right)_u (w) \right\|_{L^2(G)} &\leq C (1+|\Im(z)|) \|\rho\| \Phi_0(u)
	\end{align*}
	assuming that $C > 0$ is large enough so that $C\|\rho\| > 1$ for all $\rho$. By Lemma \ref{mainlemma}, we can find a function $\beta_0 \in C^1(U, \bb R)$ for which we have
	\[
		\left\| \left( \mathcal L^{n_0}_{z,\rho} \varphi_0 \right)(u) \right\|_{L^2(G)} \leq \left( \mathcal L^{n_0}_{\Re(z),0} \left( \beta_0 \Phi_0 \right) \right)(u)
	\]
	and
	\[
		\left\| \mathcal L_{\Re(z),0}^{n_0} \left( \beta_0 \Phi_0 \right) \right\|_{L^2(\nu^u)} \leq r_0 \| \Phi_0\|_{L^2(\nu^u)}
	\]
	for a uniform choice of $n_0 > 0$. Moreover, $r_0$ and $\|\beta_0\|_{C^1}$ can be made uniform in $\rho, \varphi$ and $z$ by choosing $\delta$ so that $C(1+|\Im(z)|)\|\rho\| \cdot \delta$ is constant, as in Lemma \ref{alternative}. 

	We want to iterate this, for which it will be crucial that we can find a uniform $C_0$ so that we have $\beta_0 \Phi_0 \in \mathcal K_{C_0(1+|\Im(z)|)\|\rho\|}$ for all $\Phi_0 \in \mathcal K_{C_0(1+|\Im(z)|)\|\rho\|}$. For any given $\Phi_0 \in \mathcal K_{C(1+|\Im(z)|)\|\rho\|}$, we have
	\begin{align*}
	  \sup_{w \in T^1_uU} |(d(\Phi_0\beta_0)_u (w) | &\leq \beta_0(u) \left(\sup_{w \in T^1_uU} |(d\Phi_0)_u(w)| \right) + \Phi_0(u) \left( \sup_{w \in T^1_uU} |(d\beta_0)_u(w)| \right)\\
		  &\leq \sup_{w\in T^1_uU} \Phi_0(u) \left( C(1+ |\Im(z)|)\|\rho\| \beta_0(u) + (d\beta_0)_u(w) \right)
	\end{align*}
	for all $u \in U$. From the construction of $\beta_0$, and by our earlier remarks on our choice of $\delta$, it is clear that we can choose a uniform $C_0$ large enough so that
	\[
		\sup_{w\in T^1_uU} C_0(1+ |\Im(z)|)\|\rho\| \beta_0(u) + (d\beta_0)_u(w) \leq C_0 \beta_0(u)
	\]
	for any choice of $\beta_0$ as in Lemma \ref{mainlemma}. We then clearly have $\beta_0 \Phi_0 \in \mathcal K_{C_0 (1+|\Im(z)|) \|\rho\|}$, and hence by Lemma \ref{uniformc} we get
	\[
	  \sup_{w \in T^1_uU} \left\| (d(\mathcal L^{n_0}_{z,\rho}\varphi))_u(w) \right\|_{L^2(G)} \leq C_0 (1+|\Im(z)|) \|\rho\| \left( \mathcal L^n_{\Re(z),0} \Phi_0 \right)(u)
	\]
	for all $u \in U$. 
	
	Now, we can repeat what we have done so far and inductively choose $\beta_{i-1}$ as in Lemma \ref{mainlemma}, setting
	\begin{align*}
	  \varphi_i(u) &\coloneqq \left(\mathcal L_{z,\rho}^{n_0} \varphi_{i-1} \right)(u)\\
	  \Phi_i(u) &\coloneqq \left( \mathcal L_{\Re(z),0}^{n_0} (\beta_{i-1} \Phi_{i-1} )\right)(u)
	\end{align*}
	for $i \geq 1$. Note that we have just shown that $\Phi_1 \in \mathcal K_{C_0(1 + |\Im(z)|)\|\rho\|}$ and that we have
	\begin{align*}
		\left\| \varphi_1(u) \right\|_{L^2(G)} &\leq \Phi_1(u)\\
		\sup_{w \in T^1_uU} \left\| \left(d\varphi_1 \right)_u (w) \right\|_{L^2(G)} &\leq C_0 (1 + |\Im(z)|) \|\rho\| \Phi_1(u)
	\end{align*}
	for all $u \in U$. As before, we get
	\begin{align*}
		\left\| \varphi_i(u) \right\|_{L^2(G)} &\leq \Phi_i(u)\\
		\sup_{w \in T^1_uU} \left\| \left(d\varphi_i \right)_u (w) \right\|_{L^2(G)} &\leq C_0 (1 + |\Im(z)|) \|\rho\| \Phi_i(u)
	\end{align*}
	inductively for all $i \geq 1$, and moreover
	\[
	  \left\| \Phi_i \right\|_{L^2(\nu^u)} \leq r_0 \| \Phi_{i-1}\|_{L^2(\nu^u)}
	\]
	by construction. Chaining these inequalities together, we have
	\[
		\left\|\mathcal L^{i \cdot n_0}_{z,\rho} \varphi \right\|_{L^2(\nu^u)} \leq r_0^i \|\varphi \|_{C^1}
	\]
	which is almost what we need. To conclude, observe that we have
	\begin{equation}
		\left\|\left( \mathcal L^{i \cdot n_0 + k}_{z,\rho} \varphi \right)(u) \right\|_{L^2(\nu^u)} \leq r_0^i \|\varphi \|_{C^1} \| \mathcal L_{z,\rho}^k\|_{L^2(G)}
		\label{almost}
	\end{equation}
	where $\| \mathcal L_{z,\rho}^k \|_{L^2(G)}$ denotes the operator norm of $\mathcal L_{z,\rho}^k$. If $k < n_0$ and $|\Re(z) - P(\varsigma)| \, < 1$, then we can find a uniform bound $D > 0$ so that $\|\mathcal L_{z,\rho}^k \|_{L^2(G)} \leq D$, as desired.
\end{proof}

The differentiability of the potential $\varsigma$ was essential to much of what we have done so far in this section; to extend our results to the case when $\varsigma$ is only H\"older, however, is a relatively straightforward approximation argument, identical to the one given in \cite{dolgopyatmixing}. We sketch this below.

\begin{corollary}
	With notation as in Theorem \ref{spectralgapmaintheorem}, we have
	\[
		\|\mathcal L^n_{\Re(z),\rho} \varphi \|_{L^2(\nu^u)} \leq Dr_0^n\|\varphi\|_{C^1}
	\]
	when the potential $\varsigma$ is only H\"older continuous.
	\label{holderpotentialapprox}
\end{corollary}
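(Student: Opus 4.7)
My plan is to reduce the H\"older-potential case to the $C^1$-potential case by mollification, applying Theorem \ref{spectralgapmaintheorem} to smooth approximations $\varsigma_\eta$ of $\varsigma$ and transferring the resulting spectral bound back by tracking how the constants depend on the regularity parameter $\eta$.

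First, I would fix a family of standard mollifications $\varsigma_\eta \in C^\infty(N, \bb R)$ satisfying
\[
  \|\varsigma - \varsigma_\eta\|_{C^0} \leq C \eta^\alpha \|\varsigma\|_{C^\alpha}, \qquad \|\varsigma_\eta\|_{C^1} \leq C \eta^{\alpha-1} \|\varsigma\|_{C^\alpha}
\]
for $\eta \in (0,1)$. Standard perturbation results in thermodynamic formalism ensure that the eigenfunction $\varphi_{\varsigma_\eta}$, the pressure $P(\varsigma_\eta)$, and the density of the equilibrium measure $\nu^u_\eta$ relative to $\nu^u$ all vary H\"older-continuously in $\|\varsigma - \varsigma_\eta\|_{C^0}$. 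This allows me to compare the renormalized twisted operators $\mathcal L^{(\eta)}_{z,\rho}$ and $\mathcal L_{z,\rho}$ pointwise as
\[
  \bigl(\mathcal L^n_{z,\rho}\varphi\bigr)(u) = e^{E_n(u)} \bigl(\mathcal L^{(\eta), n}_{z, \rho}\varphi\bigr)(u),
\]
where $|E_n(u)| \leq C' n \eta^\alpha$ and $C'$ depends only on $\|\varsigma\|_{C^\alpha}$, $\|\tau\|_{C^0}$, and the usual geometric data.

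Next, I would apply Theorem \ref{spectralgapmaintheorem} to $\varsigma_\eta$, obtaining
\[
  \|\mathcal L^{(\eta), n}_{z,\rho}\varphi\|_{L^2(\nu^u_\eta)} \leq D_\eta r_\eta^n \|\varphi\|_{C^1}
\]
with constants $D_\eta$ and $r_\eta$ whose dependence on $\eta$ can be read off from Lemma \ref{alternative}, Lemma \ref{betalemma}, Lemma \ref{mainlemma}, and Proposition \ref{uniformc}. Crucially, the constant $\delta$ from Lemma \ref{alternative} scales inversely in $\|\alpha_z\|_{C^1}$, hence inversely in $\|\varsigma_\eta\|_{C^1}$, so the per-iteration contraction $1 - r_\eta$ degrades as an explicit polynomial expression in $\eta^{1-\alpha}$. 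Combining the spectral bound with the pointwise comparison, and switching the ambient measure from $\nu^u_\eta$ to $\nu^u$ via the absolute continuity of the equilibrium measures, would produce a bound of the form
\[
  \|\mathcal L^n_{z,\rho}\varphi\|_{L^2(\nu^u)} \leq D'_\eta \bigl( e^{C'\eta^\alpha} r_\eta \bigr)^n \|\varphi\|_{C^1}.
\]
Choosing $\eta$ to be a sufficiently small fixed constant, depending only on $\varsigma$ and $\alpha$, makes $e^{C'\eta^\alpha} r_\eta < 1$ and yields the claimed bound with uniform constants $D$ and $r_0 < 1$.

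The main obstacle is ensuring that the contraction gap $1 - r_\eta$ actually dominates the approximation error $C'\eta^\alpha$ after all factors have been tallied; this is precisely the balancing Dolgopyat carries out in \cite{dolgopyatmixing}. It amounts to a careful accounting of how every constant from \S\ref{transopsec} depends on $\|\varsigma_\eta\|_{C^1}$, together with the standard perturbation estimates for the eigenfunction and pressure. No genuinely new ideas enter beyond this bookkeeping, which is why the author describes the extension as a ``straightforward approximation argument''.
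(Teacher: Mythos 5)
Your plan shares the mollification idea with the paper's argument, but it diverges on the crucial point and the divergence breaks the proof. You fix a single mollification scale $\eta$ and then try to balance the per-iterate approximation error $\sim\eta^\alpha$ against the degraded contraction gap. Tracking the constants through \S\ref{transopsec}: the mollified potential has $\|\varsigma_\eta\|_{C^1}\sim\eta^{\alpha-1}$, which forces the constant $C$ of Proposition \ref{uniformc} to scale like $\eta^{\alpha-1}$ (since it must dominate $\|\alpha_z\|_{C^1}$ uniformly in $\Im(z)$, in particular at $\Im(z)=0$), which forces $\delta\sim\eta^{1-\alpha}$ via Lemma \ref{alternative}, which makes the spectral gap from Lemma \ref{mainlemma} scale like $\delta^2\sim\eta^{2(1-\alpha)}$. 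You then want $e^{C'\eta^\alpha}r_\eta<1$, i.e.\ roughly $\eta^\alpha\lesssim\eta^{2(1-\alpha)}$ as $\eta\to 0$, which holds only when $\alpha>2/3$; for small $\alpha$ the error always wins, and ``choosing $\eta$ sufficiently small'' makes the product tend to $1$ from above, not below. (The dependence of $n_0$ on $C/\|\rho\|$ in Lemma \ref{betalemma} worsens this by an additional logarithmic factor.) So with a fixed $\eta$ there is in general no sweet spot.

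The paper avoids the balancing entirely by tying the mollification scale to $\Im(z)$ — effectively $\eta=|\Im(z)|^{-1/2}$, giving $\|\varsigma^{(\Im(z))}\|_{C^1}\lesssim\sqrt{|\Im(z)|}\lesssim 1+|\Im(z)|$ and approximation error $|\Im(z)|^{-\alpha/2}\to 0$. The point is that $(1+|\Im(z)|)$ is exactly the size of $\|\alpha_z\|_{C^1}$ that the machinery of Theorem \ref{spectralgapmaintheorem} is already built to absorb (via the compensating choice $\delta\sim 1/((1+|\Im(z)|)\|\rho\|)$), so the constants $D,r_0$ for the auxiliary operators $\hat{\mathcal L}_{z,\rho}$ are uniform in $\Im(z)$. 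Since $\hat{\mathcal L}^n_{z,\rho}\varphi\to\mathcal L^n_{\Re(z),\rho}\varphi$ in $L^2(\nu^u)$ as $\Im(z)\to\infty$ for each fixed $n$, the bound simply passes to the limit; no competition between error and contraction ever arises. This is also the balancing Dolgopyat actually performs in \cite{dolgopyatmixing}: the mollification scale is $b$-dependent, not fixed.

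Two smaller issues: the identity $(\mathcal L^n_{z,\rho}\varphi)(u)=e^{E_n(u)}(\mathcal L^{(\eta),n}_{z,\rho}\varphi)(u)$ is not correct, since the weight difference $\alpha^{(n)}_z-\hat\alpha^{(n)}_z$ depends on the preimage $u'$ rather than on $u$, so the factor cannot be pulled outside the sum over preimages; one must compare via an operator-norm perturbation, which reintroduces the accumulating error $n\eta^\alpha$. And the corollary as stated concerns $\mathcal L^n_{\Re(z),\rho}$ (real parameter) rather than $\mathcal L^n_{z,\rho}$; the auxiliary imaginary part in the paper's proof plays only the role of a regularisation index, which your fixed-$\eta$ framing does not reproduce.
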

\begin{proof}
	Suppose that $\varsigma$ is H\"older continuous. As in \cite{dolgopyatmixing}, we can find a sequence of smooth potentials $\varsigma^{(\Im(z))} \in C^1(U, \bb R)$ indexed by $\Im(z)$ for which we have
	\begin{equation}
		\sup_{u \in U} \left| \varsigma_0(u) - \varsigma^{(\Im(z))}(u) \right| \leq \|\varsigma_0\|_{C^\alpha} \left( \frac 1 {|\Im(z)|} \right)^{\frac \alpha 2}
		\label{importantineq2}
	\end{equation}
	and
	\begin{equation}
		\left\|\varsigma^{(\Im(z))}\right\|_{C^1} < C\sqrt{|\Im(z)|}
		\label{importantineq}
	\end{equation}
	for some uniform constant $C > 0$. We consider the alternatively-defined transfer operator
	\[
		\left(\hat {\mathcal L}_{z,\rho} \varphi \right)(u) \coloneqq \smash{\sum_{\sigma(u')=u}} e^{\hat \alpha_z(u')} (\rho(\Hol(u'))\cdot \varphi(u'))
	\]
	where we set
	\[
		\hat {\alpha}_z(u) \coloneqq \varsigma^{(\Im(z))}(u) - \Re(z) \cdot \tau(u,s) - \log\left(\varphi_{\varsigma^{(x)}}(u)\right) + \log\left(\varphi_{\varsigma^{(x)}}(\sigma(u))\right) - \log P(\varsigma^{(x)})
	\]
	for $u \in U$. By \eqref{importantineq2}, $\hat {\alpha}_z$ converges uniformly to $\alpha_{\Re(z)}$ as $\Im(z) \to \infty$, and hence $\hat {\mathcal L}_{z,\rho}\varphi$ must converge to $\mathcal L_{\Re(z),\rho}\varphi$ in the $L^2(\nu^u)$ norm. 

	Now, we simply observe that the spectral bound in Theorem \ref{spectralgapmaintheorem} holds for the operator $\hat {\mathcal L}_{z,\rho}$, since the main properties required of $\hat \alpha_z$ -- namely that $\|\hat \alpha_z\|_{C^1} \leq C(1+|\Im(z)|)$ for large $|\Im(z)|$ when $|\Re(z) - P(\varsigma)| \leq 1$ -- are guaranteed by \eqref{importantineq} and \cite[Lemma 1]{dolgopyatmixing}. Moreover, note that all the constants, and particularly those originating from Proposition \ref{uniformc}, can be chosen uniformly in $\Im(z)$. Since the inequality in Theorem \ref{spectralgapmaintheorem} holds with the same constants for $\hat {\mathcal L}_{z,\rho}$ for each $\Im(z)$, it must hold in the limit, as desired.
\end{proof}
\printbibliography
\end{document}